\newtheorem{theo}{Theorem}[section]
\newtheorem*{theo*}{Theorem} 
\newtheorem{lemm}{Lemma}
\newtheorem{assu}{Assumption}
\newtheorem{conj}{Conjecture}
\newtheorem*{conj*}{Conjecture}
\newtheorem{coro}{Corollary}
\newtheorem{prop}[theo]{Proposition}
\newtheorem{rema}{Remark}
\theoremstyle{definition} \newtheorem{exam}{Example} \newtheorem*{defi}{Definition}
\newcommand{\CC}{\mathbb{C}}
\newcommand{\PP}{\mathbb{P}}
\newcommand{\ZZ}{\mathbb{Z}}
\newcommand{\cI}{\mathcal{I}}
\newcommand{\cO}{\mathcal{O}}
\newcommand{\cL}{\mathcal{L}}
\newcommand{\cT}{\mathcal{T}}
\newcommand{\cZ}{\mathcal{Z}}
\newcommand{\cJ}{\mathcal{J}}
\newcommand{\cF}{\mathcal{F}}
\newcommand{\IC}{\mathcal{IC}}
\newcommand{\sm}{{Sm}}
\newcommand{\rk}{\operatorname{rk}}
\newcommand{\ind}{\mathds{1}}
\title{Geometric Invariants of Recursive Group Orbit Stratification}
\author{Xiping Zhang}
\email{xzhmath@gmail.com}
\address{SCMS, Fudan University, Shanghai,  China}
\thanks{MSC Classification: 14C17, 14J17, 32S05, 55S35}
\date{05/07/2021}
\begin{document}
\begin{abstract}
The local Euler obstructions and the Euler characteristics of  linear sections with all hyperplanes on a stratified projective variety  are   key geometric  invariants in the study of singularity theory. Despite their importance, in general it is very hard to compute them.
In this paper we consider a special type of singularity: the 
recursive group orbits. They are the group orbits of
a sequence of $G_n$ representations $V_n$ satisfying certain assumptions. 
We introduce a new intrinsic invariant called the $c_{sm}$ invariant, and use it to give explicit formulas to the local Euler obstructions and the  sectional Euler characteristics of such orbits. In particular, the matrix rank loci are examples of recursive group orbits. Thus as applications, we explicitly compute these geometry invariants for ordinary, skew-symmetric and symmetric rank loci.  Our method is systematic and algebraic, thus works for algebraically closed field of characteristic $0$. Moreover, in the complex setting we also compute the stalk Euler characteristics of the Intersection Cohomology Sheaf complexes for all three types of rank loci. 
\end{abstract}
\maketitle

\section{Introduction}
The  geometric invariants on singular varieties have been an important subject for us to understand the singularities. In 1973 MacPherson introduced a local measurement for complex varieties (later published as \cite{MAC}), and named it the  local Euler obstruction.  Defined as the obstruction to extend the distance $1$-form  after lifting to the Nash transform; 
it is the key ingredient in MacPherson's  proof of the existence and uniqueness of Chern class on singular spaces. An equivalent definition was given by Brasselet and Schwartz in \cite{B-S81} using vector fields.
Later Gonz\'{a}lez-Sprinberg showed in \cite{Gonzalez} that there is an algebraic formula for the local Euler obstruction function, thus this definition extends to arbitrary algebraically closed field. 
For more about local Euler obstructions we refer to  \cite{B-NG} and \cite{B-NG2}.

In the same year M. Kashiwara published his paper introducing the famous index theorem for holonomic D-modules \cite{Kashiwara73}. In the paper he defined certain local topological invariants,  as the weighted sum of the Euler characteristics of certain link spaces. He named them local characteristics. Although the two definitions were defined by two different flavors, and were introduced in two different branches of mathematics, it was proved in \cite{BDK} that surprisingly the two definitions are equivalent. As the key ingredients in both singularity theory and Kashiwara's index theorem, the local Euler obstructions of stratified spaces have been intensively studied in many different fields. It is one of the most important invariants in singularity theory. 

However, it is very hard to compute the local Euler obstructions in general. Many authors have been working on formulas that make the computation easier. For example, Gonz\'{a}lez-Sprinberg's formula allows one to use intersection theory method;  the formula in \cite{Le-Tessier81} reduced the computation to the knowledge of local polar multiplicities. In \cite{BLS} the authors provided a recursive formula using the Euler characteristics of the link spaces of strata.
 Recently in \cite{Rod-Wang2} the authors proved the relation between the local Euler obstructions and the maximal likelihood degree, and proposed an algorithm to compute local Euler obstructions by computer. 
Despite the difficulty, in many cases we have known the Euler obstructions very well. For Schubert cells in Grassmannnians they were computed in \cite{BFL}\cite{Jones}; for skew-symmetric and ordinary rank loci they were computed in \cite{R-Prom} and \cite{NG-TG}\cite{Xiping2}.  Recently in \cite{M-R20} the authors provide an algorithm  for local Euler obstructions of Schubert varieties in all cominuscule spaces. Based on the examples computed  Mihalcea and  Singh conjectured \cite[Conjecture 10.2]{M-R20} the non-negativity of the local Euler obstructions. 

Another fundamental geometric invariant is the Euler characteristic. Over $\CC$ it's simply the topological Euler characteristic, over arbitrary field of characteristic $0$ it can be defined via the integration of MacPherson's Chern class. When $X$ is a projective variety, we can access more refined invariants by consider linear sections, i.e., the Euler characteristic of  $X$ intersects with a given hyperplane $H$. We call them sectional Euler characteristics. When $H$ is generic, such invariants can be obtained from MacPherson's Chern class (Cf. \cite{Aluffi1}).
However, for `special' linear sections, the geometry of the intersection is determined by both the singularity of $X$ and the position of $H$, and concrete formulas are unknown in general. Also, it's quite subtle to determine when a hypersurface is generic. 
These sectional Euler characteristics are very important invariants of $X$. For example, 
when $X$ is a hypersurface, the generic sectional Euler characteristics of the complement correspond to another fundamental invariant of $X$:  the Milnor numbers of $X$ (Cf. \cite{Huh12}\cite{DP03}).  Also, from computational algebraic geometry perspective, many algorithms involves the step of `choose a generic hyperplane section', where one needs to compute the invariants on a generic linear section. For example, in \cite{Rod-Wang2} the authors prove a formula of local Euler obstructions in terms of the Maximal likelihood degrees of generic linear sections.
As  the case of local Euler obstructions, the sectional Euler characteristics are important but  very hard to compute in general .

In this paper we consider a special type of singularities: the recursive group orbits over algebraically closed field of characteristic $0$. By recursive group orbits we mean a sequence of group actions $G_n\to GL(V_n)$ satisfying certain conditions (Assumption~\ref{assum; orbits}). For such group orbits, we define an intrinsic invariant called the $c_{sm}$ invariants, using the Chern classes of the projectivized orbits.  
They are 
discussed  in \S\ref{S; EulerGeneral}. The main result in \S\ref{S; EulerGeneral} is Theorem~\ref{theo; EulerObstruction}:
\begin{theo*} 
Assume that we have sequence of group actions as in Assumption~\ref{assum; orbits}. For any $1\leq k\leq n-1$ we denote $S_{n,k}=\PP(\cO_{n,k})\subset \PP(V_n)$ to be the projectivizations. Denote the $c_{sm}$ invariants of $\cO_{n,k}$  by $\sm_{n,i}$.
We have the following formula for the local Euler obstructions :
\[
Eu_{\bar{\cO}_{n,k}}(\cO_{n,r})=Eu_{\cO_{r,k}}(0)=\sum_{(\mu_1>\mu_2>\cdots >\mu_l\geq k)} \sm_{n\mu_1}\cdot \sm_{\mu_1\mu_2}\cdots\cdot \sm_{\mu_l k}
\]
Here the sum is over all   (partial and full) flags $(\mu_1>\mu_2>\cdots >\mu_l)$ such that $\mu_i-1=\mu_{i+1}$ for every $i$ and $\mu_l\geq k$.
\end{theo*}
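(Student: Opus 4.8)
The plan is to prove the two equalities separately. For the identification $Eu_{\bar\cO_{n,k}}(\cO_{n,r})=Eu_{\cO_{r,k}}(0)$ I would use the recursive normal form built into Assumption~\ref{assum; orbits}: along the orbit $\cO_{n,r}\subseteq\bar\cO_{n,k}$ the local structure of $\bar\cO_{n,k}$ is that of a product of a smooth variety with the affine cone over $S_{r,k}=\PP(\cO_{r,k})$, i.e. over $\bar\cO_{r,k}$ itself. Since the local Euler obstruction is a local invariant, is multiplicative under products with a smooth factor, and is constant on the $G_n$-orbit $\cO_{n,r}$, its value there equals the obstruction of $\bar\cO_{r,k}$ at the cone point $0$; this is exactly what the symbol $Eu_{\cO_{r,k}}(0)$ abbreviates. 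Once the product/cone normal form of the Assumption is in hand this step is bookkeeping.

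The substance is the second equality, which I would obtain by turning the cone structure of $\bar\cO_{r,k}$ into a recursion for $e_{r,k}:=Eu_{\cO_{r,k}}(0)$, with base value $e_{k,k}=1$, and then solving it. Because we work over an arbitrary algebraically closed field of characteristic $0$, I would compute $e_{r,k}$ through the algebraic (Gonz\'{a}lez-Sprinberg) description of $Eu$ --- equivalently through MacPherson's transformation $c_*$ and the identity $c_*(Eu_{\bar\cO_{r,k}})=c_M(\bar\cO_{r,k})$ --- rather than a link-theoretic topological formula. The vertex value of a cone is determined by the Chern-class and generic-sectional data of its projective base $S_{r,k}$ together with the Euler obstructions of the smaller strata contained in it; feeding in the additivity of $c_{SM}$ along the stratification $\bar\cO_{r,k}=\bigsqcup_{j}\cO_{r,j}$ and extracting the coefficients of the projectivized $c_{SM}$-classes in the $[\PP^i]$-basis --- which by definition are the invariants $\sm_{r,i}$ --- yields a one-step relation of the form
\[
e_{r,k}\;=\;\sm_{r,k}\;+\;\sum_{j}\sm_{r,j}\,e_{j,k},
\]
where $j$ runs over the explicit boundary indices (strictly below $r$) singled out by the Assumption, and where, by the first equality applied with $r$ in place of $n$, the coefficient attached to the boundary orbit $\cO_{r,j}$ is again $e_{j,k}=Eu_{\bar\cO_{j,k}}(0)$. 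This admissible index set is precisely what the adjacency condition on the flags encodes.

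To finish, I would solve the recursion by induction on $r-k$. Unfolding $e_{r,k}=\sm_{r,k}+\sum_j\sm_{r,j}e_{j,k}$ --- replacing each $e_{j,k}$ by its own expansion and terminating at $e_{k,k}=1$ --- writes $e_{r,k}$ as a sum of monomials indexed by the descending chains $r>\mu_1>\mu_2>\cdots>\mu_l\ge k$ drawn from the admissible set, the chain $(\mu_1,\dots,\mu_l)$ contributing $\sm_{r\mu_1}\sm_{\mu_1\mu_2}\cdots\sm_{\mu_l k}$. Collecting these is exactly the flag sum in the statement. One can also phrase the whole computation as the M\"obius inversion, over the poset of orbits, of the $c_{SM}$-additivity relation against the change of basis between the constructible functions $\{\ind_{\cO_{n,j}}\}$ and $\{Eu_{\bar\cO_{n,j}}\}$.

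The main obstacle is the middle step: pinning down the one-step recursion, and in particular matching the multiplicities that occur in it with the intrinsically defined $\sm$-invariants. Concretely this needs (i) a cone-vertex formula for the local Euler obstruction valid in the algebraic setting, expressing $Eu_{\bar\cO_{r,k}}(0)$ through the pushforward of $c_{SM}(\bar\cO_{r,k})$ (equivalently $c_M$) to $\PP(V_r)$ and through generic sectional Euler characteristics of $S_{r,k}$; and (ii) a careful analysis, via Assumption~\ref{assum; orbits}, of how $\bar\cO_{r,k}$ meets a generic hyperplane and the hyperplane at infinity, showing that the only boundary contributions are the $S_{j,k}$, with the coefficients and the range of $j$ as stated. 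Once the recursion is correct with the right indices, the combinatorial unfolding in the previous paragraph is routine.
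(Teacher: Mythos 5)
Your proposal is correct and follows essentially the same route as the paper: the first equality via the normal-slice/product property of $Eu$, and the second via the cone-vertex formula $Eu_{C_X}(0)=(-1)^{\dim V-1}c_M^X(-1)$ (the paper's Proposition~\ref{prop; MatherObstruction}, quoted from Aluffi) combined with the expansion $c_M^{\bar S_{n,k}}=\sum_i Eu_{\bar S_{n,k}}(S_{n,i})\,c_{sm}^{S_{n,i}}$, which yields exactly your one-step recursion $e_{k,n}=\sum_{i=k}^{n-1}\sm_{n,i}\,e_{k,i}$ with $e_{k,k}=1$, then unfolded into the flag sum. The "main obstacle" you flag is already discharged by that proposition together with the definition $\sm_{n,i}=(-1)^{l_n-1}c_{sm}^{S_{n,i}}(-1)$, so no extra analysis of hyperplane sections is needed.
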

The formula  requires only the knowledge of our $c_{sm}$ invariants, which in many cases can be directly computed.  Our result is purely algebraic and works for general field $k$. In particular, when the base field is $\CC$,  we prove that our $c_{sm}$ invariants agree with Kashiwara's local Euler characteristics, i.e., 
the Euler characteristic of the complex link spaces of the orbits to $0$. We prove an algebraic version of \cite[Theorem 0.2]{Sch02} using our $c_{sm}$ invariants, which specialize to a Brasselet-Lê-Seade type formula (Proposition~\ref{prop; generalizedBLS}) .  
%This theorem can be viewed as an algebraic version of the definition given in \cite{Dubson}.

In \S\ref{S; SecEulerGeneral} we discuss the sectional Euler characteristics of group orbits satisfy Assumption~\ref{assum; orbits2}. The main result is Theorem~\ref{theo; sectionalEuler}, in which we
prove a formula to the Euler characteristics of such orbits with any given hyperplane in terms of their Euler obstructions:
\begin{theo*}
With the Assumption~\ref{assum; orbits2}. Let $e_{n,k}$ and $e'_{n,k}$ be their local Euler obstructions of the pair of orbits and their dual orbits.
Let $L_r\in S'_r$ be the corresponding hyperplane, and
let $A=[\alpha_{i,j}]_{n-1\times n-1}$ be the inverse matrix of the matrix $E=[e_{i,j}]_{n-1\times n-1}$ of local Euler obstructions.  Then we have
\begin{align*}
\chi_{S_k\cap L_r}=&
\sum_{i=r}^{n-1}\alpha_{k,i}\cdot (-1)^{d_i+d'_{n-i}+N} e'_{n-i,r}+\chi(S_k)-\sm(S_k)  \/.
\end{align*}
\end{theo*}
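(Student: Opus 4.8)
\emph{Strategy.} The plan is to rewrite the indicator function $\ind_{S_k}$ as a $\ZZ$-combination of local Euler obstruction functions, apply the ``hyperplane-section-and-count'' operation term by term, and identify the resulting correction terms as Euler obstructions of the dual stratification via projective biduality. Since each $Eu_{\bar S_i}$ is supported on $\bar S_i$, equals $1$ on the smooth orbit $S_i$, and equals $e_{i,j}$ on $S_j$, the matrix $E=[e_{i,j}]$ is unipotent and triangular for the closure order on the orbits, hence invertible over $\ZZ$ with inverse $A=[\alpha_{i,j}]$; inverting $Eu_{\bar S_i}=\sum_j e_{i,j}\ind_{S_j}$ gives the identity of constructible functions on $\PP(V_n)=\PP^N$
\[
\ind_{S_k}=\sum_i \alpha_{k,i}\,Eu_{\bar S_i}.
\]
Restricting to any hyperplane $H$ and using additivity of the Euler characteristic on constructible functions gives $\chi(S_k\cap H)=\sum_i \alpha_{k,i}\,\chi(Eu_{\bar S_i}|_{H})$; taking $H=L_r$ this is the quantity we want, while taking $H$ generic the left side is the generic sectional Euler characteristic, which by the $c_{sm}$ computation is $\chi(S_k)-\sm(S_k)$. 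So the task reduces to computing, for each $i$, the defect $\chi(Eu_{\bar S_i}|_{L_r})-\chi(Eu_{\bar S_i}|_{H})$ between the special and the generic section.

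\emph{The local computation.} Let $R$ denote the operation sending a constructible function $\psi$ on $\PP^N$ to the function $\xi\mapsto\chi(\psi|_{H_\xi})$ on the dual space $\PP(V_n^*)=\check{\PP}^N$, so that $\chi(Eu_{\bar S_i}|_{L_r})=R(Eu_{\bar S_i})(L_r)$. The characteristic cycle of the Euler obstruction function of the irreducible variety $\bar S_i$ of dimension $d_i$ is, up to the sign $(-1)^{d_i}$, its conormal cycle $\mathrm{Con}(\bar S_i)\subset T^*\PP^N$; the contact transform underlying $R$ carries $\mathrm{Con}(\bar S_i)$ to $\mathrm{Con}(\bar S_i^{\vee})$ by projective biduality, and by the self-duality built into Assumption~\ref{assum; orbits2} the projective dual $\bar S_i^{\vee}$ is the closure $\bar S'_{n-i}$ of the corresponding dual orbit. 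Hence $R(Eu_{\bar S_i})$ and $(-1)^{d_i+d'_{n-i}+N}Eu_{\bar S'_{n-i}}$ have characteristic cycles differing only by a multiple of the zero section; since a constructible function is determined by its characteristic cycle, the two differ by a constant multiple of $\ind_{\check{\PP}^N}$, and evaluating at a generic $\xi$ identifies that constant with $\chi(Eu_{\bar S_i}|_{H})$. Evaluating instead at $\xi=L_r\in S'_r$ yields
\[
\chi\bigl(Eu_{\bar S_i}|_{L_r}\bigr)-\chi\bigl(Eu_{\bar S_i}|_{H}\bigr)=(-1)^{d_i+d'_{n-i}+N}\,Eu_{\bar S'_{n-i}}(S'_r)=(-1)^{d_i+d'_{n-i}+N}\,e'_{n-i,r},
\]
which vanishes unless $S'_r\subseteq\bar S'_{n-i}$; the contributing indices are precisely those in the range $r\le i\le n-1$ of the statement. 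Plugging this into $\chi_{S_k\cap L_r}=\sum_i\alpha_{k,i}\,\chi(Eu_{\bar S_i}|_{L_r})$, the generic parts assemble to $\chi(S_k)-\sm(S_k)$ and the defects to $\sum_{i=r}^{n-1}\alpha_{k,i}(-1)^{d_i+d'_{n-i}+N}e'_{n-i,r}$, which is the asserted formula. The algebraic, characteristic-$0$ substitute for the identity ``$R(Eu_X)=\pm Eu_{X^{\vee}}+\text{const}$'' used here is exactly the algebraic analogue of \cite[Theorem 0.2]{Sch02}, specializing to the generalized Brasselet--Lê--Seade statement (Proposition~\ref{prop; generalizedBLS}) established in \S\ref{S; EulerGeneral} in terms of the $c_{sm}$ invariants.

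\emph{Main obstacle.} The delicate part is the second step, and it has three aspects. First, one must verify that Assumption~\ref{assum; orbits2} genuinely forces $\bar S_i^{\vee}=\bar S'_{n-i}$ as \emph{stratified} varieties, so that a single dual stratification of $\PP(V_n^*)$ is simultaneously adapted to all of the conormal cycles $\mathrm{Con}(\bar S_i)$. Second, the conormal/contact-transform argument has to be run purely algebraically in characteristic $0$ --- this is where the $c_{sm}$ formalism of \S\ref{S; EulerGeneral} replaces complex links and vanishing cycles --- while keeping the Verdier-duality sign bookkeeping straight, since it is this bookkeeping that produces the exponent $d_i+d'_{n-i}+N$. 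Third, one must confirm that the value of $R(Eu_{\bar S_i})$ at the particular hyperplane $L_r$ agrees with its value at a general point of $S'_r$, which amounts to constructibility of $R(Eu_{\bar S_i})$ with respect to the dual stratification. Once these three points are secured, the rest is the linear-algebra bookkeeping displayed above.
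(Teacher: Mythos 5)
Your proof is correct and follows essentially the same route as the paper: both rest on the unipotent triangularity of $E=[e_{i,j}]$, on Ernstr\"om's Radon-transform identity $(Eu_{\bar S_i})^\vee=(-1)^{d_i+d'_{n-i}+N}Eu_{\bar S'_{n-i}}+e_{\bar S_i}\ind$ together with the labeling $\bar S_i^\vee=\bar S'_{n-i}$ from Assumption~\ref{assum; orbits2}, and on identifying the generic value with $\chi(S_k)-\sm(S_k)$ via Proposition~\ref{prop; csm(-1)}. The only differences are organizational --- you invert the Euler-obstruction basis at the level of constructible functions before applying the Radon transform, while the paper applies the transform to each $Eu_{\bar S_k}$ first and inverts the resulting linear system, and your ``local computation'' re-derives via conormal cycles what the paper simply cites as Theorem~\ref{theo: Radon} --- so the content is the same (including the imprecision about the summation range, which is inherited from the statement itself: the terms that actually survive are those with $i\geq\max(k,\,n-r)$, the rest vanishing because $\alpha_{k,i}=0$ or $e'_{n-i,r}=0$).
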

We also give explicit description on when a hypersurface is generic with respect to sectional Euler characteristic.  

In particular, for reflective recursive group orbits (group orbits satisfy Assumption~\ref{assum; orbits} and Assumption~\ref{assu; reflective}) we prove that, the local Euler obstructions of such orbits are equivalent to their sectional Euler characteristics, i.e., they can be deduced from each other. Thus for such group orbits we have proved that the three sets of local invariants-the local 
Euler obstructions, the sectional Euler characteristics and the $c_{sm}$ invariants-are equivalent.

As application we apply the formulas to a special type of reflective recursive group orbits: the ordianry, skew-symmetric and symmetric matrix rank loci. In \S\ref{S; rankloci} we briefly review the basic properties of the matrix rank loci, and in \S\ref{S; EulerRankLoci} we explicitly compute the local Euler obstructions for all three types rank loci. The  main results are Theorem~\ref{theo; OrdEuler}, Theorem~\ref{theo; SkewEuler} and Theorem~\ref{theo; SymEuler}.  When the base field is $\CC$, the formula is known for ordinary rank loci in \cite{NG-TG} and for skew-symmetric rank loci by \cite{Prom}, proved by concrete topological computation. It is not known for symmetric case. 
Our method, however, is systematic and algebraic: we translate the computation of the topological Euler characteristic of the link spaces into 
standard Schubert calculus: the computations of tautological  Chern classes over Grassmannians. Then we take advantage of the nice enumerative properties on Grassmannians. 

With the knowledge of local Euler obstructions, in  \S\ref{S; SecEulerRankLoci} we apply Theorem~\ref{theo; sectionalEuler} and compute the sectional Euler characteristics for the rank loci. For all three cases, we give explicit formulas to the Euler characteristics of the intersections of the rank loci with any given hyperplanes. The formulas in Theprem~\ref{theo; secEulerOrd}, Theorem~\ref{theo; secEulerskew} and Theorem~\ref{theo; sectionalSym} are summations of simple binomials, which can be easily carried out by hand.

The famous Kashiwara index theorem shows that the local Euler obstructions connect as a bridge from the microlocal multiplicities to the stalk Euler characteristics for constructible sheaf complexes. For stratified singular complex varieties we are particularly interested in the Intersection cohomology sheaf complexes: they are bounded complexes of constructible sheaves that reflect the limiting behaviors from strata to their boundaries.  As an application in 
Theorem~\ref{theo; stalkRankloci} we  compute the stalk Euler characteristic of the Intersection cohomology sheaves for all three types of rank loci. The computations indicate that, these topological invariant of ordinary rank loci behave very similarly to the skew-symmetric rank loci, but very differently from symmetric rank loci. For further discussions of this phenomenon we refer to  \cite{Braden} and \cite{BFL}.

As pointed in \cite{NG-TG}, for ordinary rank loci  the local Euler obstructions are, surprisingly, Newton binomials fitting into the Pascal triangle. The skew-symmetric rank loci behaves the same with ordinary case. 
For symmetric case this is almost true, except that the Euler obstructions are zero on even size but odd rank rank loci. For other cases the local Euler obstructions are also Newton binomials. 
When the base field is $\CC$, via MacPherson's definitions this vanishing phenomenon says that for such cases we can always extend the distance $1$-form on the Nash transform. It should be interesting to study from topology why such cases are special. 
Moreover, as singularities in the rank stratification  are the same as singularities of certain Schubert varieties in the Grassmannians (for symmetric and skew-symmetric case the Grassmannians should be chosen as Grassmannians of isotropic subspaces  with a symplectic or symmetric bilinear form), our results in \S\ref{S; EulerRankLoci}  support  the Conjecture 10.2 in \cite{M-R20}.

\section*{Acknowledgment}
The author is grateful to Leonardo Mihalcea, Richard Rim\'anyi, Fei Si, Changjian Su and Dingxin Zhang
for many useful discussions and suggestions.  The author also would like to thank the referees for all the comments. The author is supported by China Postdoctoral Science Foundation (Grant No.2019M661329).

\section{Preliminary}
\label{S; preliminary}
Unless specifically mentioned, all varieties in this note are  closed projective, and the base field $k$ is algebraically closed of characteristic $0$. 
\subsection{Characteristic Classes}
Let $X\subset \PP(V)$ be a closed subvariety. 
A constructible function on $X$ is a finite sum $\sum_W m_W \ind_W$ over closed subvarieties $W$ of $X$, where the coefficients $m_W$ are integers and  $\ind_W$ is the indicator function on $W$.
Let $F(X)$ be the addition group of constructible functions on $X$. It is isomorphic to the group of cycles $\cZ(X)$.

Let $f\colon X\to Y$ be a proper morphism. One defines a homomorphism $Ff\colon F(X)\to F(Y)$ by setting, for all $ p\in Y$, $
Ff(\ind_W)(p) := \chi(f^{-1}(p)\cap W)$,  
and extending this definition by linearity. This makes $F$ into 
a functor from the category of $k$-varieties $\mathcal{VAR}$ to $\mathcal{AB}$, the category of abelian groups. The following theorem was first proved in \cite{MAC} by MacPherson, to answer a conjecture proposed by Deligne and Grothendieck, then generalized to characteristic $0$ algebraically field by G. Kennedy in \cite{Kennedy}. 
\begin{theo}[MacPherson, Kennedy]
\label{theo; MACc_*}
Let $X$ be a projective variety.
There is a unique natural transformation $c_*$ from the functor
$F$ to the Chow functor $A$ such that if $X$ is smooth, then 
$c_*(\ind_X)=c(T_X)\cap [X]$, where $T_X$ 
is the tangent bundle of $X$.
\end{theo}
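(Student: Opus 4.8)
The plan is to prove uniqueness by a resolution-of-singularities argument, to prove existence by constructing $c_*$ on the basis of local Euler obstruction functions via Chern--Mather classes, and to reduce naturality to a single geometric statement about how the Nash bundle degenerates under proper pushforward; that last reduction is where essentially all of the work sits.

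\textbf{Uniqueness.} First I would observe that $F(X)$ is generated as an abelian group by functions of the form $F\pi(\ind_{\widetilde W})$, where $W\subseteq X$ is a closed subvariety and $\pi\colon\widetilde W\to W\hookrightarrow X$ is a proper morphism from a smooth variety. Indeed, since the base field has characteristic $0$, Hironaka resolution provides a resolution $\pi\colon\widetilde W\to W$, and then $F\pi(\ind_{\widetilde W})=\ind_W+\psi$ with $\psi$ supported on the lower-dimensional singular locus of $W$; descending induction on $\dim W$ gives the claim. For such a generator, naturality together with the normalization on smooth varieties forces
\[
c_*\bigl(F\pi(\ind_{\widetilde W})\bigr)=\pi_*\bigl(c_*(\ind_{\widetilde W})\bigr)=\pi_*\bigl(c(T_{\widetilde W})\cap[\widetilde W]\bigr),
\]
which is independent of all choices, so extending $\mathbb Z$-linearly shows $c_*$ is unique if it exists.

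\textbf{Existence.} For existence I would use the Nash transform $\nu\colon\widehat X\to X$ with its tautological (Nash) bundle $\widehat T_X$, and for each closed subvariety $W\subseteq X$ set the Chern--Mather class $c_{\mathrm M}(W):=\nu_*\bigl(c(\widehat T_W)\cap[\widehat W]\bigr)\in A_*(X)$. Next I would invoke the algebraic definition of the local Euler obstruction function $Eu_W$ (González-Sprinberg \cite{Gonzalez}, valid over any characteristic-$0$ algebraically closed field) together with the combinatorial fact that, relative to a stratification, the functions $\{Eu_{\bar W_\alpha}\}$ are unitriangular against $\{\ind_{\bar W_\alpha}\}$ and hence form a $\mathbb Z$-basis of $F(X)$. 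Then I would \emph{define} $c_*$ on this basis by $c_*(Eu_{\bar W}):=c_{\mathrm M}(\bar W)$ and extend $\mathbb Z$-linearly. When $X$ is smooth the Nash transform is an isomorphism, $\widehat T_X=T_X$, and $Eu_X=\ind_X$, so $c_*(\ind_X)=c(T_X)\cap[X]$ and the normalization holds automatically.

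\textbf{Naturality, the main obstacle.} It remains to verify $f_*\circ c_*=c_*\circ Ff$ for every proper $f\colon X\to Y$. Using the basis above this reduces to the identity $f_*\bigl(c_{\mathrm M}(\bar W)\bigr)=c_*\bigl(Ff(Eu_{\bar W})\bigr)$ for each closed subvariety $W\subseteq X$, and then, by factoring $f$ through its graph as a closed embedding followed by a projection, to the cases of a closed embedding (immediate, since the Nash transform is compatible with closed embeddings) and of a proper projection. The projection case is the crux: one must track how $\widehat T_W$ and the cycle $c(\widehat T_W)\cap[\widehat W]$ behave under pushforward. MacPherson's graph construction realizes the pushed-forward cycle as a limit cycle whose components split off exactly the Chern--Mather class of the image plus correction terms that reassemble into $Ff(Eu_W)$; over a general characteristic-$0$ field the topological limit is unavailable and must be replaced by a flat limit in a relative Nash/Chow parameter space, which is precisely Kennedy's contribution in \cite{Kennedy}, after which the two sides of the identity are read off the flat specialization. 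I expect this flat-degeneration bookkeeping to be the technical heart of the proof; everything else is formal. As a consistency check, pushing to $\Spec k$ recovers $\int_X c_*(\ind_X)=\chi(X)$, and over $\CC$ the whole construction specializes to MacPherson's original one \cite{MAC}.
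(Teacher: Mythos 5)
Your proposal is correct and follows essentially the same route the paper outlines: define $c_*$ on the basis $\{Eu_{\bar W}\}$ of $F(X)$ by pushing forward Chern--Mather classes from the Nash transform, get the normalization for free on smooth $X$, prove uniqueness by resolution of singularities, and defer the naturality verification to MacPherson's graph construction and Kennedy's characteristic-$0$ extension, exactly as the paper does by citing \cite{MAC} and \cite{Kennedy}. Your sketch is in fact somewhat more detailed than the paper's four-step summary, and correctly identifies where the real technical work lies.
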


The  proof of the theorem may be summarized as the following steps.
\begin{enumerate}
\item For any closed projective variety $X$, there is a local measurement of the singularity called local Euler obstruction. This is 
a constructible function on $X$ denoted by $Eu_X$, i.e., $Eu_X=\sum_{W} e_W \ind_W$ for some sub-varieties $W$ of $X$.   
\item For any $X$, the Euler obstruction functions along all closed subvarieties $\{Eu_W \}$ form a basis for $F(X)$. 
\item For any closed subvariety $W$, there is an assigned characteristic class $c_M(W)\in A_*(W)$ defined via Nash blowup, called Chern-Mather class. 
\item Let $i\colon W\to X$ be the closed embedding. Define $c_*(Eu_W)=i_*(c_M(W))$ to be the pushforward of the Chern-Mather class of $W$ in $A_*(X)$. This is the unique natural transformation that matches the desired normalization property. 
\end{enumerate}

The Euler obstruction was originally defined over $\CC$ via obstruction theory to extend the lift of a non-zero vector field to the Nash transform, we refer \cite{MAC} for details. In \cite{Gonzalez} the author  gave an algebraic formula  works for  arbitrary algebraically closed field. 
Here we  use the formula in \cite{Gonzalez} as our definition. Let $X\subset M$ be a closed subvariety of dimension $d$, we define the Nash transform of $X$ to be 
\[
\hat{X}:=\text{closure of }\{(x,T_x X )|x \text{ is a smooth point} \}\subset G_d(TM) 
\]
The projection map $p\colon \hat{X}\to X$ is birational and is isomorphic over the smooth locus $X_{sm}$. The universal subbundle $S$ of $G_d(TM)$ restricts to a rank $d$ vector bundle on $\hat{X}$, denoted by $\cT_X$. 
\begin{defi}
The local Euler obstruction of $X$ is defined as follows: for any $x\in X$ we define
\[
Eu_X(x):=\int_{p^{-1}(x)} c(\cT_X)\cap s(p^{-1}(x), \hat{X}) \/.
\]
The Chern-Mather class of $X$ is defined as
\[
c_M(X):= p_*(c(\cT_X)\cap [\hat{X}]) \/.
\]
The Chern-Schwartz-MacPherson class of $X$ is defined as $c_{sm}(X):=c_*(\ind_X)$. Thus we have
\[
c_{sm}(X)=\sum_{k} a_k c_M(W_k)
\]
providing that $\ind_X=\sum_k a_k Eu_{W_k}$.
\end{defi}

\begin{prop}
We have the following properties.
\begin{enumerate}
\item  $Eu_X(x)$ is a local invariant, thus only depends on an open neighborhood of $x$ in $X$.
\item  If $x\notin X$, then $Eu_X(x)=0$.
\item  If $x\in X$ is a smooth point, then $Eu_X(x)=1$. But notice that $Eu_X(x)=1$ does NOT imply that $x$ is smooth (Cf. \cite{Nodland} and \cite{M-R20}, and Theorem~\ref{theo; SymEuler}).
\item  The Euler obstruction has the product property, i.e., $Eu_{X\times Y}(x\times y)=Eu_{X}(x)\times Eu_Y(y)$.
\end{enumerate}
\end{prop}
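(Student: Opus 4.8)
The plan is to verify each of the four listed properties directly from the algebraic definition via the Nash transform, $Eu_X(x)=\int_{p^{-1}(x)} c(\cT_X)\cap s(p^{-1}(x),\hat X)$. Properties (1) and (2) are essentially formal; properties (3) and (4) require a small geometric argument. I will treat them in that order, since the formal facts can be invoked when discussing the harder cases.

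For (1), I would observe that the Nash transform $\hat X$, the projection $p$, the tautological bundle $\cT_X$, and the Segre class $s(p^{-1}(x),\hat X)$ are all local constructions: restricting $X$ to an open neighborhood $U$ of $x$ replaces $\hat X$ by $p^{-1}(U)$, $\cT_X$ by its restriction, and $p^{-1}(x)$ is unchanged, while the Segre class of a subscheme inside an ambient scheme only depends on an open neighborhood of that subscheme. Hence the integral is unchanged, giving locality. For (2), if $x\notin X$ then $p^{-1}(x)=\emptyset$, so the pushforward of anything supported on it is zero and the integral vanishes.

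For (3), let $x\in X_{sm}$. Then $p$ is an isomorphism over a neighborhood of $x$, so $p^{-1}(x)$ is a single reduced point, $s(p^{-1}(x),\hat X)=[\mathrm{pt}]$, and $c(\cT_X)$ restricted to this point contributes only its degree-zero part, which is $1$. Thus $Eu_X(x)=1$. The parenthetical warning that the converse fails is not part of the claim and can be dispatched by citing \cite{Nodland}, \cite{M-R20}, and Theorem~\ref{theo; SymEuler}. For (4), the product property, I would use that the Nash transform of a product is the product of the Nash transforms: $\widehat{X\times Y}=\hat X\times\hat Y$ inside $G_{d_X+d_Y}(TM\times TN)$, because the tangent space to $X\times Y$ at a smooth point $(x,y)$ splits as $T_xX\oplus T_yY$ and the closure of the smooth locus behaves multiplicatively. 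Under this identification $\cT_{X\times Y}=\cT_X\boxplus\cT_Y$, so $c(\cT_{X\times Y})=p_1^*c(\cT_X)\cdot p_2^*c(\cT_Y)$; moreover $p^{-1}(x\times y)=p_X^{-1}(x)\times p_Y^{-1}(y)$ and the Segre class is multiplicative for products of regular (or arbitrary) embeddings, $s\big(p_X^{-1}(x)\times p_Y^{-1}(y),\hat X\times\hat Y\big)=s(p_X^{-1}(x),\hat X)\times s(p_Y^{-1}(y),\hat Y)$. Combining these via the projection formula and Fubini for the integral over a product yields $Eu_{X\times Y}(x\times y)=Eu_X(x)\cdot Eu_Y(y)$.

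The main obstacle is the product property in (4): one must justify carefully that taking closures of the smooth loci commutes with products (so that $\widehat{X\times Y}=\hat X\times \hat Y$ as schemes, not just set-theoretically) and that the Segre class of a product of subschemes is the external product of the Segre classes. Both are standard — the first because the smooth locus of a product is the product of the smooth loci and closure commutes with products of varieties over an algebraically closed field, the second by the compatibility of the normal cone with flat base change / products — but they are the only points where genuine care is needed; the remaining properties are immediate from the definition.
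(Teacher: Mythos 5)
The paper states this proposition without proof --- it is presented as a list of standard properties of the local Euler obstruction, with citations only for the parenthetical remark in item (3) --- so there is no argument of the author's to compare against; your job was effectively to supply the missing verification, and what you wrote is correct. Items (1)--(3) are exactly as they should be from the Gonz\'alez-Sprinberg formula the paper adopts as its definition: locality of the Nash transform, of $\cT_X$, and of the Segre class gives (1); the empty fiber gives (2); and at a smooth point $p$ is an isomorphism near $x$, so $p^{-1}(x)$ is a reduced point with $s(p^{-1}(x),\hat X)=[\mathrm{pt}]$ and only the degree-zero part of $c(\cT_X)$ survives, giving (3). For item (4) you correctly isolate the two substantive inputs. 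First, $\widehat{X\times Y}=\hat X\times\hat Y$: this holds because $(X\times Y)_{sm}=X_{sm}\times Y_{sm}$ with $T_{(x,y)}(X\times Y)=T_xX\oplus T_yY$, the direct-sum map $G_{d_X}(TM)\times G_{d_Y}(TN)\to G_{d_X+d_Y}(T(M\times N))$ is a closed embedding, and over an algebraically closed field the closure of a product of dense subvarieties is the product of the closures (so the identification is as varieties, hence as schemes, since Nash transforms are reduced). Second, multiplicativity of Segre classes, $s(Z\times Z', Y\times Y')=s(Z,Y)\times s(Z',Y')$: this reduces to the fact that the normal cone of a product is the product of the normal cones (the associated graded of $I\otimes B + A\otimes J$ is the external tensor product of the associated gradeds, by exactness of $\otimes_k$) together with the product formula for Segre classes of cones (cf.\ \cite{INT}). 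With those two facts, $\cT_{X\times Y}=\cT_X\boxplus\cT_Y$ and the scheme-theoretic fiber over $(x,y)$ is the product of the fibers, so the integral factors as claimed. A virtue of your route, worth noting, is that it is purely algebraic and therefore valid over any algebraically closed field of characteristic $0$, which is the paper's standing hypothesis; the classical proofs of the product property over $\CC$ go through the obstruction-theoretic or vector-field definitions instead.
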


\begin{defi}
Let $X$ be a projective or affine variety. By a stratification on $X$ we mean a disjoint union $X=\cup_\alpha S_\alpha$ such that
\begin{enumerate}
\item Each $S_\alpha$ is a  smooth quasi-projective   variety.
\item For any $p$ and $q$ in $S_\alpha$, we have $Eu_{X}(p)=Eu_X(q)$.
\item If $S_\alpha\cap \bar{S}_\beta\neq \emptyset$, then we have $S_\alpha\subset \bar{S}_\beta$. 
\end{enumerate}
\end{defi}
\begin{exam}
 Let $X$ be a proejctive variety, we define $X_0=X_{sm}$ to be the smooth locus of $X$, and $\bar{X}_{k+1}$ to be the singularity of $\bar{X}_k$. This process will stop in finite steps, since $\bar{X}_{k+1}$ is closed in $\bar{X}_{k}$ and thus $\dim \bar{X}_{k+1}<\dim \bar{X}_k$. The strata $X_k:=\bar{X_k}\smallsetminus \bar{X}_{k+1}$ satisfy the above assumptions. We call this the  Singularity Stratification of $X$.
\end{exam}

\begin{rema}
Assuming that $X$ is complete. If we consider the constant map $k\colon X\to \{p\}$, then the covariance property of $c_*$
shows that
\begin{align*}
\int_X c_{sm}(Y)
=&~ \int_{\{p\}} Afc_*(\ind_Y)=\int_{\{p\}} c_*Ff(\ind_Y)\\
=&~ \int_{\{p\}} \chi(Y)c_*(\ind_{\{p\}})=\chi(Y). \qedhere
\end{align*}
This observation gives a generalization of the classical
Poincar\'e-Hopf Theorem to possibly singular varieties.
\end{rema} 

\subsection{Sectional Euler Characteristic}
In \cite{Aluffi1} Aluffi shows that, 
for projective varieties the generic sectional Euler characteristic can be obtained by studying their Chern-Schwzrtz-MacPherson classes. 
Let $X\subset \PP(V)$ be a projective variety of dimension $n$.  For any $r\geq 0$ we define 
\[
X_r=X\cap H_1\cap \cdots \cap H_r
\]
to be the intersection of $X$ with $r$ generic hyperplanes. Let $\chi(X_r)=\int_{X_r} c_{sm}(X_r)$ be its Euler characteristic, we define $\chi_X(t)=\sum_i \chi(X_r)\cdot (-t)^r$ to be the corresponding polynomial. This is a polynomial of degree $\leq n$. 

Also, notice that $A_*(\PP(V))=\ZZ[H]/H^{n+1}$ is a polynomial ring, thus we can write $i_*c_{sm}(X)=\sum_{i\geq 0} \gamma_{i} [\PP^i]=\sum_{i\geq 0} \gamma_{N-i} H^i$. 
Let $\gamma_X(t)=\sum_i \gamma_i t^i $ and $c_{sm}^X(t)=\sum_i \gamma_{N-i} t^i$ be the assigned polynomials respectively. They are polynomials of degree $\leq  n$. 

We have the following beautiful theorem connecting $c_{sm}$ class and sectional Euler chracteristics:
\begin{theo}[Aluffi]
\label{theo; EulerChern}
We consider the transformation $\cI_{SM} \colon \ZZ[t]^{\leq n}\to  \ZZ[t]^{\leq n}$ defined by
$$
P(t)\mapsto \cI_{SM}(P)(t):= \frac{t\cdot p(-t-1)+p(0)}{t+1} \/.
$$
This is a $\ZZ$-linear involution, i.e., $\cI_{SM}^2(P)=P$. Moreover, we have
$$
\cI_{SM}(\gamma_X(t))=\chi_X(t); \quad \cI_{SM}(\chi_X(t))=\gamma_X(t) \/.
$$
\end{theo}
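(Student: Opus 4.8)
The plan is to split the statement into a purely formal part about the operator $\cI_{SM}$ and a geometric part describing $c_*$ of a general linear section, then glue the two via a generating‑function identity. For the formal part, $\cI_{SM}$ is well defined on $\ZZ[t]^{\leq n}$ because substituting $t=-1$ into the numerator $t\,p(-t-1)+p(0)$ gives $-p(0)+p(0)=0$, so $(t+1)$ divides it in $\ZZ[t]$ and the quotient has degree $\leq n$ (the numerator has degree $\leq n+1$); $\ZZ$‑linearity in $p$ is immediate from the formula. For the involution I would evaluate $\cI_{SM}$ on two $\ZZ$‑bases of $\ZZ[t]^{\leq n}$: with $q_m(t):=\sum_{j=0}^{m}(-t)^j=\frac{1-(-t)^{m+1}}{1+t}$, one‑line computations give $\cI_{SM}\bigl((1+t)^m\bigr)=q_m$ and $\cI_{SM}(q_m)=(1+t)^m$. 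Since $\{(1+t)^m\}_{0\le m\le n}$ and $\{q_m\}_{0\le m\le n}$ are both degree‑triangular bases and $\cI_{SM}$ is linear and interchanges them, $\cI_{SM}^2=\mathrm{id}$.

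The geometric heart is a lemma I would isolate: for a constructible function $\varphi$ on $\PP(V)$ with $c_*(\varphi)=\sum_i\gamma_i[\PP^i]$ and a general hyperplane $H\subset\PP(V)$, one has $c_*(\varphi|_H)=\sum_i\gamma'_i[\PP^i]$ with $\sum_i\gamma'_it^i=D\bigl(\sum_i\gamma_it^i\bigr)$, where $D$ is the degree‑lowering operator $P(t)\mapsto\frac{P(t)-P(-1)}{t+1}$. I would prove this using the universal hyperplane $\mathcal{H}\subset\PP(V)\times\PP(V^*)$ with its two projections, $p$ (a $\PP^{N-1}$‑bundle, $N=\dim\PP(V)$) and $q$ (whose fibers are the hyperplane sections): Verdier–Riemann–Roch for the smooth morphism $p$ gives $c_*(p^*\varphi)=c(T_p)\cap p^*c_*(\varphi)$, and for general $[H]$ the fiber $q^{-1}([H])\cong H$ is transverse to a stratification adapted to $p^*\varphi$, so $c_*(\varphi|_H)$ is the refined Gysin restriction of $c_*(p^*\varphi)$ along $q^{-1}([H])\hookrightarrow\mathcal{H}$ (the normal bundle of a $q$‑fiber being trivial, there is no twist). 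In particular the right‑hand side is a fixed $\ZZ$‑linear function of the class $c_*(\varphi)\in A_*\PP(V)$, hence of the polynomial $\gamma_\varphi$, so $\gamma_\varphi\mapsto\gamma_{\varphi|_H}$ is one universal linear map on $\ZZ[t]^{\leq n}$ and it suffices to pin it down on the basis $\varphi=\ind_{\PP^m}$. Since $\ind_{\PP^m}|_H=\ind_{\PP^{m-1}}$ for general $H$ and the $\gamma$‑polynomial of $\ind_{\PP^m}$ is $\frac{(1+t)^{m+1}-1}{t}$, the operation sends $\frac{(1+t)^{m+1}-1}{t}\mapsto\frac{(1+t)^m-1}{t}$, and a short computation identifies this with $D$.

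To assemble the proof, take a general complete flag $L_1\supset L_2\supset\cdots\supset L_n$ and iterate the lemma: $c_*(\ind_X|_{L_r})$ has associated polynomial $D^r(\gamma_X)$, hence $\chi(X_r)=\int c_*(\ind_X|_{L_r})=D^r(\gamma_X)(0)$ and $\chi_X(t)=\sum_r D^r(\gamma_X)(0)\,(-t)^r$. It then remains to check the formal identity $\sum_r D^r(P)(0)\,(-t)^r=\cI_{SM}(P)(t)$ for all $P\in\ZZ[t]^{\leq n}$: writing $P=\sum_m c_m(1+t)^m$, one has $D\bigl((1+t)^m\bigr)=(1+t)^{m-1}$ for $m\ge1$ and $D(1)=0$, so $D^r(P)(0)=\sum_{m\ge r}c_m$, and then
\[
\sum_r D^r(P)(0)\,(-t)^r=\sum_m c_m\sum_{r=0}^{m}(-t)^r=\sum_m c_m\,q_m(t)=\sum_m c_m\,\cI_{SM}\bigl((1+t)^m\bigr)(t)=\cI_{SM}(P)(t),
\]
using the basis computation from the first paragraph. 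This gives $\cI_{SM}(\gamma_X)=\chi_X$, and applying the involution yields $\cI_{SM}(\chi_X)=\gamma_X$.

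I expect the main obstacle to be the geometric lemma, and inside it the ``generic base change'' step: justifying that for a general member of the universal family of hyperplanes, $c_*$ of the restricted constructible function coincides with the refined Gysin pullback of $c_*$ of the pulled‑back function. This is a transversality statement for MacPherson's natural transformation — restriction to a smooth subvariety transverse to the strata of $\varphi$ commutes with $c_*$ up to the expected normal‑bundle twist — which requires choosing the hyperplane generic with respect to a stratification adapted to $\varphi$ together with a Kleiman‑type transversality argument over the base $\PP(V^*)$. Once that input is in place, everything else is bookkeeping with the operator $D$ and elementary binomial identities.
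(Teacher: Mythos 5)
The paper does not prove this statement: it is quoted as Aluffi's theorem from \cite{Aluffi1} with no internal argument, so there is nothing in the text to compare your proof against line by line; your proposal is an actual reconstruction of a proof, and it is correct in every step you carry out in full. The formal parts all check: $(t+1)$ divides the numerator, $\cI_{SM}\bigl((1+t)^m\bigr)=q_m$ and $\cI_{SM}(q_m)=(1+t)^m$, $D\bigl((1+t)^m\bigr)=(1+t)^{m-1}$, $D$ takes the $\gamma$-polynomial of $\ind_{\PP^m}$ to that of $\ind_{\PP^{m-1}}$, and $\sum_r D^r(P)(0)(-t)^r=\cI_{SM}(P)$. (A small simplification: the involution also follows from a two-line substitution, since $\cI_{SM}(p)(-1-t)=\frac{(1+t)p(t)-p(0)}{t}$ and $\cI_{SM}(p)(0)=p(0)$, whence $\cI_{SM}^2(p)=p$ with no choice of bases.) As you note, the entire geometric content is concentrated in the lemma that a general hyperplane section acts on $\gamma$-polynomials by $D$, equivalently that $i_*c_{sm}(X\cap H)=\frac{H}{1+H}\cdot i_*c_{sm}(X)$ in $A_*(\PP^N)$; this is exactly the nontrivial input in Aluffi's own treatment as well. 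Your route through the universal hyperplane, Verdier--Riemann--Roch for the smooth projection $p$, and transversality of a general $q$-fiber is a standard and workable way to obtain it, and you correctly isolate the non-characteristic/generic base-change step as the one needing a genuine argument (Kleiman-type transversality relative to a stratification adapted to $\varphi$, or Sch\"urmann's Verdier--Riemann--Roch for non-characteristic pullback). An alternative that avoids that machinery is to use linearity of both sides in $\varphi$ together with resolution of singularities to reduce to $\varphi=f_*\ind_V$ with $V$ smooth and $f^{-1}(H)$ a smooth general member, where the identity follows from functoriality of $c_*$, adjunction, and the projection formula. Either way, the lemma is outside input rather than bookkeeping, so your proof is complete only modulo that correctly identified step.
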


In particular, we have the following property. 
\begin{prop}
\label{prop; csm(-1)}
For a generic hyperplane $H$, we have
$$
c_{sm}^X(-1)=(-1)^{n } (\chi(X)-\chi(X\cap H)) \/.
$$
\end{prop}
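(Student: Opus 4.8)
The plan is to read the identity straight off Aluffi's involution (Theorem~\ref{theo; EulerChern}) by a single differentiation at $t=0$, after recording how $c_{sm}^X(-1)$ is encoded in the polynomial $\gamma_X$. First I would reduce to a statement about $\gamma_X$ alone. Since $c_{sm}(X)$ is supported in dimensions $0,\dots,n$, the polynomial $\gamma_X(t)=\sum_i\gamma_i t^i$ has degree $\le n$, so Theorem~\ref{theo; EulerChern} applies to it. Unwinding the definition of $c_{sm}^X$ — the coefficient-reversal of $\gamma_X$ inside degree $\le n$, so that $c_{sm}^X(t)=t^{\,n}\gamma_X(1/t)$ — evaluation at $t=-1$ gives $c_{sm}^X(-1)=(-1)^n\gamma_X(-1)$. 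Hence it suffices to prove
\[
\gamma_X(-1)=\chi(X)-\chi(X\cap H).
\]

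Next I would clear the denominator in $\chi_X=\cI_{SM}(\gamma_X)$ to obtain the polynomial identity
\[
(1+t)\,\chi_X(t)=t\,\gamma_X(-t-1)+\gamma_X(0).
\]
Setting $t=0$ recovers $\chi_X(0)=\gamma_X(0)$, i.e.\ $\chi(X)=\gamma_0$, in agreement with $\int_X c_{sm}(X)=\chi(X)$ from the Remark. Differentiating the identity once in $t$ (using the chain rule on $\gamma_X(-t-1)$) and then setting $t=0$ kills the term carrying $\gamma_X'(-1)$ and leaves
\[
\chi_X(0)+\chi_X'(0)=\gamma_X(-1).
\]

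To finish, I would interpret the left-hand side via the definition $\chi_X(t)=\sum_r\chi(X_r)(-t)^r$, which gives $\chi_X(0)=\chi(X_0)=\chi(X)$ and $\chi_X'(0)=-\chi(X_1)=-\chi(X\cap H)$; here genericity of $H$ is used only to identify $X\cap H$ with the first generic section $X_1$. Substituting into the displayed identity yields $\gamma_X(-1)=\chi(X)-\chi(X\cap H)$, and combining with the first paragraph gives $c_{sm}^X(-1)=(-1)^n\bigl(\chi(X)-\chi(X\cap H)\bigr)$.

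The computation is entirely formal, so there is no real obstacle; the only place that needs care is the sign/degree bookkeeping of the first paragraph. One must use that $\gamma_X$ has degree $\le n$ (so that passing to the reciprocal polynomial $c_{sm}^X$ contributes exactly the factor $(-1)^n$ and not $(-1)^N$ with $N=\dim\PP(V)$), and one must keep Aluffi's involution applied on $\ZZ[t]^{\le n}$, which is precisely where $\gamma_X$ lives.
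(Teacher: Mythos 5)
Your derivation of $\gamma_X(-1)=\chi(X)-\chi(X\cap H)$ is correct, and it is essentially the paper's own computation in different packaging: the paper expands $\gamma_X=\cI_{SM}(\chi_X)$ term by term to get $\gamma_X(t)=\chi_0+t\chi_1+t(1+t)F(t)$ and sets $t=-1$, while you differentiate the cleared identity $(1+t)\chi_X(t)=t\,\gamma_X(-t-1)+\gamma_X(0)$ once at $t=0$; both simply extract the coefficients $\chi_0$ and $\chi_1$ from the involution, so there is no real difference of method there.

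The step that does not hold up is the reduction in your first paragraph. With the paper's definition $c_{sm}^X(t)=\sum_i\gamma_{N-i}t^i$, i.e.\ the coefficient of $H^i$ when $i_*c_{sm}(X)$ is written in powers of the hyperplane class, one has $c_{sm}^X(t)=t^N\gamma_X(1/t)$: this is the coefficient reversal in degree $N=\dim\PP(V)$, not in degree $n=\dim X$ (its top term is $\gamma_0t^N=\chi(X)\,t^N$). Hence $c_{sm}^X(-1)=(-1)^N\gamma_X(-1)$, and your parenthetical claim that the bound $\deg\gamma_X\le n$ forces the factor to be ``$(-1)^n$ and not $(-1)^N$'' is exactly backwards: that bound only says $c_{sm}^X$ is divisible by $t^{N-n}$, which supplies the extra $(-1)^{N-n}$. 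In fairness, the paper's statement of the proposition (and its unexplained line $c_{sm}^X(-1)=(-1)^n\gamma_X(-1)$) carries the same $n$-versus-$N$ slip; but the definition $\sm_X=(-1)^{\dim V-1}c_{sm}^X(-1)=\chi(X)-\chi(X\cap H)$ given immediately afterwards, and the sign $(-1)^{\dim V-1}$ in Proposition~\ref{prop; MatherObstruction}, show that the exponent actually used throughout the paper is $\dim V-1=N$. So you should either prove the statement with $(-1)^N$ (which your main computation already does) or explicitly redefine $c_{sm}^X$ as the degree-$n$ reversal; as written, the identity $c_{sm}^X(t)=t^n\gamma_X(1/t)$ is false for the objects the paper defines.
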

\begin{proof}
By the theorem, we have
\begin{align*}
\gamma_X(t)=& \frac{t\cdot \chi_X(-1-t)+\chi(0)}{t+1} \\
=& \frac{t\cdot (\chi_X(0)-\chi_1(-1-t)+\chi_2(-1-t)^2-\cdots ) +\chi_X(0)}{1+t} \\
=& \frac{(t+1)\chi_0+t\cdot \chi_1\cdot (t+1)+t\cdot \chi_2\cdot (1+t)^2+\cdots }{1+t} \\
=& \chi_0 +t\cdot \chi_1+t(1+t)\cdot F(t)
\end{align*}
Thus 
$$
c_{sm}^X(-1)=(-1)^n \cdot \gamma_X(-1)=(-1)^n (\chi(X)- \chi(X\cap H)) \/.
$$
\end{proof} 

Thus if we know the $c_{sm}$ class of $X$, then we can obtain the generic sectional Euler characteristics $\chi(X_r)$.
In general, the characteristic classes of a variety may be hard to compute. However, as in the property, we only need the alternating sum $c_{sm}^X(-1)$, which in practice is much easier to compute. 

\begin{defi}
Let $X\subset \PP(V)$ be a (quasi)-projective variety, and let $\Sigma\subset V$ be its affine cone. We define the $c_{sm}$ invariant of $X$ (or $\Sigma$) as
\[
\sm_{X}=\sm_{\Sigma}:=(-1)^{\dim V-1} c_{sm}^X(-1)=\chi(X)-\chi(X\cap H) \/.
\]
\end{defi}

In fact, if we write 
$
c_{sm}(X)=\sum_{i=0}^{\dim X} X_i ;
$
where $X_i\in A_i(X)$ denotes the dimension $i$ component of $c_{sm}(X)$. 
One can see that $\sm(X)=\sum_{i=0}^{\dim X} (-1)^i \deg X_i$. Thus this definition is intrinsic, i.e., independent of  the embedding of $X$.

\subsection{Dual Varieties and Radon Transform}
Let $X\in \PP^n=\PP(V)$ be a projective variety, and let $\PP^{*n}=\PP(V^*)$ be the (projective) dual space of hyperplanes in $\PP^n$, i.e.,  any point  in  $\PP(V^*)$ is a hyperplane in $\PP(V)$.
The dual variety of $X$, denoted by $X^\vee$, is defined as the closure of the following 
$$
\{H\in \PP^{*n}| \text{there is some }x\in X_{sm} \text{ such that } T_x X_{sm}\subset H\} \/.
$$
Here $X_{sm}$ denotes the smooth locus of $X$. The variety $X$ and its dual are related as follows. Consider the product $\PP(V)\times \PP(V^*)$ and define $I_X$ to be the closure of 
\[
I_X^\circ :=\{(x, h)|x\in X_{sm}; H\in \PP(V^*) \text{ s.t. } T_x X_{sm}\subset H\}
\]
to the the incidence correspondence. Let $p$ and $q$ denote the first and second projection. Then one can check that $p(I_X)=X$, and $q(I_X)=X^\vee$. The correspondence induces the definition of Radon transform:
\begin{defi}
The topological Radon transform is the group homomorphism
\[
F(X)\to F(X^\vee)\colon  \quad 
\lambda\mapsto \lambda^\vee :=q_*p^* \lambda \/.
\]
\end{defi}

If we write $\lambda=\sum_i n_i \ind_{W_i}$ , then by the definition of proper pushforward and pull back we have
\[
\lambda^\vee(L)=\sum_i  n_i \chi (W_i\cap L)
\]
for any point $L\in \PP(V^*)$.

In particular, as proved in \cite{Ernstrom}, the Radon transform takes the Euler obstruction function in $X$ to the Euler obstruction function in $X^\vee$, up to an error term.
\begin{theo}[Ernstr\"{o}m]
\label{theo: Radon}
Let $X$ be of dimension $n$, and $X^\vee$ be of dimension $m$. Let $V$ be of dimension $N+1$. Then 
$$
(Eu_X)^\vee  =(-1)^{n+(N-1)-m} Eu_{X^\vee}+ e_X \ind_{\PP(V^*)}
$$
\end{theo}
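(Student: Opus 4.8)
The plan is to lift the asserted identity to the level of conic Lagrangian (conormal) cycles, where the Radon transform becomes, up to a sign and a zero-section correction, nothing but the interchange of the two factors of $\PP(V)\times\PP(V^*)$; biduality of projective varieties then does the rest. First I would invoke the algebraic incarnation (due to Kennedy; see also Sabbah), valid over an algebraically closed field of characteristic $0$, of the Kashiwara--Dubson index correspondence: a group isomorphism $\mathrm{Ch}$ from $F(\PP(V))$ onto the group of conic Lagrangian cycles in $T^*\PP(V)$, normalized by $\mathrm{Ch}(Eu_Z)=(-1)^{\dim Z}[\mathrm{Con}(Z)]$, where $\mathrm{Con}(Z)$ is the conormal variety of $Z$ (so $\mathrm{Con}(X)=I_X$ in the notation of the statement). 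Under this dictionary $\mathrm{Ch}(\ind_{\PP(V)})$ is, up to the sign $(-1)^N$, the zero section of $T^*\PP(V)$, and on the dual side $\mathrm{Ch}^{-1}$ identifies the integer multiples of the zero section of $T^*\PP(V^*)$ with the integer multiples of $\ind_{\PP(V^*)}$.

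Next I would transport the Radon transform through $\mathrm{Ch}$. Because the two projections $p,q$ from the point--hyperplane incidence $\{(x,H):x\in H\}$ are $\PP^{N-1}$-bundles, $R=q_*p^*$ corresponds to the pushforward--pullback of Lagrangian cycles along this correspondence, up to the relative-dimension sign of a microlocal pullback. The geometric heart is then to locate the image of $p^!\,[\mathrm{Con}(X)]$ under $q_!$: a tangent-space analysis along the locus $\{(x,H):x\in X,\ x\in H\}$ shows that a cotangent vector lying over a point $L\in\PP(V^*)$ can survive only if it is zero, in which case it contributes the zero section, or the underlying contact point $x$ satisfies $T_xX\subset L$, i.e.\ $(x,L)\in I_X$. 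By biduality $I_X=I_{X^\vee}$, so the nonzero surviving covectors sweep out exactly $[\mathrm{Con}(X^\vee)]$, with multiplicity $1$ by a generic count. Hence
\[
q_!\,p^!\,[\mathrm{Con}(X)]=\pm\,[\mathrm{Con}(X^\vee)]+c\,[\text{zero section of }T^*\PP(V^*)],\qquad c\in\ZZ,
\]
and translating back through $\mathrm{Ch}^{-1}$, using $\mathrm{Ch}(Eu_X)=(-1)^n[\mathrm{Con}(X)]$, $\mathrm{Ch}(Eu_{X^\vee})=(-1)^m[\mathrm{Con}(X^\vee)]$ and the relative dimension $N-1$, yields $R(Eu_X)=(-1)^{n+(N-1)-m}Eu_{X^\vee}+e_X\,\ind_{\PP(V^*)}$ for some integer $e_X$. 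The precise exponent is pinned down by testing on a linear subspace $X=\PP^k$, where both sides are computed directly ($Eu_{\PP^k}=\ind_{\PP^k}$ and $R(\ind_{\PP^k})=\ind_{(\PP^k)^\vee}+k\,\ind_{\PP(V^*)}$).

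It remains to identify $e_X$, which I would do by evaluating both sides at a generic $L\in\PP(V^*)$, i.e.\ one not lying on $X^\vee$. There $Eu_{X^\vee}(L)=0$, while by the defining formula for the Radon transform $R(Eu_X)(L)=\chi(X\cap L,\,Eu_X|_{X\cap L})$. A generic hyperplane meets every stratum of $X$ transversally, so $Eu_X|_{X\cap L}=Eu_{X\cap L}$, and therefore $e_X=\int_{X\cap L}c_M(X\cap L)$, an integer attached to the embedded variety $X\subset\PP(V)$ and expressible through $c_M(X)$ in the spirit of Theorem~\ref{theo; EulerChern}. The same transversality argument shows independently that $R(Eu_X)$ is the constant $e_X$ on $\PP(V^*)\smallsetminus X^\vee$, which serves as a consistency check.

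I expect the main obstacle to be the microlocal step: establishing the pushforward--pullback formula for the incidence correspondence together with the ``no stray conormal components'' claim, with the correct multiplicities and signs, in the purely algebraic setting rather than only over $\CC$ (where one could instead invoke the sheaf-theoretic Radon transform and its known compatibility with characteristic cycles). A formalism-free fallback I would keep in reserve is a local Lefschetz-pencil computation: show that $\chi(X\cap L,\,Eu_X|_{X\cap L})$ is locally constant off $X^\vee$, and then compute its jump as $L$ degenerates to a point $L_0\in X^\vee$ in terms of the local structure of $I_X$ over $L_0$, recognizing this jump as $\pm Eu_{X^\vee}(L_0)$ via biduality; the price there is a delicate computation of Euler characteristics of the relevant local Milnor-fibre-type spaces.
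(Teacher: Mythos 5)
The paper does not prove this statement: it is quoted verbatim from Ernstr\"om's work (the citation \cite{Ernstrom}), so there is no in-paper proof to compare against. Judged on its own, your outline follows what is essentially Ernstr\"om's original strategy: identify constructible functions with conic Lagrangian cycles via the Kennedy--Sabbah characteristic-cycle isomorphism normalized by $\mathrm{Ch}(Eu_Z)=(-1)^{\dim Z}[\mathrm{Con}(Z)]$, transport the Radon transform to a Lagrangian correspondence on cotangent bundles, and use the biduality of conormal varieties ($I_X=I_{X^\vee}$, valid in characteristic $0$) to recognize the non-degenerate part of the image as $[\mathrm{Con}(X^\vee)]$, with the zero-section component accounting for the constant term $e_X$. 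Your sign bookkeeping is consistent (the test on linear subspaces gives exponent $2k$, matching the $+1$ one computes directly), and the identification of $e_X$ as the generic value $\chi(X\cap L, Eu_X|_{X\cap L})=\int_{X\cap L}c_M(X\cap L)$ agrees with the remark following the theorem in the paper.

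That said, as a proof the proposal is incomplete exactly where you say it is: the statement that $q_!p^![\mathrm{Con}(X)]$ equals $\pm[\mathrm{Con}(X^\vee)]$ plus an integer multiple of the zero section, \emph{with multiplicity one on the conormal component and no extraneous Lagrangian components}, is the entire content of the theorem, and your ``tangent-space analysis'' only indicates which components can appear, not their multiplicities. Establishing the functoriality of Lagrangian cycles under the incidence correspondence (a non-submersive, non-closed-immersion map, so neither of the two elementary functoriality statements applies directly) is precisely the technical core of Ernstr\"om's paper and of Sabbah's earlier work it relies on. So the route is the right one and the skeleton is sound, but the proposal should be read as a reduction of the theorem to the microlocal pushforward--pullback formula rather than a proof of it.
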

Here $e_X:=(Eu_X)^\vee(H)$ for generic hyperplanes $H$.  

This theorem shows that the Euler obstruction of the dual variety and the (signed) dual of the Euler obstruction differ by the Euler characteristic of generic hyperplane section.
\begin{rema}
Recall that there is a unique expression $\ind_X=\sum_i e_i Eu_{W_i}$ for closed subvarieties $W_i$. Then the generic value $e_x$ can be expressed as 
$$
e_X =\sum_i e_i \chi(W_i\cap H)
$$  
for a generic hyperplane $H$. Here by generic we mean that, by Bertini's theorem there is an open dense subset $U$ in $\PP(V^*)$ such that $\chi(W_i\cap H)$ is constant for any $H\in U$. 
\end{rema}

\begin{coro}
\label{coro; generic}
Let $X$ and $X^\vee$ be a pair of dual varieties. Assume that  $X=\cup_{i=1}^m S_i$ is a stratification. 
Let $L$ be a hyperplane such that $L$ lives outside $X^\vee$. Then $L$ is Euler characteristic generic with respect to $X$, i.e., 
$$
\chi(X\cap L)=\chi(X\cap H)
$$
for generic hyperplanes $H\in U$. 
In particular, if $L\notin \bar{S}_i^\vee$, then $L$ is Euler characteristic generic with respect to $S_i$:  $\chi(S_i\cap L)=\chi(S_i\cap H)$.

Moreover, when $X^\vee$ admits a stratification $\cup T_i$, we have 
$
\chi(X\cap L_1)=\chi(X\cap L_2)
$
for any $L_i$ and $L'_i$ living in the same stratum $T_i $.
\end{coro}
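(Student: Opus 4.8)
The plan is to deduce everything from Ernstr\"om's Theorem~\ref{theo: Radon} together with the characterization of the dual variety as the locus of ``special'' hyperplanes. First I would recall that by definition a hyperplane $L \notin X^\vee$ has the property that $T_x X_{sm} \not\subset L$ for every smooth point $x \in X_{sm}$, so that $L$ meets $X_{sm}$ transversally; by Bertini this is exactly the condition that defines the generic open set $U \subset \PP(V^*)$ on which $\chi(X \cap H)$ is constant. Thus the first step is: the complement $\PP(V^*) \smallsetminus X^\vee$ is a (nonempty, open, dense) subset of $U$, hence for $L$ in this complement we already get $\chi(X \cap L) = \chi(X \cap H)$ for generic $H$. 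Alternatively, and more in the spirit of the preceding material, I would evaluate the topological Radon transform: writing $\ind_X = \sum_i e_i\, Eu_{W_i}$ and applying $q_* p^*$, Theorem~\ref{theo: Radon} gives $(\ind_X)^\vee = \sum_i e_i (Eu_{W_i})^\vee = (\text{signed }Eu\text{-terms supported on the }W_i^\vee) + (\sum_i e_i e_{W_i})\ind_{\PP(V^*)}$. Evaluating at a point $L \notin X^\vee = \bigcup_i \bar W_i^\vee$ (using that $X^\vee$ is the union of the duals of the strata, which in turn dominate the duals of their closures' components) kills all the Euler-obstruction terms, leaving only the constant term, which by the Remark equals $\sum_i e_i \chi(W_i \cap H) = \chi(X \cap H)$ for generic $H$. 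Since the left side is by definition $\sum_i e_i \chi(W_i \cap L) = \chi(X \cap L)$, this proves the first assertion.

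For the statement about an individual stratum $S_i$, I would apply exactly the same argument with $X$ replaced by the closed subvariety $\bar S_i$: its constructible function $\ind_{S_i}$ (or $\ind_{\bar S_i}$) again expands in Euler obstructions of subvarieties of $\bar S_i$, all of whose duals are contained in $\bar S_i^\vee$, so the hypothesis $L \notin \bar S_i^\vee$ forces $\chi(S_i \cap L)$ to equal the generic value $\chi(S_i \cap H)$. One small bookkeeping point: one must check that $\bar S_i^\vee$ is built from the duals of the closed subvarieties appearing in the Euler-obstruction expansion of $\ind_{S_i}$ on $\bar S_i$; this follows because each such subvariety is contained in $\bar S_i$, hence has dual contained in $\bar S_i^\vee$, so all the non-constant terms in the Radon transform of $\ind_{S_i}$ vanish at $L$.

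Finally, for the last sentence: if $X^\vee$ is stratified as $\bigcup T_j$ and $L_1, L_2$ lie in the same stratum $T_j$, then in particular both lie in $\PP(V^*)$ and the constructible function $(\ind_X)^\vee$ is, by property (2) of a stratification, constant on $T_j$ --- indeed its value at any $L \in T_j$ is governed by the Euler obstruction functions appearing in Theorem~\ref{theo: Radon}, which take a common value along each stratum of $X^\vee$. Hence $\chi(X \cap L_1) = (\ind_X)^\vee(L_1) = (\ind_X)^\vee(L_2) = \chi(X \cap L_2)$. The main obstacle I anticipate is purely one of care rather than depth: making precise that the ``error term'' $e_X \ind_{\PP(V^*)}$ in Ernstr\"om's theorem is genuinely the only contribution surviving outside $X^\vee$ --- i.e. that every Euler-obstruction summand $Eu_{W_i^\vee}$ (and the correction terms hidden inside $(Eu_{W_i})^\vee$ for lower strata) is supported on $X^\vee$ --- which requires tracking the recursion in Ernstr\"om's formula through all the strata and invoking that the dual of a union is the union of the duals. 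Once that support statement is nailed down, the three claims are immediate evaluations.
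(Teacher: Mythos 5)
Your overall route --- expand $\ind_X$ in Euler obstructions, push it through Ernstr\"om's Theorem~\ref{theo: Radon}, and argue that away from the ``discriminant'' only the constant term $e_X\ind_{\PP(V^*)}$ survives --- is the same as the paper's. But the step you use to kill the non-constant terms is false: projective duality does not preserve inclusions, so $W\subset Y$ does \emph{not} give $W^\vee\subset Y^\vee$, and in particular $X^\vee$ is \emph{not} the union $\bigcup_i \bar W_i^\vee$ of the duals of the strata closures. For instance the dual of a point $p\in X$ is the entire hyperplane $\{H: p\in H\}\subset\PP(V^*)$, which is not contained in $X^\vee$; for a nodal plane cubic the pencil of lines through the node is a line in the dual plane not contained in the dual (quartic) curve. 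Hence $L\notin X^\vee$ does not force $Eu_{\bar S_j^{\vee}}(L)=0$ for the lower strata, and the Euler-obstruction terms in your expansion of $(\ind_X)^\vee$ do not all vanish at $L$. The same objection defeats your Bertini variant: $L\notin X^\vee$ only guarantees transversality to $X_{sm}$, which for singular $X$ does not pin down $\chi(X\cap L)$ --- a line through the node of a nodal cubic, tangent to neither branch, lies outside the dual curve yet meets the cubic in $2$ rather than $3$ points. So the hypothesis ``$L\notin X^\vee$'' alone cannot carry the argument; the condition one actually needs is that $L$ avoids $\bigcup_j\bar S_j^{\vee}$.

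The paper's proof is structured to make this visible: it applies Theorem~\ref{theo: Radon} once to $X$ itself, so that $L\notin X^\vee$ gives $Eu_{X^\vee}(L)=0$ and hence the single scalar relation $\sum_j e_j\chi(S_j\cap L)=\sum_j e_j\chi(S_j\cap H)$ with $e_j=Eu_X(S_j)$, and then it inducts on the dimension of the strata by applying the same relation to each closure $\bar S_j$ (base case: a smooth closure, where $Eu=\ind$ and the relation is the desired equality). That induction requires $L$ to avoid $\bar S_j^{\vee}$ for the smaller strata as well --- which is exactly why the second sentence of the corollary states $L\notin\bar S_i^{\vee}$ as a separate hypothesis rather than deducing it from $L\notin X^\vee$. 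To repair your write-up, replace every appeal to ``the duals of the $W_i$ are contained in $X^\vee$'' by the explicit hypothesis that $L$ lies outside $\bigcup_j\bar S_j^{\vee}$, and organize the vanishing as the paper's triangular system over the strata. The final claim about $L_1,L_2$ in a common stratum $T_i$ of $X^\vee$ has the same structure and the same caveat: constancy of $Eu_{X^\vee}$ on $T_i$ gives constancy of $(Eu_X)^\vee$ there, i.e.\ one relation among the $\chi(S_j\cap L)$, and the remaining relations must again come from the induction over the closures $\bar S_j$.
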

\begin{proof}
Recall that $l\notin X^\vee$ implies that $Eu_{X^\vee} (l)=0$. 
Let $e_j$ be the local Euler obstruction $Eu_{X}(S_j)$, then   the theorem says:
\[
(Eu_X)^\vee(L)=\sum_j e_j \chi(S_j\cap L)=0+\sum_i e_j \chi(S_i\cap H)
\]
Since there are only  finitely many strata, by induction on the dimension of strata it suffice to prove the argument for smooth $X$,  otherwise  its singularity locus induces a smaller stratum. 
When $X$ is smooth, we have $Eu_{X}=\ind_X$, and the theorem shows $\chi(X\cap L)=\chi(X\cap H)$. 

When $X^\vee=\cup T_i$ is a stratification, and $\{L_i, L_i'\}\subset T_i$ are two hyperplanes of $\PP(V)$.
The theorem shows that $(Eu_X)^\vee(L_i)=(Eu_X)^\vee(L'_i)$. Thus we have
\[
\sum_j e_j \chi(S_j\cap L_i) = \sum_j e_j \chi(S_j\cap L'_i) \/.
\]
Then the same induction  on the  dimensions of $S_i$ completes the proof. 
\end{proof}

\subsection{Projective Duality}

As pointed out in \cite[Prop 3.20]{Aluffi1}, the two-way projections from conormal space induce  an involution of 
Chern-Mather classes of dual varieties:
\begin{theo}[Aluffi]
Let $X\subset \PP(V)$ be a proper projective variety of dimension $d$, and let $X^\vee$ be its dual variety of dimension $d^*$. We write $c_M^X(H)\in A_*(\PP(V))$ and $c_M^{X^\vee}(H)\in A_*(\PP(V^*))$ as polynomials in $H$. Then 
\[
\cJ_n((-1)^{d}c_M^X(H))=(-1)^{d^*}c_M^{X^\vee}(H); \quad \cJ_n((-1)^{d^*}c_M^{X^\vee}(H))=(-1)^{d}c_M^X(H)
\]
Here $\dim V=n+1$, and $\cJ_n\colon \ZZ[H]\to \ZZ[H]$  is defined as 
\[
\cJ_n(p(H))=p(-1-H)-p(-1)\left( (1+H)^{n+1}-H^{n+1}  \right) \/.
\]
\end{theo}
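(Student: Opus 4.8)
The plan is to deduce the statement from Ernstr\"om's theorem (Theorem~\ref{theo: Radon}): apply MacPherson's natural transformation $c_*$ to both sides of Ernstr\"om's formula, and independently compute the effect of the topological Radon transform on the Chern--Mather class directly inside the Chow ring of $\PP(V)\times\PP(V^*)$.

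For the set-up, let $\mathbb{I}=\{(x,L)\mid x\in L\}\subset\PP(V)\times\PP(V^*)$ be the universal hyperplane, with the two $\PP^{n-1}$-bundle projections $p\colon\mathbb{I}\to\PP(V)$ and $q\colon\mathbb{I}\to\PP(V^*)$; let $\xi,\eta$ be the hyperplane classes of the two factors, so that $A_*(\PP(V)\times\PP(V^*))=\ZZ[\xi,\eta]/(\xi^{n+1},\eta^{n+1})$, $[\mathbb{I}]=\xi+\eta$, and the projection $\pi$ to the second factor satisfies $\pi_*(\xi^{a}\eta^{b})=\eta^{b}$ if $a=n$ and $0$ otherwise. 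Realizing $p$ as the projectivization of $\mathcal{E}:=\ker\!\big(V^*\otimes\cO_{\PP(V)}\twoheadrightarrow\cO_{\PP(V)}(1)\big)$, so that $c(\mathcal{E})=(1+\xi)^{-1}$ and $c_1(\cO_{\PP(\mathcal{E})}(1))=\eta|_{\mathbb{I}}$, the relative Euler sequence gives $c(T_p)=\sum_{i=0}^{n}(-1)^{i}\xi^{i}(1+\eta)^{n-i}$ on $\mathbb{I}$ (and $c(T_q)$ symmetrically).

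The computation goes as follows. On one hand, naturality of $c_*$ along the proper map $q$, Verdier--Riemann--Roch for $c_*$ along the smooth map $p$ (the standard identity $c_*(p^*\mu)=c(T_p)\cap p^*c_*(\mu)$), and $c_*(Eu_X)=c_M^X(\xi)$ give
\[
c_*\big((Eu_X)^\vee\big)=q_*\big(c(T_p)\cap p^*c_M^X(\xi)\big)=\pi_*\!\Big((\xi+\eta)\cdot\Big(\textstyle\sum_{i=0}^{n}(-1)^{i}\xi^{i}(1+\eta)^{n-i}\Big)\cdot c_M^X(\xi)\Big).
\]
Writing $c_M^X(\xi)=\sum_j a_j\xi^{n-j}$ (so $a_j$ is the degree of the dimension-$j$ part of $c_M(X)$), extracting the coefficient of $\xi^{n}$, and using the telescoping $\sum_{i=0}^{n}(-1)^{i}\xi^{i}(1+\eta)^{n-i}=\big((1+\eta)^{n+1}+(-1)^{n}\xi^{n+1}\big)/(1+\xi+\eta)$ to make the collapse transparent, one obtains
\[
c_*\big((Eu_X)^\vee\big)=(-1)^{n+1}\,c_M^X(-1-\eta)+\Big(\textstyle\int_X Eu_X\Big)(1+\eta)^{n+1}.
\]
On the other hand, applying $c_*$ to Ernstr\"om's identity and using $c_*(\ind_{\PP(V^*)})=c(T\PP(V^*))\cap[\PP(V^*)]=(1+\eta)^{n+1}$ gives $c_*\big((Eu_X)^\vee\big)=(-1)^{d+(n-1)-d^*}\,c_M^{X^\vee}(\eta)+e_X\,(1+\eta)^{n+1}$. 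Comparing the two and solving for $c_M^{X^\vee}$,
\[
(-1)^{d^*}c_M^{X^\vee}(H)=(-1)^{d}c_M^X(-1-H)+(-1)^{d+n+1}\big(\textstyle\int_X Eu_X-e_X\big)(1+H)^{n+1}.
\]
Now $\int_X Eu_X-e_X=(-1)^{n}c_M^X(-1)$: this is exactly Proposition~\ref{prop; csm(-1)} (equivalently Aluffi's involution, Theorem~\ref{theo; EulerChern}) applied to the constructible function $Eu_X$ instead of $\ind_X$, since $c_*(Eu_X)=c_M^X(H)$ and $e_X$ is the generic value of $(Eu_X)^\vee$, i.e.\ the integral of $Eu_X$ restricted to a generic hyperplane. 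Substituting this in, and recalling that $c_M^X(H)$ is normalized to degree $\le n$ --- so that the class $(1+H)^{n+1}\in A_*(\PP^{n})$ is represented by the polynomial $(1+H)^{n+1}-H^{n+1}$ --- turns the last display into precisely $\cJ_n\big((-1)^{d}c_M^X(H)\big)=(-1)^{d^*}c_M^{X^\vee}(H)$; the reverse identity follows by applying this to $X^\vee$ and using reflexivity $X^{\vee\vee}=X$ (valid in characteristic $0$).

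The main obstacle is the middle computation: tracking the signs out of the relative Euler sequence through the multiplication by $[\mathbb{I}]=\xi+\eta$ and the pushforward $\pi_*$ (i.e.\ reading off the $\xi^{n}$-coefficient), and checking that the two surviving sums telescope into exactly $(-1)^{n+1}c_M^X(-1-\eta)$ plus a single multiple of $(1+\eta)^{n+1}$ whose coefficient is $\int_X Eu_X$ --- it is this identification that forces the defect identity above and accounts for the $-H^{n+1}$ term in the definition of $\cJ_n$. A more geometric alternative, which is the route of \cite{Aluffi1}, bypasses Ernstr\"om's theorem and instead writes both $(-1)^{d}c_M^X(H)$ and $(-1)^{d^*}c_M^{X^\vee}(H)$ as the two projections (``shadows'') to $\PP(V)$ and $\PP(V^*)$ of the single conormal cycle $I_X=I_{X^\vee}\subset\PP(V)\times\PP(V^*)$ (biduality), reducing the claim to the combinatorial fact that $\cJ_n$ intertwines these two shadow operations, equivalently conjugates the involution that interchanges the $(\xi,\eta)$-bidegrees.
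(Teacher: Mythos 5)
Your proof is correct, and it necessarily takes a different route from the paper, because the paper does not prove this statement at all: it quotes it from \cite[Prop.\ 3.20]{Aluffi1}, where the argument runs through biduality of the conormal cycle and the ``shadow'' construction --- precisely the geometric alternative you sketch at the end. What you do instead is derive the involution from Ernstr\"om's theorem (Theorem~\ref{theo: Radon}, also quoted without proof in the paper) by computing $c_*\bigl((Eu_X)^\vee\bigr)$ twice. I checked the central computation: with $c(T_p)=\sum_{i=0}^n(-1)^i\xi^i(1+\eta)^{n-i}$ one has $(1+\xi+\eta)\cdot c(T_p)=(1+\eta)^{n+1}+(-1)^n\xi^{n+1}$ exactly, hence $(\xi+\eta)\cdot c(T_p)\equiv(1+\eta)^{n+1}-c(T_p) \pmod{\xi^{n+1}}$, and reading off the $\xi^n$-coefficient against $c_M^X(\xi)=\sum_j a_j\xi^{n-j}$ gives exactly $a_0(1+\eta)^{n+1}+(-1)^{n+1}c_M^X(-1-\eta)$ with $a_0=\int_XEu_X$; the defect identity $\int_XEu_X-e_X=(-1)^nc_M^X(-1)$ then follows from Proposition~\ref{prop; csm(-1)} applied stratum by stratum to $Eu_X$ (note the sign there is $(-1)^{\dim\PP(V)}$, as the Definition of $\sm$ immediately after it confirms), and the truncation $(1+H)^{n+1}\rightsquigarrow(1+H)^{n+1}-H^{n+1}$ accounts for the last term of $\cJ_n$. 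The trade-off between the two routes: Aluffi's proof is self-contained at the level of Lagrangian cycles and makes the involutive nature manifest, while yours exposes that, granted the Verdier--Riemann--Roch formula $c_*(p^*\mu)=c(T_p)\cap p^*c_*(\mu)$ for the $\PP^{n-1}$-bundle projection from the universal incidence variety (the one external ingredient you should cite explicitly, e.g.\ Yokura's VRR for smooth morphisms), Ernstr\"om's theorem and Aluffi's involution are essentially equivalent statements --- the constructible-function identity and its image under $c_*$. There is no circularity, since Ernstr\"om's theorem has an independent proof. The second identity does follow from the first by reflexivity $X^{\vee\vee}=X$ in characteristic $0$, or alternatively from the directly verifiable fact that $\cJ_n$ is an involution on polynomials of degree at most $n$.
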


As corollary of the duality theorem, we have the following result.
\begin{prop}
\label{prop; MatherObstruction}
Let $X\subset \PP(V)$ be a proper projective variety. Let $C_X$ to be the affine cone of $X$ in $V$. Then 
$$
Eu_{C_X}(0)=(-1)^{\dim V-1} c_{M}^X(-1) \/.
$$
Moreover, let $X^\vee$ be the dual variety of $X$ in $\PP(V^*)$, then we have
$$
(-1)^{d+d^*+\dim V}\cdot Eu_{C_X}(0)=Eu_{C_{X^\vee}}(0) \/.
$$
\end{prop}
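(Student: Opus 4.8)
The plan is to deduce Proposition~\ref{prop; MatherObstruction} directly from the two theorems stated just before it: Aluffi's projective duality for Chern-Mather classes, and the algebraic formula packaging $Eu_{C_X}(0)$ in terms of the Chern-Mather class of $X$. So the first task is to establish the scalar identity $Eu_{C_X}(0)=(-1)^{\dim V-1}c_M^X(-1)$. The natural route is to relate the Nash blowup of the affine cone $C_X\subset V$ over the vertex $0$ to the Nash blowup of $X\subset\PP(V)$. Writing $\dim X=d$ so $\dim C_X=d+1$, I would note that away from the vertex $C_X$ is a $\mathbb{G}_m$-bundle over $X_{sm}$, and the Nash modification $\widehat{C_X}\to C_X$ over $0$ has fiber naturally identified (via the projection $V\smallsetminus 0\to\PP(V)$ together with the Euler vector field) with the Nash modification $\hat X$ of $X$; the tautological rank-$(d+1)$ bundle $\cT_{C_X}$ restricted to this fiber splits as $\cT_X\oplus\cO$, the $\cO$ summand coming from the radial direction. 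Feeding this into the defining integral $Eu_{C_X}(0)=\int_{p^{-1}(0)}c(\cT_{C_X})\cap s(p^{-1}(0),\widehat{C_X})$ and comparing with the expression for the Chern-Mather class $c_M^X=p_*(c(\cT_X)\cap[\hat X])$ read off at the hyperplane class degree, one extracts exactly the claimed evaluation at $H=-1$, with the sign $(-1)^{\dim V-1}=(-1)^{d+\dim X^c}$ bookkeeping the codimension/degree shift when passing from $A_*(\PP(V))$ to a number. Concretely, if $c_M^X(H)=\sum_i a_i H^i$ under $i_*$, then $c_M^X(-1)=\sum_i(-1)^i a_i$, and the cone computation identifies $Eu_{C_X}(0)$ with $(-1)^{\dim V - 1}$ times this alternating sum.

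Second, with the scalar formula in hand, the duality statement is a two-line consequence. Apply the formula to both $X$ and $X^\vee$:
\begin{align*}
Eu_{C_X}(0)&=(-1)^{\dim V-1}c_M^X(-1), & Eu_{C_{X^\vee}}(0)&=(-1)^{\dim V^*-1}c_M^{X^\vee}(-1),
\end{align*}
and recall $\dim V^*=\dim V=n+1$. Now evaluate Aluffi's duality $\cJ_n((-1)^d c_M^X(H))=(-1)^{d^*}c_M^{X^\vee}(H)$ at $H=-1$. Since $\cJ_n(p(H))=p(-1-H)-p(-1)\bigl((1+H)^{n+1}-H^{n+1}\bigr)$, plugging in $H=-1$ kills the correction term (as $(1+H)^{n+1}=0$ and $-H^{n+1}=-(-1)^{n+1}$, but multiplied by... let me be careful) — actually at $H=-1$ we get $\cJ_n(p)(-1)=p(0)-p(-1)\bigl(0-(-1)^{n+1}\bigr)=p(0)+(-1)^{n+1}p(-1)$. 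This introduces a $p(0)$ term, so the cleanest path is instead to evaluate at $H=0$ after the change of variable, i.e. to use that $\cJ_n((-1)^dc_M^X(H))$ evaluated appropriately recovers $(-1)^{d^*}c_M^{X^\vee}$; since $c_M^X(0)$ is just the degree-$d$ (top) term which has a controlled sign, the $p(0)$ contributions cancel between the two applications of the duality and one is left with $(-1)^d c_M^X(-1)=(-1)^{d^*+\text{sign}}c_M^{X^\vee}(-1)$, which after multiplying through by $(-1)^{\dim V-1}$ and reindexing gives exactly $(-1)^{d+d^*+\dim V}Eu_{C_X}(0)=Eu_{C_{X^\vee}}(0)$.

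The step I expect to be the main obstacle is the first one: making the identification of the Nash blowup of the cone with that of the projective variety fully rigorous, including the splitting $\cT_{C_X}|_{\text{fiber}}\cong \cT_X\oplus\cO$ and the correct matching of Segre classes $s(p^{-1}(0),\widehat{C_X})$ with $[\hat X]$ capped against $H$-powers. The subtlety is that $p^{-1}(0)$ need not be reduced or of the expected dimension a priori, and the cone point contributes the extra trivial direction whose Chern class is $1$ but which shifts degrees; one must track that $\int_{p^{-1}(0)}c(\cT_X\oplus\cO)\cap s(p^{-1}(0),\widehat{C_X})$ really does pick out $\sum_i(-1)^i a_i$ and not some other linear combination. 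An alternative that sidesteps the geometry is to instead combine Proposition~\ref{prop; csm(-1)}, the relation $c_{sm}(X)=\sum a_k c_M(W_k)$, and the known identity $Eu_{C_X}(0)=\int c(\cT_X)\cap s$, expressing everything in terms of the involution $\cI_{SM}$ and $\cJ_n$ and checking the two involutions are intertwined by the cone construction; but the direct Nash-blowup argument above is more transparent once the bundle splitting is justified, so that is the route I would write up, citing \cite{Aluffi1} for the polynomial manipulations with $\cJ_n$.
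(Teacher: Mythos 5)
Your strategy for the second identity is the same as the paper's (evaluate Aluffi's involution $\cJ_n$ at $H=-1$ and combine with the first identity), but the place where you hesitate is exactly where your argument breaks down. You correctly compute $\cJ_n(p)(-1)=p(0)+(-1)^{n+1}p(-1)$, and then you must dispose of the term $p(0)=(-1)^d c_M^X(0)$. The correct observation is simply that $c_M^X(0)=0$: under the paper's convention $c_M^X(H)=\sum_i\gamma_{N-i}H^i$, the constant term is $\gamma_N$, the coefficient of $[\PP^N]$ in $i_*c_M(X)$, and this vanishes because $c_M(X)$ lives in $A_*(X)$ with $\dim X=d<N$ for a proper subvariety. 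Your claim that $c_M^X(0)$ is ``the degree-$d$ (top) term'' misreads the convention (the top-dimensional piece $\gamma_d$ is the coefficient of $H^{N-d}$, not of $H^0$), and the proposed ``cancellation between the two applications of the duality'' is not an argument: only one application of $\cJ_n$ is needed, and nothing cancels --- the term is just zero. Once $c_M^X(0)=0$ is inserted, the remaining sign bookkeeping gives $Eu_{C_{X^\vee}}(0)=(-1)^{\dim V-1+d^*+d}\left(0+(-1)^{\dim V}c_M^X(-1)\right)=(-1)^{d+d^*+\dim V}Eu_{C_X}(0)$, exactly as in the paper.

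For the first identity, the paper simply cites \cite[Proposition 3.17]{Aluffi1}. Your Nash-blowup-of-the-cone sketch is the right geometric picture behind that result (the radial splitting $\cT_{C_X}|_{p^{-1}(0)}\cong\cT_X\oplus\cO$ and the identification of $p^{-1}(0)$ with $\hat X$), but, as you acknowledge, you have not carried out the identification or the Segre class matching, so as written this half is an unproved sketch rather than a proof. Since the statement is available as a citation, either cite it or complete the cone computation; in its current form the first part of your write-up does not establish the formula.
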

\begin{proof}
The first argument is Proposition 3.17 in \cite{Aluffi1}. For the second part, plug $H=-1$ into the involution formula we have
\begin{align*}
Eu_{C_{X^\vee}}(0)
=& (-1)^{\dim V-1} c_M^{X^\vee}(-1) =(-1)^{\dim V-1+ d^*} \cJ_{n}((-1)^{d}c_M^X(H))(-1)\\
=& (-1)^{\dim V-1+ d^*+d}  \left(c_M^X(0)+ c_M^X(-1)\cdot (-1)^{\dim V }\right) \\
=& (-1)^{d+d^*+\dim V}\cdot Eu_{C_X}(0)
\end{align*}
\end{proof}

And we have the following algebraic Brasselet-Lê-Seade type formula.
\begin{prop}
\label{prop; generalizedBLS}
Let $X \subset\PP(V)$ be a projective variety, and let $X=\cup_i S_i$ be a stratification. Let $\Sigma\subset V$ and $V_i$ be the affine cones of $X$ and $S_i$ respectively.  
Recall that $\sm_{S_i}:=(-1)^{\dim V-1}c_{sm}^{S_i}(-1)$. 
Then we have 
\[
Eu_{\Sigma}(0)=\sum_{i} Eu_{X}(S_i)\cdot \sm_{S_i} \/.
\]
\end{prop}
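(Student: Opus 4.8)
The plan is to combine the two statements in Proposition~\ref{prop; MatherObstruction} with the defining relation $c_{sm}(X)=\sum_k a_k c_M(W_k)$ whenever $\ind_X=\sum_k a_k Eu_{W_k}$, applied to the stratification at hand. Concretely, the local Euler obstruction functions $\{Eu_{\bar S_i}\}$ form a basis of $F(X)$, so we may write $\ind_X = \sum_i b_i\, Eu_{\bar S_i}$ for integers $b_i$; by naturality of $c_*$ this gives $c_{sm}(X) = \sum_i b_i\, c_M(\bar S_i)$ as classes in $A_*(X)$, hence in $A_*(\PP(V))$ after pushforward. Taking the polynomial-in-$H$ incarnations and evaluating at $H=-1$ yields $c_{sm}^X(-1) = \sum_i b_i\, c_M^{\bar S_i}(-1)$.

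Next I would identify the coefficients $b_i$. Evaluating the identity $\ind_X = \sum_i b_i\, Eu_{\bar S_i}$ at a point $p$ of a fixed stratum $S_j$, and using that $Eu_{\bar S_i}(p)=0$ unless $S_j\subseteq \bar S_i$ (property (2) of the Proposition on Euler obstructions together with the frontier condition (3) in the definition of a stratification), one gets a unimodular triangular linear system with respect to the closure partial order; in particular the "top" coefficient corresponding to the open stratum $S_0=X_{sm}$ is $1$, and more importantly the value $Eu_X(p)$ for $p\in S_j$ equals $\sum_{i\,:\,S_j\subseteq \bar S_i} b_i\, Eu_{\bar S_i}(p)$. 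Rather than solving the system explicitly, I would invert the logic: I want to show the cone version $Eu_\Sigma(0)=\sum_i Eu_X(S_i)\cdot \sm_{S_i}$, and by the first part of Proposition~\ref{prop; MatherObstruction} the left side is $(-1)^{\dim V-1}c_M^X(-1)$, which is $(-1)^{\dim V-1}$ times the coefficient attached to the top-dimensional piece. The cleanest route is: apply the cone-formula $Eu_{C_Y}(0)=(-1)^{\dim V-1}c_M^Y(-1)$ to $Y=\bar S_i$ for every $i$, so that $(-1)^{\dim V-1}c_M^{\bar S_i}(-1)=Eu_{V_i}(0)$ where $V_i$ is the affine cone of $\bar S_i$; then $(-1)^{\dim V-1}c_{sm}^X(-1)=\sum_i b_i\, Eu_{V_i}(0)$, i.e. $\sm_X=\sum_i b_i\, Eu_{V_i}(0)$. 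Finally, I would note that $Eu_{V_i}(0)$ is exactly the value $Eu_{\bar S_i}$ takes at the cone point, and that evaluating $\ind_X=\sum_j b_j Eu_{\bar S_j}$ at any point of the cone of $S_i$ (equivalently, using that this constructible-function identity on $X$ pulls back along the cone and that $Eu$ of a cone over a stratum at $0$ reproduces the stratum's Euler obstruction value) reorganizes $\sum_i b_i Eu_{V_i}(0)$ into $\sum_i Eu_X(S_i)\cdot \sm_{S_i}$ by summation by parts over the closure order — the combinatorics is precisely the Möbius-style inversion that makes $\sum_i b_i [\text{cone}]$ and $\sum_i Eu_X(S_i)\,\sm_{S_i}$ coincide, because both expand $\ind_X$ against the same triangular basis.

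More carefully, and avoiding any global cone statement I have not set up: the intrinsic identity $\sm_X=\sum_{i=0}^{\dim X}(-1)^i\deg X_i$ from the last remark before this proposition lets me compute $\sm_X$ stratum by stratum via $c_{sm}(X)=\sum_j \ind_{S_j}$ (additivity of $c_*$ over the disjoint strata), giving $\sm_X=\sum_j \sm_{S_j}$. I will instead run the argument for the pair $(\Sigma,0)$ directly: write $\ind_X=\sum_j b_j Eu_{\bar S_j}$, so $Eu_\Sigma(0)=(-1)^{\dim V-1}c_M^X(-1)$ is not quite what I want — rather I want to expand $Eu_\Sigma$ itself. The better-adapted statement is: for the affine cone, $Eu_\Sigma = \sum_j Eu_X(S_j)\cdot(\text{something})$; the honest mechanism is that $\ind_X=\sum_j Eu_X(S_j)\cdot(\ind_{\bar S_j}\text{-type correction})$ is NOT linear, so I fall back on: the Chern–Mather class of the cone and the relation $Eu_{C_{\bar S_j}}(0)=(-1)^{\dim V-1}c_M^{\bar S_j}(-1)$, combined with $c_M(\bar S_j)$ being expressible through $c_{sm}$ of the strata it contains via the inverse of the Euler-obstruction matrix. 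Plugging the identity $c_{sm}(\bar S_j)=\sum_{i:\,S_i\subseteq\bar S_j} Eu_X(S_i)\,c_M(\bar S_i)$ is the wrong direction; the right one, $c_M(\bar S_j)=\sum_{i} m_{ji}\, c_{sm}(\bar S_i)$ where $(m_{ji})$ is the inverse Euler-obstruction matrix, then telescopes when substituted into $c_{sm}(X)=\sum_j b_j c_M(\bar S_j)$, and the bookkeeping collapses to $\sm_X=\sum_i Eu_X(S_i)\,\sm_{S_i}$ after reading off that $b_j$ and $m_{ji}$ are mutually inverse triangular data encoding $Eu_X(S_i)$.

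The main obstacle is purely combinatorial: organizing the change of basis between $\{\ind_{S_i}\}$, $\{Eu_{\bar S_i}\}$, $\{c_M(\bar S_i)\}$ and $\{c_{sm}(\bar S_i)\}$ so that the alternating-degree evaluation at $H=-1$ turns the triangular inverse-matrix identities into the clean sum $\sum_i Eu_X(S_i)\,\sm_{S_i}$, rather than any sign or indexing mistake. I expect the cleanest writeup to go through $\sm$ additivity $\sm_X=\sum_i\sm_{S_i}$ (immediate from $c_*$ additivity on the disjoint stratification, since $\ind_X=\sum_i\ind_{S_i}$ in $F(X)$), and then a separate application of the cone formulas of Proposition~\ref{prop; MatherObstruction} to each closed stratum together with the standard inversion $Eu_X(S_i)=$ the matrix expressing $\ind_{S_i}$ in the $Eu$-basis; the identity then follows by evaluating the matrix identity at the cone point. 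I would double-check the base case where $X$ is smooth (so $X=S_0$, $Eu_X\equiv 1$, $\sm_X=\sm_{S_0}$, and the formula is a tautology) and one singular example (a cone over a smooth quadric) to validate signs before committing to the final indexing.
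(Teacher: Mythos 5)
Your proposal correctly pulls out the first ingredient --- $Eu_{\Sigma}(0)=(-1)^{\dim V-1}c_M^{X}(-1)$ from Proposition~\ref{prop; MatherObstruction} --- but the second step, which is where the whole content lies, is never actually executed. The paper's proof is two lines: since a stratification requires $Eu_X$ to be constant on each stratum, one has the tautological expansion $Eu_X=\sum_i Eu_X(S_i)\,\ind_{S_i}$ in $F(X)$; applying the MacPherson transformation $c_*$, which by construction sends $Eu_X\mapsto c_M(X)$ and $\ind_{S_i}\mapsto c_{sm}(S_i)$, gives $c_M^{X}(H)=\sum_i Eu_X(S_i)\,c_{sm}^{S_i}(H)$, and evaluating at $H=-1$ finishes. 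No inversion, M\"obius summation, or triangular linear algebra enters: the Euler-obstruction matrix appears directly, not its inverse.

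Your write-up instead circles this identity with base changes in both directions and gets the direction wrong at several points. The expansion you call ``the right one,'' $c_M(\bar S_j)=\sum_i m_{ji}\,c_{sm}(\bar S_i)$ with $(m_{ji})$ the \emph{inverse} Euler-obstruction matrix, is not correct: expanding $c_M(\bar S_j)$ against the $c_{sm}$ classes of the \emph{open} strata uses the Euler-obstruction matrix itself. Likewise ``the standard inversion $Eu_X(S_i)=$ the matrix expressing $\ind_{S_i}$ in the $Eu$-basis'' is backwards --- that matrix is the inverse of $[Eu_{\bar S_j}(S_i)]$. And the claimed collapse to $\sm_X=\sum_i Eu_X(S_i)\,\sm_{S_i}$ misstates the target: the left-hand side of the proposition is $Eu_{\Sigma}(0)=(-1)^{\dim V-1}c_M^{X}(-1)$, not $\sm_X=(-1)^{\dim V-1}c_{sm}^{X}(-1)$, and the two differ exactly by the Euler-obstruction weights. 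The additivity $\sm_X=\sum_i \sm_{S_i}$ you propose as the backbone is true but proves nothing here. To repair the argument, delete the inversion machinery and insert the single observation $Eu_X=\sum_i Eu_X(S_i)\,\ind_{S_i}$ followed by an application of $c_*$.
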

\begin{proof}
This follows directly from the definition of the natural transformation $c_*$:
\begin{align*}
Eu_{\Sigma}(0)=& (-1)^{\dim V-1} c_M^{X}(-1) \\
=& (-1)^{\dim V-1} \sum_{i} Eu_X(S_i)  c_{sm}^{S_i}(-1) \/.
\end{align*}
\end{proof}
More generally, let $\alpha$ be a constructible function on $\Sigma$, i.e., $\alpha=\sum_i a_i Eu_{\bar{V_i}}$. Then from previous Proposition we have
\[
\alpha(0)=\sum_{i} a_i\cdot \sm_{S_i} \/.
\]
As we will see, this is exactly \cite[Theorem 0.2]{Sch02} when restricted to base field $\CC$.

In the analytic setting, let $V$ be a $\CC$-vector space of dimension $n$.
Let $X\subset \PP(V)$ be a closed subvariety with a Whitney stratification $\cup_{i=1}^m S_i$. Let $V_i\subset V$ be the affine cones of  $S_i$, and let $C(X)$ be the affine cone over $X$. For each $V_i$ we consider the space $L_{l.t}(V_i):=V_i\cap B_\epsilon\cap l^{-1}(t)$. Here $l$ is a generic linear function $V \to \CC$, $B_\epsilon$ is the small ball centered at $0$ and $t$ is sufficiently close to $0$. The spaces $L_{l.t}(V_i)$ are homotopic equivalent for generic $l$ and $t$ small enough, and are
called the link space of $V_i$ to $0$. We show that the Euler characteristics of the link spaces are exactly  
our $c_{sm}$ invariants $\sm_{V_i}$.
\begin{prop}
\label{prop; BLS}
When the base field $k$ is $\CC$, 
for any $i\leq m$ we have
\[
\sm_{V_i} = \chi (V_i\cap B_\epsilon\cap l^{-1}(t))   \/.
\] 
In particular, we build the following affine-projective connection:
$$
\chi (C(X)\cap B_\epsilon\cap l^{-1}(t))= \chi(X)-\chi(X\cap H)
$$
Here $H$ is a generic hyperplane.
\end{prop}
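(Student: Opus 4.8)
The plan is to reduce this analytic statement to the algebraic identity already recorded as Proposition~\ref{prop; generalizedBLS} together with its refinement for constructible functions, and then to identify the link-space Euler characteristic with the coefficient extracted by $\sm$. First I would recall the local formula for the local Euler obstruction in the complex-analytic setting: for a Whitney stratified cone $\Sigma = \cup_i V_i$ with vertex $0$, the Brasselet--Lê--Seade formula (see \cite{BLS}, \cite{Sch02}) expresses the value at $0$ of any constructible function $\alpha = \sum_i a_i\, Eu_{\bar{V_i}}$ as a weighted sum $\alpha(0) = \sum_i a_i\, \chi(L_{l,t}(V_i))$ of the Euler characteristics of the complex link spaces $L_{l,t}(V_i) = V_i\cap B_\epsilon\cap l^{-1}(t)$. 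On the other hand, the algebraic computation in the excerpt (the remark following Proposition~\ref{prop; generalizedBLS}) gives $\alpha(0) = \sum_i a_i\, \sm_{V_i}$ for the \emph{same} constructible function. Both identities hold for the full family $\{Eu_{\bar{V_i}}\}$, which is a basis of $F(\Sigma)$; comparing coefficients one obtains $\sm_{V_i} = \chi(L_{l,t}(V_i))$ for each $i$.

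More concretely, the key steps in order are: (1) verify that when $k = \CC$ the algebraic local Euler obstruction of \cite{Gonzalez} used as our definition agrees with MacPherson's topological one (this is the content of \cite{BDK}/\cite{Gonzalez}), so that both the algebraic and the topological BLS-type formulas apply to the same function $Eu_\Sigma$; (2) apply Proposition~\ref{prop; generalizedBLS} to the constant function $\ind_\Sigma = \sum_i a_i\, Eu_{\bar{V_i}}$ (equivalently, note that the vertex cone $\Sigma$ is stratified by the $V_i$ together with $\{0\}$) to get $Eu_\Sigma(0) = \sum_i Eu_X(S_i)\,\sm_{V_i}$; (3) invoke the analytic Brasselet--Lê--Seade formula to get $Eu_\Sigma(0) = \sum_i Eu_X(S_i)\,\chi(L_{l,t}(V_i))$; (4) since these identities persist after replacing $\ind_\Sigma$ by an arbitrary $\alpha \in F(\Sigma)$, and since $\{Eu_{\bar V_i}\}$ forms a basis of $F(\Sigma)$, extract equality of the coefficients of each $a_i$, yielding $\sm_{V_i} = \chi(V_i\cap B_\epsilon\cap l^{-1}(t))$. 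Finally, the displayed affine-projective consequence is the special case $i$ corresponding to the open dense stratum: taking $V_i = C(X)_{sm}$ (or summing over all strata of $C(X)$) and using $\sm_{C(X)} = \chi(X) - \chi(X\cap H)$ from the definition of the $\sm$ invariant, one reads off $\chi(C(X)\cap B_\epsilon\cap l^{-1}(t)) = \chi(X) - \chi(X\cap H)$ for generic $H$.

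The step I expect to be the main obstacle is step (4): making the coefficient-comparison rigorous requires knowing that the two formulas---the algebraic one from Proposition~\ref{prop; generalizedBLS} and the analytic Brasselet--Lê--Seade one---are both \emph{linear} in the same way over the \emph{same} basis, and in particular that the generic-hyperplane-section operation underlying $\sm$ and the generic-linear-slice operation underlying the link space $L_{l,t}$ are compatible. One must check that a generic linear form $l$ on $V$ restricted near $0$ computes the same Euler characteristic as a generic hyperplane section of the projectivization, i.e. that $\chi(V_i\cap B_\epsilon\cap l^{-1}(t))$ depends only on the stratum and agrees with $\chi(S_i) - \chi(S_i\cap H)$; this is where Whitney conditions and the homotopy-invariance of the link for generic $(l,t)$ are used, and where one should cite \cite{Aluffi1} and \cite{Sch02} carefully rather than reprove the local conic structure. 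Once that compatibility is pinned down, the rest is bookkeeping with the basis $\{Eu_{\bar V_i}\}$ and an induction on the dimension of strata exactly as in the proof of Corollary~\ref{coro; generic}.
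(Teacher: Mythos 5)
Your proposal is correct and follows essentially the same route as the paper: both combine the analytic Brasselet--Lê--Seade formula for $Eu_{\bar V_k}(0)$ with the algebraic identity of Proposition~\ref{prop; generalizedBLS} (via Proposition~\ref{prop; MatherObstruction} and the product property $Eu_{\bar V_k}(V_i)=Eu_{\bar S_k}(S_i)$), and then extract $\sm_{V_i}=\chi(V_i\cap B_\epsilon\cap l^{-1}(t))$ by linear algebra over the basis $\{Eu_{\bar V_i}\}$ --- the paper makes your ``coefficient comparison'' rigorous as an upper-triangular system with $1$'s on the diagonal, which also dissolves the compatibility worry you raise in your last paragraph. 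The affine--projective statement is then obtained, as in your parenthetical, by summing over all strata and using additivity of $\chi$ and $c_{sm}$.
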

\begin{proof}
Note that for any $k\leq m$, the decomposition $\bar{V}_k=\cup_{i\geq k} V_i\cup \{0\}$ is a Whitney stratification of $\bar{V}_k$ such that $0\in \bar{V}_i$ for every $k$. 
We recall the   famous Brasselet-Lê-Seade formula in \cite{BLS}: 
\[
Eu_{\bar{V}_k}(0)=\sum_{i\geq k}  \chi (V_i\cap B_\epsilon\cap l^{-1}(t)) \cdot Eu_{\bar{V}_k}(V_i)  \/.
\]

Combine with previous corollary we then have
\begin{align*}
\sum_{i\geq k}^{m} Eu_{\bar{S}_k}(S_i) \cdot c_{sm}^{S_i}(-1)
=&  c_M^{\bar{S}_k}(-1) = (-1)^{n-1} Eu_{\bar{V}_k}(0) \\
=& (-1)^{n-1} \sum_{i\geq k}^{m}  \chi(V_i\cap B_\epsilon\cap l^{-1}(t)) \cdot Eu_{\bar{V}_k}(V_i) \/.
\end{align*}
For any point $x\in S_i$, notice that locally we can view $V_k$ as the product $ S_k\times \CC^*$. Thus the product property for the local Euler obstruction shows that:
$Eu_{\bar{V}_k}(V_i)=Eu_{\bar{S}_k}(S_i)$, and  thus
$$
\sum_{i\geq k}^{m} Eu_{\bar S_k}(S_i) \cdot (-1)^{n-1} \cdot c_{sm}^{S_i}(-1)=\sum_{i\geq k}^{m}  \chi (V_i\cap B_\epsilon\cap l^{-1}(t)) \cdot Eu_{\bar{S}_k}(S_i) \/. 
$$
This equation holds for every $k\in \{1,2,\cdots ,m\}$, thus we have the following linear system
$$
\sum_{i\geq k}^{m} Eu_{\bar{S}_k}(S_i) \cdot  \zeta_i =0;\quad k=1,2,\cdots m
$$
has $\zeta_i=\left( 
c_{sm}^{S_i}(-1)- (-1)^{n-1} \chi (V_i\cap B_\epsilon\cap l^{-1}(t))
\right) $ as a solution. However,  the matrix represents the linear system is upper triangular with $1$ on the diagonal, the solution must be the zero vector. 
This proves the first formula. The rest of the proposition follows from the inclusion-exclusion property of both $c_{sm}$ class and Euler characteristic:
\begin{align*}
 \chi (C(X)\cap B_\epsilon\cap l^{-1}(t))=& \sum_{i=1}^{m} \chi (V_i\cap B_\epsilon\cap l^{-1}(t)) 
 = \sum_{i=1}^{m} (-1)^{n-1} c_{sm}^{S_i}(-1)\\
 =&(-1)^{n-1} c_{sm}^{X}(-1) = \chi(X)-\chi(X\cap H) \/.
\end{align*}
The last equality comes from Proposition~\ref{prop; csm(-1)}
\end{proof}

\subsection{A Segre Computation}
We close this section with a  technical Lemma that will be used in later computations.
\begin{lemm}
\label{Lemma; Segre}
Let $E$ be a rank $e$ vector bundle on $X$, and let $\cL=\cO(1)$ be the tautological line bundle on $\PP=\PP(E)$. Let $p$ be the projection map from $\PP(E)$ to $X$.
Then we have
$$
p_*\left( \frac{c(E\otimes \cL)}{c(\cL)}\cap [\PP(E)] \right) = [X]- \frac{c_e(E^\vee)\cap [X]}{ c(E^\vee)}    \/.
$$
\end{lemm}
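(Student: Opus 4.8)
The plan is to compute the pushforward $p_*$ directly using the projection formula together with the well-known formula for the Segre classes of a projective bundle. First I would expand the numerator $c(E\otimes\cL)$ in terms of the Chern roots $a_1,\dots,a_e$ of $E$ and the hyperplane class $\zeta=c_1(\cL)$: writing $c(E\otimes\cL)=\prod_{i=1}^e(1+a_i+\zeta)$, the quotient $c(E\otimes\cL)/c(\cL)=\prod_i(1+a_i+\zeta)/(1+\zeta)$ is a polynomial in $\zeta$ of degree $\le e$ with coefficients pulled back from $X$. Since $p_*(\zeta^{j}\cap[\PP(E)])$ is (up to sign conventions) the Segre class $s_{j-e+1}(E)\cap[X]$ — with the convention $p_*\zeta^{e-1}=[X]$, $p_*\zeta^{j}=0$ for $j<e-1$ — the projection formula reduces everything to reading off these coefficients.

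The cleaner route, which I would actually carry out, is to recognize the generating-function identity behind the Segre class. Recall that $s(E)=\sum_j s_j(E)$ is characterized by $p_*\bigl(\tfrac{1}{1-\zeta}\cap[\PP(E)]\bigr)=$ (the total Segre class, suitably normalized), or more precisely that $p_*\bigl(\zeta^{e-1+k}\cap[\PP(E)]\bigr)=s_k(E)\cap[X]$. So I would rewrite $\prod_i(1+a_i+\zeta)$ by substituting and matching it against $\prod_i(1+a_i) \cdot \prod_i\bigl(1+\tfrac{\zeta}{1+a_i}\bigr)$, or better yet use the substitution trick: the class $\frac{c(E\otimes\cL)}{c(\cL)}$ evaluated against $[\PP(E)]$ and pushed forward is a specialization of the formula for $p_*$ of $c(\mathcal{Q})$-type classes. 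Concretely, I expect the computation to collapse to the statement that $p_*\bigl(\tfrac{c(E\otimes\cL)}{c(\cL)}\cap[\PP(E)]\bigr)$ equals the part of $\tfrac{1}{c(E^\vee)}\cdot(\text{something})$, and after bookkeeping one gets exactly $[X]-\tfrac{c_e(E^\vee)}{c(E^\vee)}\cap[X]$. A sign check: $c_e(E^\vee)=(-1)^e c_e(E)$ and $1/c(E^\vee)=s(E)$ up to signs on graded pieces, so the right-hand side is $[X]-(-1)^e c_e(E)\cdot s(E)\cap[X]$, and I would verify this matches the $\zeta$-coefficient extraction from the left-hand side degree by degree.

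The main obstacle I anticipate is purely organizational: keeping the sign conventions for Segre classes of a \emph{sub}bundle versus \emph{quotient} bundle straight, and being careful about whether $\PP(E)$ denotes the bundle of lines or of hyperplanes in $E$, since Lemma~\ref{Lemma; Segre} will be applied later to Nash-type bundles where $\cO(1)$ and the tautological sub/quotient play specific roles. I would nail this down by testing the formula on the rank-one case $e=1$: then $\PP(E)\cong X$, $\cL\cong E$, the left side is $p_*\bigl(\tfrac{c(E\otimes E)}{c(E)}\cap[X]\bigr)=p_*\bigl(\tfrac{1+2c_1(E)}{1+c_1(E)}\cap[X]\bigr)=[X]+c_1(E)\cap[X]+\cdots$, while the right side is $[X]-\tfrac{c_1(E^\vee)}{c(E^\vee)}\cap[X]=[X]-\tfrac{-c_1(E)}{1-c_1(E)}\cap[X]=[X]+c_1(E)\cap[X]+c_1(E)^2\cap[X]+\cdots$; agreement here fixes all conventions. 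Once the rank-one case checks out, the general case follows by the splitting principle: reduce to the case where $E$ splits as a sum of line bundles, expand both sides as symmetric functions in the Chern roots, and match. I would present the argument via the splitting principle plus the standard projective-bundle Segre formula rather than a brute-force coefficient chase, as that keeps the exposition short and the sign conventions transparent.
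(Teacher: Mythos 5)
Your overall strategy --- expand $c(E\otimes\cL)$ in terms of the Chern roots of $E$ and the class $\zeta=c_1(\cL)$, divide by $1+\zeta$, and push forward term by term using $p_*(\zeta^{e-1+k}\cap[\PP(E)])=s_k(E)\cap[X]$ --- is exactly the paper's proof (the paper uses the closed tensor formula $c(E\otimes\cL)=\sum_k c_{e-k}(E)(1+\zeta)^k$ rather than the splitting principle, but these are the same computation). However, two points in your write-up are genuinely wrong and would derail the "bookkeeping" you defer. First, the quotient $c(E\otimes\cL)/c(\cL)$ is \emph{not} a polynomial of degree $\le e$ in $\zeta$: after writing the numerator as $\sum_k c_{e-k}(E)(1+\zeta)^k$ and dividing by $1+\zeta$, the $k=0$ term leaves $c_e(E)/(1+\zeta)=c_e(E)\sum_{j\ge 0}(-\zeta)^j$, whose arbitrarily high powers of $\zeta$ are precisely what produce the full series $s(E^\vee)=1/c(E^\vee)$ on the right-hand side. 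If the quotient really had $\zeta$-degree $\le e$, the pushforward could only involve $s_0$ and $s_1$, and the identity would be false beyond codimension one. The correct bookkeeping is: the polynomial part $\sum_{k=1}^{e}c_{e-k}(E)(1+\zeta)^{k-1}$ pushes forward to $s_0(E)\cap[X]=[X]$, and $p_*\bigl(c_e(E)\sum_j(-\zeta)^j\bigr)=(-1)^{e-1}c_e(E)\,s(E^\vee)\cap[X]=-c_e(E^\vee)s(E^\vee)\cap[X]$.

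Second, your rank-one sanity check is carried out with the wrong convention and does not actually confirm anything: under the convention for which $p_*(\zeta^{e-1+k})=s_k(E)$ (lines in $E$, $\cO(1)=\cO(-1)^\vee$), the rank-one case gives $\cL\cong E^\vee$, not $\cL\cong E$. With your identification $\cL\cong E$ the left side is $\frac{1+2c_1(E)}{1+c_1(E)}=1+c_1(E)-c_1(E)^2+\cdots$, which disagrees with the right side $1+c_1(E)+c_1(E)^2+\cdots$ already in codimension two; you only compared the codimension-one terms, where the discrepancy is invisible. With $\cL\cong E^\vee$ the left side is $c(\cO)/c(E^\vee)=1/(1-c_1(E))$ and the two sides match identically. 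Since the stated purpose of your check was to ``fix all conventions,'' this needs to be corrected before the splitting-principle argument can be trusted.
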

\begin{proof}
Following the tensor formula \cite[Example 3.2.2]{INT} we have
\begin{align*}
c(E\otimes \cL)=& \sum_{k=0}^e \left(\sum_{i=0}^k \binom{e-i}{k-i} c_i(E)\cdot c_1(\cL)^{k-i}\right) \\
=& \sum_{k=0}^{e} \left(\sum_{i=0}^{e-k} \binom{e-k}{i} c_1(\cL)^i  \right)\cdot c_k(E) \\
=& \sum_{k=0}^{e}  c_k(E)\cdot (1+c_1(\cL))^{e-k}\\
=& \sum_{k=0}^{e}  c_{e-k}(E)\cdot (1+c_1(\cL))^{k} \/;
\end{align*}
Thus we have
\begin{align*}
\frac{c(E\otimes \cL)}{c(\cL)}=&  \frac{\sum_{k=0}^{e}  c_{e-k}(E)\cdot (1+c_1(\cL))^{k}}{1+c_1(\cL)} \\
=& \frac{c_e(E)}{1+c_1(\cL)}+ \sum_{k=1}^{e} c_{e-k}(E)\cdot (1+c_1(\cL))^{k-1} \/; 
\end{align*}
Recall the definition of Segre class: 
$$
p_*(c_1(\cL)^{e+i}\cap [\PP(E)])=s_{i+1}(E)\cap [X] \/.
$$
The pushforward then becomes
\begin{align*}
p_*\left( \frac{c(E\otimes \cL)}{c(\cL)}\cap [\PP(E)] \right)
=& p_*(\frac{c_e(E)}{1+c_1(\cL)}\cap [\PP(E)] )+  s_0(E)\cap [X] \\
=& (-1)^{e-1}s(E^\vee)c_e(E)\cap [X]+[X] \\
=& [X]-s(E^\vee)c_e(E^\vee)\cap [X] \/.
\end{align*}
\end{proof}

\section{Local Euler Obstruction of Recursive Group Orbits}
\label{S; EulerGeneral}
Now we consider the following situation: 
for each $n\geq 1$, the group $G_n$ is a connected linear algebraic group and $G_n$ acts algebraically on  a vector space $V_n$ of dimension $l_n$. 
Assume that for every such $n$ there are exactly $n$ orbits. 
\begin{assu}
\label{assum; orbits}
We assume the following
\begin{enumerate}
\item All actions contain sub-actions by $k^*$ multiplications. Thus the orbits are necessarily cones.
\item The orbits can be labeled by  $\cO_{n,0}, \cO_{n,1}, \cdots ,\cO_{n,n}$  such that $\cO_{n,j}\subset \bar{\cO}_{n,i}$ whenever $j>i$. Thus $\cO_{n,0}$ is the largest orbit and $\cO_{n,n}=\{0\}$. 
\item The largest strata $\cO_{n,0}$ is dense in $V_n$.
\item For every $n$, $k$, and for any point $p\in \cO_{n,r}$, the transverse normal slice pair $(N_p\cap \cO_{n,k}, p)$ is isomorphic to $(\cO_{r,k}, 0)$. Thus we have local product structure $N_p\times \cO_{r,k}\sim \cO_{n,k}$. 
\end{enumerate}
\end{assu}
\begin{exam}
Let $G_n=GL_n\times GL_n$ act on  $V_n=k^{n}\otimes k^n$, the space of $n\times n$ matrices by $A\cdot X\cdot B$. The orbits are matrices of fixed corank $k$, denoted by $\Sigma_{n,k}$. 
Then  for each point $p$ in $\Sigma_{n,j}^\circ \subset \Sigma_{n,k}$, the normal slice $N_p$ intersect with $\Sigma_{n,k}$ at $p$. Moreover we have $N_p\cap \Sigma_{n,k}=\Sigma_{j,k}$, and  isomorphism $\Sigma_{j,k}\times k^N \cong \Sigma_{n,k}$. Here $k^N$ is centered at $p$ of complementary dimension. Similar arguments for symmetric matrices also give such sequence of actions.

For skew-symmetric matrices, we consider the sequence of $G_n=GL_{2n}$ actions on $V_n=\wedge^2 k^{2n}$ and the sequence of $G_n=GL_{2n+1}$ action on $V_n=\wedge^2 k^{2n+1}$ separately. Each of them form a recursive group actions.
\end{exam}

\begin{rema}
We can loose the assumption as follows. In fact we don't need all the orbits to match the assumption, among all the orbits we only concentrate a subset, i.e., a flag of them. Thus we can only assume the following:
For each $n$, there is a maximal flag of  orbits  labeled by  $\cO_{n,0}, \cO_{n,1}, \cdots ,\cO_{n,n}$  such that $\cO_{n,j}\subset \bar{\cO}_{n,i}$ whenever $j>i$, and $\bar{\cO}_{n,0}=\cup_{i=0}^n \cO_{n,i} $. Here  $\cO_{n,0}$ is the largest orbit. We only require the sequence of flags to satisfy the rest of Assumption~\ref{assum; orbits}, except that we want $\cO_{n,0}$ to be dense in its closure, not $V$. 
\end{rema}

\begin{theo}[Local Euler Obstructions and $c_{sm}$ Invariants]
\label{theo; EulerObstruction}
Assume that we have a sequence of group actions as in Assumption~\ref{assum; orbits}. For any $1\leq k\leq n-1$ we denote $S_{n,k}=\PP(\cO_{n,k})\subset \PP(V_n)$ to be the projectivizations. Denote the $c_{sm}$ invariants of $\cO_{n,k}$ (or $S_{n,k}$) by $\sm_{n,i}$, i.e., 
\[
\sm_{n,i}:=(-1)^{l_n-1} c_{sm}^{S_{n,i}}(-1)
\]
Then we have the following equivalence:
\[
\{\sm_{r,k}|k,r\} 
 \xlongleftrightarrow{\text{equivalent}} 
	 \{e_{k,r}|k,r\}
\/.
\]
Here by equivalence we mean that they can be derived from each other. 
Moreover, the local Euler obstructions $Eu_{\cO_{n,r}}(\cO_{n,k})$ can be computed as
\[
Eu_{\bar{\cO}_{n,k}}(\cO_{n,r})=Eu_{\cO_{r,k}}(0)=\sum_{(\mu_1>\mu_2>\cdots >\mu_l\geq k)} \sm_{n\mu_1}\cdot \sm_{\mu_1\mu_2}\cdots\cdot \sm_{\mu_l k}
\]
Here the sum is over all   (partial and full) flags $(\mu_1>\mu_2>\cdots >\mu_l)$ such that $\mu_i-1=\mu_{i+1}$ for every $i$ and $\mu_l\geq k$.
\end{theo}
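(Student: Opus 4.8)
\emph{Proof plan.} The idea is to reduce the whole statement to the value of the local Euler obstruction at the cone point, extract from it a unipotent triangular linear system via the generalized Brasselet--L\^e--Seade formula (Proposition~\ref{prop; generalizedBLS}), and then invert that system. Since $0\notin\cO_{r,k}$ when $k<r$, I read $Eu_{\cO_{r,k}}(0)$ as $Eu_{\bar{\cO}_{r,k}}(0)$, and abbreviate this number by $e_{r,k}$, adopting the conventions $e_{k,k}=1$ (the vertex is a smooth point of $\{0\}$) and $e_{r,k}=0$ for $r<k$.

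First I would establish the identity $Eu_{\bar{\cO}_{n,k}}(\cO_{n,r})=e_{r,k}$. Fix a point $p\in\cO_{n,r}$. By Assumption~\ref{assum; orbits}(4) the transverse normal slice at $p$ gives, near $p$, a local isomorphism of $\bar{\cO}_{n,k}$ with a product $U\times\bar{\cO}_{r,k}$ in which $U$ is smooth (a neighbourhood of $p$ in $\cO_{n,r}$) and $p$ corresponds to $(p,0)$; here one uses that the transverse slice commutes with passing from an orbit to its closure. Because $Eu$ is a local invariant, is constant along the orbit $\cO_{n,r}$, and is multiplicative under products, this yields
\[
Eu_{\bar{\cO}_{n,k}}(\cO_{n,r})=Eu_{\bar{\cO}_{n,k}}(p)=Eu_{U}(p)\cdot Eu_{\bar{\cO}_{r,k}}(0)=e_{r,k}.
\]

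Next I would produce the recursion. An orbit closure is a union of orbits, so Assumption~\ref{assum; orbits}(2) forces $\bar{\cO}_{r,k}=\bigsqcup_{i=k}^{r}\cO_{r,i}$; hence $\bar{S}_{r,k}=\PP(\bar{\cO}_{r,k})$ is a projective variety stratified by the $S_{r,i}=\PP(\cO_{r,i})$, $k\le i\le r-1$, whose affine cone is $\bar{\cO}_{r,k}$. Applying Proposition~\ref{prop; generalizedBLS} to $\bar{S}_{r,k}$ gives
\[
e_{r,k}=Eu_{\bar{\cO}_{r,k}}(0)=\sum_{i=k}^{r-1}Eu_{\bar{S}_{r,k}}(S_{r,i})\cdot\sm_{r,i}.
\]
Away from its vertex the cone $\bar{\cO}_{r,k}$ is, \'etale-locally, the product of $\bar{S}_{r,k}$ with a one-dimensional torus, so multiplicativity of $Eu$ gives $Eu_{\bar{S}_{r,k}}(S_{r,i})=Eu_{\bar{\cO}_{r,k}}(\cO_{r,i})$, and by the previous step this equals $e_{i,k}$. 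Thus $e_{r,k}=\sum_{i=k}^{r-1}\sm_{r,i}\,e_{i,k}$ with $e_{k,k}=1$. Writing $N=[\sm_{r,i}]$ (strictly lower triangular) and $E=[e_{r,k}]$ (lower triangular), this reads $(I-N)E=I$, so $E$ and $I-N$ are mutually inverse unipotent lower triangular matrices; the family $\{\sm_{r,i}\}$ and the family of local Euler obstructions $\{e_{r,k}\}=\{Eu_{\bar{\cO}_{n,k}}(\cO_{n,r})\}$ therefore determine one another by triangular back-substitution, which is the asserted equivalence. Expanding $E=(I-N)^{-1}=\sum_{l\ge0}N^{l}$ and reading off the $(r,k)$ entry writes $e_{r,k}$ as a sum of products $\sm_{\mu_0\mu_1}\sm_{\mu_1\mu_2}\cdots\sm_{\mu_{l-1}\mu_l}$ over decreasing flags $r=\mu_0>\mu_1>\cdots>\mu_l=k$; an elementary induction on $r-k$ then puts this in the form displayed in the statement.

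The step I expect to be the main obstacle is the reduction in the second paragraph: one must verify that the local product structure of Assumption~\ref{assum; orbits}(4) is genuinely compatible with passing from the orbits to their closures (transversality of the normal slice), and that the locality and product properties of the local Euler obstruction --- which in this algebraic setting must be understood locally in the \'etale or formal topology --- really do apply to the slice at a point of a proper stratum. Once this and the identification $\bar{\cO}_{r,k}=\bigsqcup_{i\ge k}\cO_{r,i}$ are secured, the remainder is the purely formal inversion of a unipotent triangular system, with Proposition~\ref{prop; generalizedBLS} supplying the geometric input.
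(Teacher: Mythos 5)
Your proposal is correct and follows essentially the same route as the paper: the normal-slice product property gives the independence of $n$, Proposition~\ref{prop; generalizedBLS} (equivalently Proposition~\ref{prop; MatherObstruction}) yields the unipotent triangular recursion $e_{r,k}=\sum_{i=k}^{r-1}\sm_{r,i}\,e_{i,k}$ with $e_{k,k}=1$, and unrolling it gives the flag sum. Your packaging of the two-way equivalence as inversion of the unipotent matrix $(I-N)E=I$ is a slightly cleaner way of saying what the paper does via the base-change coefficients between $\ind$ and $Eu$, but it is the same argument.
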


\begin{proof}
Recall the product property of local Euler obstructions, then the normal slice assumption
(Assumption I $(4)$) shows that  
$Eu_{\bar{\cO}_{n,k}}(\cO_{n,r})=Eu_{\bar{\cO}_{r,k}} (\cO_{r,r})$   is independent of $n$.  We denote this number by $e_{k,r}$. 

Since $G_n$ acts linearly on $V_n$, the orbits $V_n=\cup_{i=0}^{n} \cO_{n,i} $ form a stratification of $V_n$  such that $0\in \bar{\cO}_{n,k}$ for every $k$.
Proposition~\ref{prop; MatherObstruction} shows that for any $k=1, \cdots , n-1$ we have
\[
e_{k,n}:=Eu_{\bar{\cO}_{n,k}}(0)=(-1)^{l_n-1}c_M^{\bar{S}_{n,k}}(-1) =(-1)^{l_n-1}\sum_{i=k}^{n-1} Eu_{\bar{S}_{n,k}}(S_{n,i})c_{sm}^{S_{n,i}}(-1)= \sum_{i=k}^{n-1} \sm_{n,i}\cdot e_{ki} 
\]
This formula applies to all $n$ and $k$,  thus we obtain a recursive formula. 
On the initial case $k=n$, notice that $\bar{\cO}_{k,k}=\{0\}$ and thus
$
e_{kk}=Eu_{\bar{\cO}_{k,k}}(0)=1 .
$
Then we have
\begin{align*}
e_{k,n}=& \sum_{i=k}^{n-1} \sm_{n,i}\cdot e_{ki} 
= \sum_{i=k}^{n-1} \sm_{n,i} \sum_{j=k}^{i-1} \sm_{i,j} e_{ij} \\
=& \quad \cdots \cdots\\
=& \sum_{(\mu_1>\mu_2>\cdots >\mu_l\geq k)} \sm_{m,\mu_1}\cdot \sm_{\mu_1,\mu_2}\cdots\cdot \sm_{\mu_l, k}
\end{align*}
The sum is over all the flags $(\mu_1>\mu_2>\cdots >\mu_l)$ such that $\mu_i-1=\mu_{i+1}$ for every $i$ and $\mu_l\geq k$.

Now we show that the local Euler obstructions recover the $c_{sm}$ invariants.  By definition we have  
\begin{align*}
\sm_{n,k}=& (-1)^{l_n-1}c_{sm}^{S_{n,k}}(-1)=(-1)^{l_n-1} \sum_{r=k}^{n-1} a^n_r c_{M}^{S_{n,r}}(-1) \\
=&\sum_{r=k}^{n-1} a^n_r Eu_{\bar{\cO}_{r,k}}(0)=\sum_{r=k}^{n-1} a^n_r e_{k,r} \/.
\end{align*}
Here the $a^n_r$ are the base change coefficients between $\ind_{\PP(\cO_{n,r})}$ and $Eu_{\PP(\cO_{n,r})}$, which are completely determined by $\{e_{r,n}\}$. This completes the proof.
\end{proof}

\begin{rema}
When the base field is $\CC$, this actually correspond to \cite[Definition 4,1,36]{Dimca} via Proposition~\ref{prop; BLS}. The fact that the recursive definition using Kashiwara's local Euler characteristics is equivalent to MacPherson's orbstruction theoretic definition are equivalent was proved in \cite{BDK}. Thus this theorem generalizes the equivalence 
from $\CC$ to arbitrary algebraically closed field of characteristic $0$, using Gonz\'alez-Sprinberg's algebraic formula for local Euler obstructions and our $c_{sm}$ invariants for local Euler characteristics.
\end{rema}

\section{Sectional Euler Characteristic of Group Orbits}
\label{S; SecEulerGeneral}
In this section we consider the following situation. 
\begin{assu}
\label{assum; orbits2}
Let $G$ be a connected linear algebraic group acting on $V$ with finitely many orbits. We label them by  $\cO_0, \cO_1, \cdots ,\cO_n$ such that $\cO_i\not \subset \bar{\cO}_j$ whenever $j>i$. As shown in \cite{Tevelev}, the dual action of $G$ induces $n$ orbits in $V^*$. 
We can label them such that  $\PP(\bar{\cO}_i)^\vee=\PP(\bar\cO'_{n-i})$ for $i\geq 1$.  Then we also have $\cO'_j\not \subset \bar{\cO'}_i$ whenever $i>j$. Thus the dense open orbits by $\cO_0$ and $\cO'_0$, and we have $\cO_n=\cO'_n=\{0\}$. Let $d_i$ and $d'_i$ denote the dimensions of $\cO_i$ and $\cO'_i$ respectively. We have $d_0=d'_0=\dim V=N$.
\end{assu}

\begin{defi}
Let $S_i$ and $S'_i$ be the projectivizations of $\cO_i$ and $\cO'_i$. 
For any   $i,j$ in $\{1,\cdots ,n-1\}$   we define the following numerical indices
$$
e_{i,j}:=Eu_{\bar{S}_i}(p);\quad  \chi_{i,j}:=\chi(S_i\cap L_j)
$$
for any point $p\in S_j$, and $L_j\in \cO'_j$. 
Symmetrically, we define 
$$
e'_{i,j}:=Eu_{\bar{S}'_i}(p);\quad  \chi'_{i,j}:=\chi(S'_i\cap L'_j)
$$
for any point $p\in S'_j$, and $L'_j\in S_j$.
 Let $E=[e_{i,j}]$ and $E'=[e'_{i,j}]$ be the $n-1\times n-1$ matrices.
\end{defi}

Knowing the local Euler obstructions of the orbits  lead us to the information of the sectional Euler characteristics of the dual orbits: 
\begin{theo}
\label{theo; sectionalEuler}
Assume that a $G$ representation $V$ satisfies Assumption~\ref{assum; orbits2}. 
Let $A=[\alpha_{i,j}]_{n-1\times n-1}$ and $A'=[\alpha'_{i,j}]_{n-1\times n-1}$  be the inverse matrix of  $E$ and $E'$ respectively, then
we have
\begin{align*}
\chi_{k,r}=&
\sum_{i=r}^{n-1}\alpha_{k,i}\cdot (-1)^{d_i+d'_{n-i}+N} e'_{n-i,r}+\chi(S_k)-\sm(S_k) \/; \\
\chi'_{k,r}=&
\sum_{i=r}^{n-1}\alpha'_{k,i}\cdot (-1)^{d_i+d'_{n-i}+N} e_{n-i,r}+\chi(S'_k)-\sm(S'_k) \/.
\end{align*}
\end{theo}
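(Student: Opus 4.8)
The plan is to combine Ernstr\"om's Radon transform theorem (Theorem~\ref{theo: Radon}) with the projective duality relating the orbits and their duals, then invert the matrix of local Euler obstructions. First I would fix $k$ and consider the constructible function $\ind_{S_k}$ on $\bar S_k$. By the definition of the indices $e_{i,j}$, the basis change formula gives $\ind_{S_k} = \sum_i \beta_{k,i}\, Eu_{\bar S_i}$, where the coefficients $\beta_{k,i}$ are entries of the inverse matrix $A=[\alpha_{k,i}]$ of $E$ (up to the standard upper-triangular normalization, using $e_{k,k}=1$). Applying the topological Radon transform $\lambda\mapsto q_*p^*\lambda$ and evaluating at a hyperplane $L_r\in \cO'_r$ turns the left side into $\chi(S_k\cap L_r)=\chi_{k,r}$ plus the contribution of the closure points, so I need to track carefully how $\ind_{S_k}$ versus $\ind_{\bar S_k}$ behave under Radon — this is where the correction term $\chi(S_k)-\sm(S_k)$ will enter, via Proposition~\ref{prop; csm(-1)} identifying $\sm(S_k)$ with $\chi(S_k)-\chi(S_k\cap H)$ for generic $H$.

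Next I would compute the right-hand side of the Radon transform termwise. For each $Eu_{\bar S_i}$, Theorem~\ref{theo: Radon} gives $(Eu_{\bar S_i})^\vee = (-1)^{d_i+(N-1)-d'_{n-i}} Eu_{\bar S_i^\vee} + e_{\bar S_i}\ind_{\PP(V^*)}$, and by Assumption~\ref{assum; orbits2} we have $\bar S_i^\vee = \bar S'_{n-i}$, so $Eu_{\bar S_i^\vee}$ evaluated at $L_r\in \cO'_r$ is exactly $e'_{n-i,r}$. Collecting the sign $(-1)^{d_i+N-1-d'_{n-i}}$ and matching it with the stated $(-1)^{d_i+d'_{n-i}+N}$ (which agree mod $2$ since $d'_{n-i}\equiv -d'_{n-i}$ and $N-1\equiv N+1$), the main sum $\sum_{i=r}^{n-1}\alpha_{k,i}(-1)^{d_i+d'_{n-i}+N}e'_{n-i,r}$ emerges. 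The range of summation starting at $r$ reflects that $e'_{n-i,r}=0$ unless $\cO'_r\subset \bar\cO'_{n-i}$, i.e. $n-i\le n-r$, i.e. $i\ge r$. The leftover $\ind_{\PP(V^*)}$ terms, weighted by $\sum_i\beta_{k,i}\,e_{\bar S_i}$, should reassemble (again by the definition of $c_{sm}$ and Ernstr\"om's remark on the generic value $e_X=\sum e_i\chi(W_i\cap H)$) into the generic sectional Euler characteristic $\chi(S_k\cap H)=\chi(S_k)-\sm(S_k)$, which is the constant correction term; this is the bookkeeping step I expect to be the main obstacle, since one must keep straight which evaluations are at special $L_r$ versus generic $H$ and ensure the constant term does not pick up spurious $r$-dependence.

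Finally, the second identity for $\chi'_{k,r}$ follows by the identical argument with the roles of $V$ and $V^*$, $\cO$ and $\cO'$, $E$ and $E'$ interchanged, using that double projective duality is the identity ($\bar S'^\vee_{n-i}=\bar S_i$) so the construction is fully symmetric. I would present the computation for $\chi_{k,r}$ in detail and remark that $\chi'_{k,r}$ is obtained by symmetry. One technical point worth isolating as a preliminary lemma: the precise relation between the indicator $\ind_{S_k}$ on the \emph{open} stratum and the Euler-obstruction basis $\{Eu_{\bar S_i}\}_{i\ge k}$, together with the fact that the transition matrix is upper triangular with unit diagonal, so that inverting $E$ genuinely produces the coefficients $\alpha_{k,i}$ appearing in the statement; this also explains why only indices $i\ge k$ (equivalently $i$ with $S_k\subset\bar S_i$) contribute, matching the matrix sizes $n-1\times n-1$.
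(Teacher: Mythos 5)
Your proposal is correct and follows essentially the same route as the paper: the paper applies Ernstr\"om's theorem to each $Eu_{\bar S_k}$, assembles the identities into the triangular system $E\cdot[\chi_{i,s}-\chi_{i,0}]=[\hat e']$ and then inverts $E$, whereas you pre-invert by expanding $\ind_{S_k}=\sum_i\alpha_{k,i}Eu_{\bar S_i}$ before applying the Radon transform --- the same computation in the opposite order, and the constant term does reassemble as $\sum_i\alpha_{k,i}e_{\bar S_i}=\sum_j(AE)_{k,j}\chi_{j,0}=\chi_{k,0}=\chi(S_k)-\sm(S_k)$ exactly as you anticipate. One bookkeeping correction: in Ernstr\"om's formula you must use the \emph{projective} dimensions $\dim \bar S_i=d_i-1$, $\dim \bar S'_{n-i}=d'_{n-i}-1$ and $\dim V=N$ (so Ernstr\"om's ``$N$'' is $N-1$), giving exponent $(d_i-1)+(N-3)-(d'_{n-i}-1)\equiv d_i+d'_{n-i}+N\pmod 2$ as in the statement; the parity argument as you wrote it ($d_i+(N-1)-d'_{n-i}$ versus $d_i+d'_{n-i}+N$) actually differs by $1$ mod $2$, so the agreement comes from correcting the dimensions, not from that congruence.
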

\begin{proof}
Notice that the matrix $E=[e_{i,j}]$ is upper triangular with $1$ on the diagonal, thus it is invertible with inverse matrix $A$ also being upper triangular. This shows that $\alpha_{i,j}=0$ whenever $i>j$. 
As pointed out in Corollary~\ref{coro; generic}, for any $S_i$ we have $\chi_{i,0}=\chi(S_i \cap H)$ and $\chi'_{i,0}=\chi(S'_i \cap H)$ equal the generic sectional Euler characteristic.
Thus recall Theorem~\ref{theo: Radon},   for any $l_s\in S'_s$  we have
$$
\sum_{i\geq k}^{n-1} e_{ki}(\chi_{is}-\chi_{i0}) =  (-1)^{d_k+d'_{n-k}+N} e'_{ks}
$$
Thus written in matrix form we have
\[
 E
\cdot 
\left(
\begin{array}{cccc}
\chi_{1,1}-\chi_{1,0} & \chi_{12}-\chi_{1,0} & \cdots & \chi_{1,n-1}-\chi_{1,0}   \\
\chi_{2,1}-\chi_{2,0} & \chi_{22}-\chi_{2,0} &\cdots  & \chi_{2,n-1}-\chi_{2,0}  \\
\cdots & \cdots & \cdots  & \cdots  \\
\chi_{n-1,1}-\chi_{n-1,0} & \chi_{n-1,2}-\chi_{n-1,0} & \cdots & \chi_{n-1,n-1} -\chi_{n-1,0}
\end{array}
\right)
=
\left(
\begin{array}{cccc}
0 & 0 & \cdots &  \hat{e}'_{n-1,n-1}  \\
0 & \cdots  &  \hat{e}'_{n-2,n-2} &   \hat{e}'_{n-2,n-1}  \\
\cdots & \cdots & \cdots  & \cdots  \\
\hat{e}'_{1,1}   & \hat{e}'_{1,2} & \cdots & \hat{e}'_{1,n-1}  
\end{array}
\right) . 
\]  
Here $\hat{e}'_{i,j}=(-1)^{d_i+d'_{n-i}+N}e'_{i,j}$.
Thus we have
\[
\chi_{k,r}-\chi_{k,0}=\sum_{i=r}^{n-1}\alpha_{k,i}\cdot (-1)^{d_i+d'_{n-i}+N}e'_{n-i,r} \/.
\]
Recall from Proposition~\ref{prop; csm(-1)} shows $\chi_{k,0}=\chi(S_k\cap H)=\chi(S_k)-\sm(S_k)$. Then we have
\begin{align*}
\chi_{k,r}=&
\sum_{i=r}^{n-1}\alpha_{k,i}\cdot (-1)^{d_i+d'_{n-i}+N} e'_{n-i,r}+\chi(S_k)-\sm(S_k) \/.
\end{align*}
The other formula is obtained directly by symmetry.
\end{proof}

In fact, as shown in the following proposition, the dual sectional Euler characteristics in $\PP(V^*)$ is principally the same with the ones in $\PP(V)$. 
\begin{prop}
The following two matrices are inverse to each other.
\[
\left(
\begin{array}{ccc}
\chi_{1,1}-\chi_{1,0} &   \cdots & \chi_{1,n-1}-\chi_{1,0}   \\
\chi_{2,1}-\chi_{2,0} &\cdots  & \chi_{2,n-1}-\chi_{2,0}  \\
\cdots & \cdots & \cdots  \\
\chi_{n-1,1}-\chi_{n-1,0} & \cdots & \chi_{n-1,n-1} -\chi_{n-1,0}
\end{array}
\right)^{-1}
=
\left(
\begin{array}{ccc}
\chi'_{1,1}-\chi'_{1,0}  & \cdots & \chi'_{1,n-1}-\chi'_{1,0}   \\
\chi'_{2,1}-\chi'_{2,0} &\cdots  & \chi'_{2,n-1}-\chi'_{2,0}  \\
\cdots & \cdots  & \cdots  \\
\chi'_{n-1,1}-\chi'_{n-1,0}  & \cdots & \chi'_{n-1,n-1} -\chi'_{n-1,0}
\end{array}
\right)
\/. 
\]
\end{prop}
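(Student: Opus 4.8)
The plan is to extract two matrix identities from the proof of Theorem~\ref{theo; sectionalEuler} and multiply them. Write $M=[\chi_{i,j}-\chi_{i,0}]_{n-1\times n-1}$ and $M'=[\chi'_{i,j}-\chi'_{i,0}]_{n-1\times n-1}$ for the two matrices in the statement. Let $J$ be the $(n-1)\times(n-1)$ reversal matrix, $J_{i,j}=1$ exactly when $i+j=n$ (so $J^2=I$ and $(JX)_{i,j}=X_{n-i,j}$), and let $D$ be the diagonal sign matrix with $D_{i,i}=(-1)^{d_i+d'_{n-i}+N}$, so that $D^2=I$. Inspecting the displayed matrix equation in the proof of Theorem~\ref{theo; sectionalEuler}, its right-hand side has $(k,r)$-entry $\hat e'_{n-k,r}=(-1)^{d_{n-k}+d'_{k}+N}e'_{n-k,r}$, the entries strictly below the anti-diagonal vanishing automatically because $E'$ is upper triangular; this is exactly $(JDE')_{k,r}$. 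Hence that identity reads $EM=JDE'$, and since $E$ is upper triangular with $1$'s on the diagonal it is invertible with inverse $A$, giving $M=A\,JD\,E'$.

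Next I would use the symmetric half of the theorem. The key point is that the formula for $\chi'_{k,r}$ carries the \emph{same} sign factor $(-1)^{d_i+d'_{n-i}+N}$ indexed by the summation variable $i$; rewriting it in matrix form with $e_{n-i,r}=(JE)_{i,r}$ yields $M'=A'\,DJ\,E$ with $A'=(E')^{-1}$ and the \emph{same} diagonal matrix $D$. This coincidence of the sign matrix is the crux: the two copies of $D$ end up adjacent in the product $MM'$ and cancel via $D^2=I$.

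The computation is then a one-liner:
\[
MM'=(A\,JD\,E')(A'\,DJ\,E)=A\,J\,D\,(E'A')\,D\,J\,E=A\,J\,D^2\,J\,E=A\,J^2\,E=AE=I,
\]
using $E'A'=I$, $D^2=I$, $J^2=I$ and $AE=I$. Since $M$ and $M'$ are square of the same size, $MM'=I$ forces $M'M=I$, so the two matrices are inverse to each other. There is essentially no obstacle beyond the bookkeeping in the first paragraph: one must check that the right-hand side of the cited displayed identity is literally $JDE'$ with $D$ depending only on the row index (so that it factors out as a left diagonal matrix), after which the cancellation is automatic.
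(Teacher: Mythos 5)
Your overall strategy is the same as the paper's: extract the two Ernstr\"om/Radon matrix identities, multiply them, and let the diagonal sign matrices cancel; the conclusion $MM'=I$ is correct. However, the step you yourself identify as the crux --- that both identities carry the \emph{same} sign matrix $D$ with $D_{ii}=(-1)^{d_i+d'_{n-i}+N}$ --- is not justified, and as written each of your two premises is off by a conjugation by $J$. Deriving the identities directly from Theorem~\ref{theo: Radon} applied to $\bar S_k$ (whose dual is $\bar S'_{n-k}$) and to $\bar S'_k$ (whose dual is $\bar S_{n-k}$) gives
\[
(EM)_{k,s}=(-1)^{d_k+d'_{n-k}+N}\,e'_{n-k,s},
\qquad
(E'M')_{k,s}=(-1)^{d'_k+d_{n-k}+N}\,e_{n-k,s},
\]
that is, $EM=DJE'$ (sign indexed by the row index $k$, not by $n-k$ as in your $JDE'$) and $E'M'=D'JE$ with $D'_{kk}=(-1)^{d'_k+d_{n-k}+N}$. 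Passing to the dual side replaces the exponent $d_i+d'_{n-i}$ by $d'_i+d_{n-i}$, and these agree only after the re-indexing $i\mapsto n-i$; the true relation is therefore $D'=JDJ$, not $D'=D$. Reading ``the same sign factor'' off the second displayed formula in Theorem~\ref{theo; sectionalEuler} is not a safe justification here, since that formula (and the displayed matrix equation in its proof, which is inconsistent with the in-text equation just above it) appears to carry exactly this prime-swapping slip.

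The repair is short and is precisely the parity observation closing the paper's own proof: since $d_k+d'_{n-k}=d'_{n-k}+d_k$ one has $JD'J=D$, hence
\[
MM'=(A\,DJ\,E')(A'\,D'J\,E)=A\,DJ\,D'J\,E=A\,D\,(JD'J)\,E=A\,D^2\,E=AE=I.
\]
Your computation $A\,JD\,E'\,A'\,DJ\,E=A\,JD^2J\,E=I$ closes only because your two index slips point in opposite directions and cancel; had you corrected just one premise, the product would read $A\,(DJ)^2\,E=A\,D(JDJ)E=A\,DD'\,E$, which is not the identity in general. So: right statement, same route as the paper, but the sign bookkeeping at the crux needs to be redone as above rather than asserted from the printed formula.
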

\begin{proof}
Set $\beta_{k,r}=\chi_{k,r}-\chi_{k,0}$ and $\beta'_{k,r}=\chi'_{k,r}-\chi'_{k,0}$, then we have
\begin{align*}
&\left(
\begin{array}{cccc}
e_{1,1} & e_{1,2} & \cdots & e_{1,n-1}   \\
0 & e_{2,2}&\cdots  & e_{2,n-1}  \\
\cdots & \cdots & \cdots & \cdots  \\
0 & 0 & \cdots & e_{n-1,n-1}
\end{array}
\right)
\cdot 
\left(
\begin{array}{ccc}
\beta_{1,1}   &   \cdots & \beta_{1,n-1}   \\
\beta_{2,1}   & \cdots  & \beta_{2,n-1}  \\
\cdots & \cdots   & \cdots  \\
\beta_{n-1,1}   & \cdots & \beta_{n-1,n-1} 
\end{array}
\right) 
\cdot 
\left(
\begin{array}{ccc}
\beta'_{1,1}   &   \cdots & \beta'_{1,n-1}   \\
\beta'_{2,1}   & \cdots  & \beta'_{2,n-1}  \\
\cdots & \cdots   & \cdots  \\
\beta'_{n-1,1}   & \cdots & \beta'_{n-1,n-1} 
\end{array}
\right) \\
&=
\left(
\begin{array}{cccc}
0 & 0 & \cdots &  \hat{e}'_{n-1,n-1}  \\
0 & \cdots  &  \hat{e}'_{n-2,n-2} &   \hat{e}'_{n-2,n-1}  \\
\cdots & \cdots & \cdots  & \cdots  \\
\hat{e}'_{1,1}   & \hat{e}'_{1,2} & \cdots & \hat{e}'_{1,n-1}  
\end{array}
\right) 
\cdot 
\left(
\begin{array}{ccc}
\beta'_{1,1}   &   \cdots & \beta'_{1,n-1}   \\
\beta'_{2,1}   & \cdots  & \beta'_{2,n-1}  \\
\cdots & \cdots   & \cdots  \\
\beta'_{n-1,1}   & \cdots & \beta'_{n-1,n-1} 
\end{array}
\right)
\end{align*}
\begin{align*}
&=
\left(
\begin{array}{ccc}
(-1)^{T_{n-1}}   &   \cdots & 0   \\
0 & (-1)^{T_{n-2}} & \cdots \\
\cdots & \cdots   & \cdots  \\
0 & \cdots & (-1)^{T_1}
\end{array}
\right)
\cdot 
\left(
\begin{array}{ccc}
0  & \cdots &  e'_{n-1,n-1}  \\
0 & \cdots  &    e'_{n-2,n-1}  \\
\cdots & \cdots   & \cdots  \\
e'_{1,1}    & \cdots & e'_{1,n-1}  
\end{array}
\right) 
\cdot 
\left(
\begin{array}{ccc}
\beta'_{1,1}   &   \cdots & \beta'_{1,n-1}   \\
\beta'_{2,1}   & \cdots  & \beta'_{2,n-1}  \\
\cdots & \cdots   & \cdots  \\
\beta'_{n-1,1}   & \cdots & \beta'_{n-1,n-1} 
\end{array}
\right) \\
&= 
\left(
\begin{array}{ccc}
(-1)^{T_{n-1}}   &   \cdots & 0   \\
0 & (-1)^{T_{n-2}} & \cdots \\
\cdots & \cdots   & \cdots  \\
0 & \cdots & (-1)^{T_1}
\end{array}
\right)
\cdot 
\left(
\begin{array}{cccc}
\hat{e}_{1,1} & \hat{e}_{1,2} & \cdots & \hat{e}_{1,n-1}   \\
0 & \hat{e}_{2,2}&\cdots  & \hat{e}_{2,n-1}  \\
\cdots & \cdots & \cdots & \cdots  \\
0 & 0 & \cdots & \hat{e}_{n-1,n-1}
\end{array}
\right) \\
&= \left(
\begin{array}{ccc}
(-1)^{T_{n-1}+T'_1}   &   \cdots & 0   \\
0 & (-1)^{T_{n-2}+T'_2} & \cdots \\
\cdots & \cdots   & \cdots  \\
0 & \cdots & (-1)^{T_1+T'_{n-1}}
\end{array}
\right)
\cdot 
\left(
\begin{array}{cccc}
e_{1,1} & e_{1,2} & \cdots & e_{1,n-1}   \\
0 & e_{2,2}&\cdots  & e_{2,n-1}  \\
\cdots & \cdots & \cdots & \cdots  \\
0 & 0 & \cdots & e_{n-1,n-1}
\end{array}
\right) 
\end{align*}
Notice that
$
T_k+T'_{n-k}=d_k+d'_{n-k}+N+d'_{n-k}+d_k+N
$
is even, thus we complete the proof.
\end{proof}

In particular, for the reflective recursive group orbits,
the $\chi$ indices of orbits $\cO_{n,k}$ are equivalent to their local Euler obstructions: they also determine the local Euler obstructions. Here by reflective we mean the following assumption:
\begin{assu}[Reflective Assumption]
\label{assu; reflective}
For any $G_n$ action on $V_n$, the dual orbits in $V_n^{*}$ also have the normal slice property. More  precisely, we assume that
for any point $p\in \cO_{n,r}$, the normal slice pair $(N_p\cap \cO'_{n,k}, p)$ is isomorphic to $(\cO'_{r,k}, 0)$.
\end{assu}

The normal slice assumptions shows that 
$
Eu_{\bar{\cO'}_{n,k}}(\cO'_{n,r})=Eu_{\bar{\cO'}_{r,k}}(\cO'_{r,r})
$
are independent of also $n$. Thus we can welly define the indices $e_{k,r}=Eu_{\bar{\cO}_{n,k}}(\cO_{n,r})$ and $e'_{k,r}=Eu_{\bar{\cO'}_{n,k}}(\cO'_{n,r})$ respectively.
Let $S_{n,k}$ and $S'_{n,k}$ be the projectivizations, and we denote their dimensions by $d_{n,k}$ and $d'_{n,k}$ respectively. We define $\chi^n_{k,r}=\chi(S_{n,k}\cap L_{n,r})$ for $l_{n,r}\in S'_{n,r}$. Let $l_n$ denotes the dimension of $V_n$. The following Lemma  shows that the local Euler obstruction of the dual orbits are the `same' with the original orbits.
\begin{lemm}[Duality]
\label{lemm. duality}
For any recursive group orbits satisfying  Assumption~\ref{assum; orbits} and Assumption~\ref{assu; reflective}, we have
\[
e'_{i,j}= \hat{e}_{j-i, j} =(-1)^{l_n+d'_{n,k}-d_{n,k}} \cdot e_{n-k,n}  \/.
\]
\end{lemm}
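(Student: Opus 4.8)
The plan is to reduce the asserted identity to a comparison of local Euler obstructions \emph{at the origin}, at which point Proposition~\ref{prop; MatherObstruction} does the real work: its second formula trades the obstruction of an affine cone for that of the affine cone over the projective dual variety, up to an explicit sign.

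First I would use the reflective hypothesis (Assumption~\ref{assu; reflective}) to peel off the local product factors. Since the dual orbits form their own recursive family, the normal-slice reduction used for the original orbits applies verbatim to the primed ones: for $p\in\cO'_{n,j}$ the transverse slice $(N_p\cap\cO'_{n,i},p)$ is isomorphic to $(\cO'_{j,i},0)$, and because $\cO'_{j,j}=\{0\}$ this yields
\[
e'_{i,j}=Eu_{\bar{\cO'}_{n,i}}(\cO'_{n,j})=Eu_{\bar{\cO'}_{j,i}}(0),
\]
a value independent of the ambient $n$ (this is exactly the reduction recorded in the discussion just before the lemma). The same step for the original orbits, already carried out in the proof of Theorem~\ref{theo; EulerObstruction}, gives $e_{j-i,j}=Eu_{\bar{\cO}_{j,j-i}}(0)$. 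Hence it is enough to relate the two origin-values $Eu_{\bar{\cO'}_{j,i}}(0)$ and $Eu_{\bar{\cO}_{j,j-i}}(0)$.

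Second I would recognise $\bar{\cO'}_{j,i}$ as the affine cone over a projective dual variety. The dual-orbit labelling built into Assumption~\ref{assu; reflective} is the one coming from Assumption~\ref{assum; orbits2}, for which $\PP(\bar{\cO}_{j,m})^\vee=\PP(\bar{\cO'}_{j,j-m})$ whenever $1\le m\le j-1$; taking $m=j-i$ this says that $\PP(\bar{\cO'}_{j,i})$ is the projective dual of $S_{j,j-i}$, so $\bar{\cO'}_{j,i}$ is the dual cone of (the closure of) $S_{j,j-i}$. Applying the second formula of Proposition~\ref{prop; MatherObstruction} to this dual pair, with $\dim S_{j,j-i}=d_{j,j-i}$, $\dim S'_{j,i}=d'_{j,i}$ and $\dim V_j=l_j$, we obtain
\[
Eu_{\bar{\cO'}_{j,i}}(0)=(-1)^{d_{j,j-i}+d'_{j,i}+l_j}\,Eu_{\bar{\cO}_{j,j-i}}(0)=(-1)^{d_{j,j-i}+d'_{j,i}+l_j}\,e_{j-i,j},
\]
which is precisely $e'_{i,j}=\hat{e}_{j-i,j}$, the middle symbol being by definition this signed multiple of $e_{j-i,j}$. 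The two degenerate indices $i=j$ and $i=0$ lie outside the hypotheses of Proposition~\ref{prop; MatherObstruction} (there one of the two projective varieties is empty or is all of $\PP(V_j)$), but in those cases everything collapses to $Eu_{\{0\}}(0)=Eu_{V_j}(0)=1$ and the identity is trivial.

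The sign arithmetic is routine, and the genuine duality input is entirely contained in Proposition~\ref{prop; MatherObstruction}. The one place that needs real care — and which I expect to be the main obstacle — is the bookkeeping in the second step: pinning down that the dual-orbit indexing forced by the reflective hypothesis is the one satisfying $\PP(\bar{\cO}_{j,m})^\vee=\PP(\bar{\cO'}_{j,j-m})$, so that the exponents in the sign are indeed $d_{j,j-i}$ and $d'_{j,i}$ and not some other pair of dimensions, and isolating the two boundary values of $i$ before invoking the duality theorem. Once this compatibility is settled the lemma follows in a single line.
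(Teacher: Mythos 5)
Your proposal is correct and takes essentially the same route as the paper: the paper likewise reduces $e'_{k,n}$ to the origin value $Eu_{\bar{\cO'}_{n,k}}(0)$ via the reflective normal-slice property and then invokes the duality of Proposition~\ref{prop; MatherObstruction} (merely unwound through the evaluation $c_M^{S'_{n,k}}(-1)$ and Aluffi's involution) to produce the sign $(-1)^{l_n+d'_{n,k}-d_{n,k}}$. The only differences are cosmetic index bookkeeping and your explicit treatment of the boundary cases, which the paper omits.
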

\begin{proof}
Recall from Proposition~\ref{prop; MatherObstruction} that, denote $l_n=\dim V_n$ we have
\begin{align*}
e'_{k,n}=& Eu_{\bar{\cO'}_{n,k}}(\cO'_{n,n})
=(-1)^{l_n-1} c_{M}^{S'_{n,k}}(-1)  \\
=& (-1)^{l_n-1} \cdot (-1)^{l_n+d'_{n,k}-d_{n,k}} c_{M}^{S_{n,n-k}}(-1) \\
=& (-1)^{l_n+d'_{n,k}-d_{n,k}} Eu_{\bar{\cO'}_{n,n-k}}(\cO_{n,n}) \\
=& (-1)^{l_n+d'_{n,k}-d_{n,k}} \cdot e_{n-k,n} =\hat{e}_{n-k,n}
\end{align*}
\end{proof}
\begin{coro}
We consider recursive group orbits satisfying  Assumption~\ref{assum; orbits} and Assumption~\ref{assu; reflective}. Let $\chi^n_{k,r}$ denote the Euler characteristic $\chi(S_{n,k}\cap L_{n,r})$ for any hyperplane $l_{n,r}\in S'_{n,r}$. Then we have the following equivalence for each $n$:  they can be deduced from each other. 
\[
\{\chi^n_{k,r}| k,r =0,1,\cdots ,n-1\}
 \xlongleftrightarrow{\text{equivalent}} 
	\{e_{i,j}|i,j=1,\cdots n\}\/.
\]
In particular, the $n$th level sectional Euler characteristics $\{\chi^n_{k,r}| k,r =0,1,\cdots ,n-1\}$ control the $n-1$th level $\{\chi^{n-1}_{k,r}| k,r =0,1,\cdots ,n-2\}$.
%the information of sectional Euler characteristic 
%$
%\{\chi^n_{k,r}| k,r =0,1,\cdots ,n-1\}
%$
% is equivalent to the information carried by the local Euler obstructions 
%$
%\{e'_{i,j}|i,j=1,\cdots n\}\/.
%$

%Thus by symmetry, the information of $\{e_{i,j}|i,j=1,\cdots n\}$ is equivalent to the information carried by $\{\chi^{' n}_{k,r}| k,r =0,1,\cdots ,n-1\}$
\end{coro}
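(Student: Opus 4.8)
The plan is to read Theorem~\ref{theo; sectionalEuler} as a two-way dictionary between the sectional Euler characteristics $\chi^n_{k,r}$ and the local Euler obstructions $e_{i,j}$, and to supply its right-hand side using Lemma~\ref{lemm. duality} and Theorem~\ref{theo; EulerObstruction}. First I would record what counts as ``given'' structural data of the recursive family, independent of the hard invariants: the dimensions $d_{n,k},d'_{n,k},l_n$ and the plain Euler characteristics $\chi(S_{n,k})$ of the projectivized orbits. These enter Theorem~\ref{theo; sectionalEuler} only through the additive term $\chi(S_{n,k})-\sm(S_{n,k})$, which is moreover equal to $\chi^n_{k,0}$. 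In this notation, for reflective recursive orbits the theorem reads $\chi^n_{k,r}=\sum_{i=r}^{n-1}\alpha_{k,i}\,(-1)^{d_{n,i}+d'_{n,n-i}+l_n}\,e'_{n-i,r}+\chi(S_{n,k})-\sm_{n,k}$, where $A=[\alpha_{k,i}]$ is the inverse of the unipotent upper-triangular matrix $E=[e_{i,j}]_{1\le i,j\le n-1}$. Applying Lemma~\ref{lemm. duality} at each level rewrites every dual obstruction $e'_{a,b}$ as a sign, determined by $l_b,d_{b,\bullet},d'_{b,\bullet}$, times an entry $e_{b-a,b}$ of an $E$ of level $\le n-1$; and Theorem~\ref{theo; EulerObstruction} expresses each $\sm_{n,k}$ as a triangular $A$-combination of the last column $\{e_{r,n}\}$ (so that $\{\sm_{n,k}\}$ and $\{e_{r,n}\}$ determine each other via $E$). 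Hence the entire right-hand side becomes an explicit function of $\{e_{i,j}\}$ and of the given data.

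This already yields $\{e_{i,j}\}\Rightarrow\{\chi^n_{k,r}\}$: invert $E$ to get $A$, assemble the $e'$ from Lemma~\ref{lemm. duality}, compute the $\sm_{n,k}$ from Theorem~\ref{theo; EulerObstruction}, and substitute. For the converse $\{\chi^n_{k,r}\}\Rightarrow\{e_{i,j}\}$ I would work with the differences $\beta_{k,r}:=\chi^n_{k,r}-\chi^n_{k,0}$, which kill the correction term. By the matrix form in the proof of Theorem~\ref{theo; sectionalEuler}, $E\cdot[\beta_{k,r}]=M$, where $M$ is anti-triangular with $(k,r)$ entry $\hat e'_{n-k,r}=(-1)^{d_{n,n-k}+d'_{n,k}+l_n}e'_{n-k,r}$ for $r\ge n-k$ and $0$ otherwise. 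By Lemma~\ref{lemm. duality} each $e'_{n-k,r}$ is again a signed entry of $E$ of level $\le n-1$, so $M=M(E)$ is an explicit sign-decorated rearrangement of the entries of $E$, and the problem becomes the single matrix identity $E\cdot[\beta]=M(E)$ in the unknown $E$ with $[\beta]$ known. Since $E$ is unipotent upper-triangular and $M(E)$ is anti-triangular with $\pm1$ on the anti-diagonal (as $e'_{i,i}=1$), this identity can be solved one outer diagonal at a time, starting from the bottom row $(0,\dots,0,1)$ of $E$, which equates the bottom row of $[\beta]$ with the corresponding row of $M(E)$ and thereby pins down the first diagonal of $E$; iterating upward recovers every $e_{i,j}$ with $i,j\le n-1$. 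The last column $\{e_{i,n}\}$ is then recovered from $\{\sm_{n,k}\}=\{\chi(S_{n,k})-\chi^n_{k,0}\}$ via the triangular relation of Theorem~\ref{theo; EulerObstruction}. The Proposition just proved --- that $[\beta_{k,r}]$ and $[\beta'_{k,r}]$ are mutually inverse --- guarantees that $[\beta]$ is invertible and makes the two directions symmetric. The ``in particular'' then follows: $\{\chi^n_{k,r}\}$ recovers $\{e_{i,j}:i,j\le n\}$, hence in particular the level-$(n-1)$ obstruction matrix, and the forward direction applied at level $n-1$ produces $\{\chi^{n-1}_{k,r}\}$.

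The step I expect to need the most care is the status of the term $\chi(S_{n,k})-\sm(S_{n,k})$ in Theorem~\ref{theo; sectionalEuler}: the claimed equivalence is not purely formal unless one first observes that this term is the $r=0$ member $\chi^n_{k,0}$ of the family being related, and that $\sm_{n,k}$ is itself recovered from the $e$'s through Theorem~\ref{theo; EulerObstruction}; consequently the $\chi(S_{n,k})$ contribute nothing beyond the $e$'s and the stated sectional data. Granting this bookkeeping, everything collapses to inverting the two nested unipotent / anti-triangular matrices $E$ and $M(E)$, which is elementary linear algebra. The only remaining routine point is to keep the sign exponents and index ranges in the Lemma~\ref{lemm. duality} substitution straight --- writing $N=l_n$ and recording which level every $d_{\bullet,\bullet}$ and $d'_{\bullet,\bullet}$ refers to --- which is exactly what Assumption~\ref{assu; reflective} makes unambiguous.
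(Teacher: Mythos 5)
Your proposal is correct and follows essentially the same route as the paper: both directions rest on the Ernström matrix identity $E\cdot[\beta]=M$ from the proof of Theorem~\ref{theo; sectionalEuler}, with Lemma~\ref{lemm. duality} converting the dual obstructions $e'$ into entries of $E$ and Theorem~\ref{theo; EulerObstruction} handling the exchange between $\sm_{n,k}$ and the last column $\{e_{i,n}\}$. The only differences are cosmetic — you unwind the identity superdiagonal-by-superdiagonal from the bottom row of $E$ rather than row-by-row via $[\beta]^{-1}$ as the paper does, and you are somewhat more explicit than the paper about treating $\chi(S_{n,k})$ and the dimension data as given, which is a point the paper leaves implicit.
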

\begin{proof}
We have shown how to obtain the sectional Euler characteristic $\chi^n_{k,r}$ from local Euler obstructions $e'_{k,r}$ and $\sm_{k,r}$. By the previous Duality Lemma and Theorem~\ref{theo; EulerObstruction}, both of them are completely determined by $e_{k,r}$. 
Thus we have proved the right-to-left arrow, and  
we just need to show that we can compute $e_{i,j}$ from $\chi^n_{k,r}$. Applying Ernstr\"om's theorem we have
\[
 \left(
\begin{array}{cccc}
e_{1,1} & e_{1,2} & \cdots &  e_{1,n-1}  \\
0 & e_{2,2} & \cdots   &  e_{2,n-1}  \\
\cdots & \cdots & \cdots  & \cdots  \\
0   & 0 & \cdots & e_{n-1,n-1}  
\end{array}
\right) 
\cdot 
[\chi^n_{k,r}-\chi^n_{k,0}]_{k,r}
=
-
\left(
\begin{array}{cccc}
0 & 0 & \cdots &  e_{0,n-1}  \\
0 & \cdots  &  e_{0,n-2} &   e_{1,n-1}  \\
\cdots & \cdots & \cdots  & \cdots  \\
e_{0,1}   & e_{1,2} & \cdots & e_{n-2,n-1}  
\end{array}
\right) . 
\]
Here the two triangular matrices are invertible, thus the matrix $[\beta^n_{k,r}:=\chi^n_{k,r}-\chi^n_{k,0}]_{k,r}$ is also invertible. Then for $1\leq k\leq n-1$ we have
\[
\left(
\begin{array}{cccc}
\beta_{1,1} & \beta_{2,1} & \cdots &  \beta_{n-1,1}  \\
\beta_{1,2} & \beta_{2,2} & \cdots   &  \beta_{n-1,2}  \\
\cdots & \cdots & \cdots  & \cdots  \\
\beta_{1,n-1} & \beta_{2,n-1} & \cdots   &  \beta_{n-1,n-1}  
\end{array}
\right) 
\cdot 
\left(
\begin{array}{c}
0  \\
\cdots   \\
0  \\
e_{k,k}\\
e_{k,k+1}  \\
\cdots \\
e_{k,n-1} 
\end{array}
\right)
=
\left(
\begin{array}{c}
0  \\
\cdots   \\
0  \\
e_{0,n-k}\\
e_{1,n-k+1}  \\
\cdots \\
e_{k-1,n-1} 
\end{array}
\right)
\]
This shows that if $\{e_{i,j}\}$ are known for $i\leq j<k$, then $e_{i,k}$ are uniquely determined by $[\beta^n_{k,r}]_{k,r}$. When $k=1$, $e_{0,j}=1$. Thus by induction on $k$ we have proved that $e_{i,j}$ can be obtained from $\chi^n_{k,r}$. 
\end{proof}

\section{Matrix Rank Loci}
\label{S; rankloci}
For the rest of the paper we discuss the matrix rank loci. 
By a matrix rank loci we mean the following three cases. Let $M_n$, $M^\wedge_n$ and $M^S_n$ be the space of $n\times n$ ordinary, skew-symmetric and symmetric matrices over $k$ respectively. We know that $M_n=(k^n)^{\otimes 2}$, $M_n^\wedge=\wedge^2 k^n$ and $M_n^S=Sym^2 k ^n$ are of dimension $n^2$, $\binom{n+1}{2}$ and $\binom{n}{2}$ respectively. The group $GL_n(k)\times GL_n(k)$ and $GL_n(k)$ act  on $M_n$, $M^\wedge_n$ and $M^S_n$ by sending a matrix $A$ to $PAQ$, $PAP^t$ and $PAP^t$. For all three cases, there are only finitely orbits consisting of matrices of the same rank. 
For any $0\leq k\leq n$, we denote the orbits as
\begin{align*}
\Sigma^\circ_{n,k}:=& \{A\in M_n| \rk A= n-k\};\\
\Sigma^{S \circ}_{n,k}:=& \{A\in M^S_n| \rk A =n-k\} \/. 
\end{align*}
Since a skew-symmetric matrix can only be of even rank, we denote
\[
\Sigma^{\wedge \circ}_{2n+1,2k+1}:=  \{A\in M^\wedge_{2n+1}| \rk A= 2n-2k\};\quad \Sigma^{\wedge \circ}_{2n,2k}:=  \{A\in M^\wedge_{2n}| \rk A= 2n-2k\}\/.
\]
We call those orbits the ordinary, symmetric and skew-symmetric rank loci. We denote $\Sigma_{n,k}$, $\Sigma^\wedge_{n,k}$ and $\Sigma^S_{n,k}$
to be there closure.

Note that all three $GL_n$  actions  contain  $k^*$ multiplications, thus the actions pass to projectived spaces $\PP(M_n)$, $\PP(M^\wedge_n)$ and $\PP(M^S_n)$. We denote the projectived orbits (and their closure) by $\tau^\circ_{n,k}$, $\tau^{\wedge \circ}_{n,k}$ and 
$\tau^{S \circ}_{n,k}$ (and  $\tau_{n,k}$, $\tau^\wedge_{n,k}$ and $\tau^S_{n,k}$) respectively.  We list here some basic properties of rank loci, for details we refer to \cite{R-Prom}, \cite{Prom}, \cite{Eisenbud} and \cite{Harris-Tu}. 
\begin{prop}
\label{prop; rankloci}
With the notations mentioned above.
\begin{enumerate}
\item The rank loci are reduced irreducible quasi-projective varieties.
\item $($Dimension$)$ $\dim \Sigma_{n,k}=n^2-k^2$, $\dim \Sigma^S_{n,k}=\frac{(n-k)(n+k+1)}{2}$ and $\dim \Sigma^\wedge_{n,k}=\frac{(n-k)(n+k-1)}{2}$ .
\item $($Singularity$)$ When $1\leq k\leq n-1$, $\Sigma_{n,k}$, $\Sigma^\wedge_{n,k}$ and $\Sigma^S_{n,k}$ are singular with singularity 
$\Sigma_{n,k+1}$, $\Sigma^\wedge_{n,k+2}$ and $\Sigma^S_{n,k+1}$ respectively. For $k=0, n$ they are smooth. 
\item $($Product Structure$)$ For any $A\in \Sigma_{n,r}$, the normal slice $N_A$ is isomorphic to $M_{r}$, and 
intersects $\Sigma_{n,r}$ at $\{A\}=0$. Moreover the intersection $\Sigma_{n,k}\cap N_A$ is isomorphic to $\Sigma_{r,k}$.  
This gives a local product structure  $(\Sigma_{n,k}, A)\cong (\Sigma_{r,k},0)\times k^{N}$. The same property holds for $\Sigma^\wedge_{n,k }$ and $\Sigma^S_{n,k}$.
\item $($Duality$)$ For $k=1,\cdots ,n-1$, 
the dual variety of  $\tau_{n,k}$, $\tau^S_{n,k}$ and $\tau^\wedge_{2n+1,2k+1}$  $(\text{or } \tau^\wedge_{2n ,2k})$  are isomorphic to $\tau_{n,n-k}$ ,  $\tau^S_{n,n-k}$  and  $\tau^\wedge_{2n+1,2n+1-2k}$ $(\text{or } \tau^\wedge_{2n ,2n-2k})$  respectively.
\item For ordinary matrix and skew-symmetric matrices, we have 
\begin{align*}
\chi(\tau^{\wedge \circ}_{n,k})= 
\begin{cases}
0 & k< n-2  \\
\binom{n}{2} & k=n-2
\end{cases} 
;\quad 
\chi(\tau^\circ_{n,k})= 
\begin{cases}
0 & k < n-1  \\
n^2  & k=n-1
\end{cases}
\end{align*}
\item For symmetric matrices, we have the following slightly different result
\begin{align*}
\chi^S_{n,k}:=\chi(\tau^{S \circ}_{n,k})
=&
\begin{cases}
0 & k< n-2 \\
\binom{n}{2} & k=n-2 \\
\binom{n}{1} & k=n-1
\end{cases}
\end{align*}
\end{enumerate}
\end{prop}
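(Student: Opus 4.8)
The plan is to dispatch (1)--(5) by reduction to classical facts plus one normal-form computation, and to concentrate the real work on the Euler characteristics (6)--(7).

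\textit{Parts (1)--(5).} Each $\Sigma^\circ_{n,k}$ (and its skew- and symmetric analogues) is a single orbit of a connected linear algebraic group, hence a smooth irreducible locally closed subvariety, so its closure is irreducible; reducedness of the defining ideal --- generated respectively by the $(n-k+1)$-minors, by the sub-Pfaffians, or by the $(n-k+1)$-minors of a generic symmetric matrix --- is the classical primality of generic determinantal, Pfaffian and symmetric-determinantal ideals, for which I would cite \cite{Eisenbud} and \cite{Harris-Tu}. For (2) I would use the dominant parametrizations $(B,C)\mapsto BC$ (from pairs of full-rank $n\times(n-k)$ and $(n-k)\times n$ matrices), $B\mapsto BJB^t$ and $B\mapsto BB^t$ (from full-rank $n\times(n-k)$ matrices, $J$ the standard symplectic matrix), whose fibres are torsors under $GL_{n-k}$, $Sp_{n-k}$, $O_{n-k}$ respectively; this gives $\dim\Sigma_{n,k}=2n(n-k)-(n-k)^2$, $\dim\Sigma^\wedge_{n,k}=n(n-k)-\binom{n-k+1}{2}$ and $\dim\Sigma^S_{n,k}=n(n-k)-\binom{n-k}{2}$, which rearrange to the stated formulas. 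For (4) I would move a chosen $A\in\Sigma^\circ_{n,r}$ by the group to $\left(\begin{smallmatrix}I_{n-r}&0\\0&0\end{smallmatrix}\right)$, identify $T_A(\text{orbit})=\{XA-AY\}$ with the space of matrices whose lower-right $r\times r$ block vanishes, and take the affine slice $A+\{\left(\begin{smallmatrix}0&0\\0&B\end{smallmatrix}\right):B\in M_r\}\cong M_r$, on which $\rk=(n-r)+\rk B$; this yields $N_A\cap\Sigma_{n,k}\cong\Sigma_{r,k}$ and the product structure, and the skew and symmetric cases are identical with the matching block decomposition. Part (3) then follows: the open orbit is homogeneous hence smooth, while at a point of the next deeper orbit the transverse slice from (4) is the affine cone over a Segre $\PP^k\times\PP^k$ (resp.\ a Plücker $Gr(2,k+2)$, resp.\ the quadratic Veronese of $\PP^k$), which is singular at the vertex; since the singular locus is a $GL$-invariant proper closed subset it must be exactly the next orbit closure, and $k=0,n$ give the ambient space resp.\ $\{0\}$. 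For (5) I would recompute the duals from the conormal variety: under the trace pairing and in the block coordinates above, the conormal space at a smooth point is the set of matrices supported on the lower-right block, i.e.\ of rank $\le k$ (resp.\ skew/symmetric of complementary rank), so the second projection of the conormal variety is the complementary rank locus, which gives the stated duals after the parity bookkeeping forced by skew forms having even rank.

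\textit{Parts (6)--(7).} These I would compute uniformly by $k^*$-fixed-point localization. Pick a one-parameter subgroup $t\mapsto\operatorname{diag}(t^{a_1},\dots,t^{a_\ell})$ of the acting group (two of them, for $GL_n\times GL_n$) with the exponents in general position, so that all the induced weights --- $a_i-b_j$ on $M_n$, $a_i+a_j$ ($i<j$) on $\wedge^2k^n$, $a_i+a_j$ ($i\le j$) on $\operatorname{Sym}^2k^n$ --- are pairwise distinct. Then every $k^*$-weight space of the ambient projective space is a coordinate line, so the fixed locus consists precisely of the coordinate points $[e_{ij}]$ in $\PP(M_n)$, $[e_i\wedge e_j]$ ($i<j$) in $\PP(\wedge^2k^n)$, and $[e_{ii}]$ together with $[e_ie_j+e_je_i]$ ($i<j$) in $\PP(\operatorname{Sym}^2k^n)$. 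By the standard identity $\chi(X)=\chi(X^{k^*})$, the Euler characteristic of $\tau^\circ_{n,k}$ (resp.\ $\tau^{\wedge\circ}_{n,k}$, $\tau^{S\circ}_{n,k}$) equals the number of those fixed points of rank exactly $n-k$. A coordinate matrix $e_{ij}$ has rank $1$, a coordinate skew form has rank $2$, and a coordinate symmetric form has rank $1$ (the $n$ points $[e_{ii}]$) or rank $2$ (the $\binom n2$ points $[e_ie_j+e_je_i]$); hence all these Euler characteristics vanish except $\chi(\tau^\circ_{n,n-1})=n^2$, $\chi(\tau^{\wedge\circ}_{n,n-2})=\binom n2$, $\chi(\tau^{S\circ}_{n,n-1})=n$ and $\chi(\tau^{S\circ}_{n,n-2})=\binom n2$, which is exactly (6) and (7). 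As a cross-check one may recognize $\tau^\circ_{n,n-1}$ as the Segre $\PP^{n-1}\times\PP^{n-1}$, $\tau^{\wedge\circ}_{n,n-2}$ as $Gr(2,n)$, $\tau^{S\circ}_{n,n-1}$ as the quadratic Veronese $\cong\PP^{n-1}$, and $\tau^{S\circ}_{n,n-2}$ as a Zariski-locally-trivial bundle over $Gr(2,n)$ with fibre $\PP^2$ minus a smooth conic, of Euler characteristic $1$.

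\textit{Main obstacle.} Everything outside (6)--(7) is bookkeeping; inside (6)--(7) the delicate point is the symmetric stratum $\tau^{S\circ}_{n,n-2}$. The asymmetry between the ordinary case (only rank-$1$ coordinate matrices) and the skew case (only rank-$2$) on the one hand, and the symmetric case (both a rank-$1$ and a rank-$2$ family of coordinate matrices) on the other, is exactly what produces the extra nonzero value, and in the bundle description it surfaces as the non-obvious Euler characteristic $\chi(\PP^2\setminus\text{conic})=1$ rather than $0$. I would also need to take care that the exponents $a_i$ avoid the finitely many resonances $a_i+a_j=a_k+a_l$ and $2a_i=a_j+a_k$, since otherwise the fixed locus becomes positive-dimensional and the point count fails; a choice such as $a_i=3^i$ removes all of them.
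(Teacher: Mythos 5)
The paper offers no proof of this proposition at all---it is stated as a list of classical facts with a pointer to \cite{R-Prom}, \cite{Prom}, \cite{Eisenbud} and \cite{Harris-Tu}---so your argument is by necessity a different (and self-contained) route. Your reductions for (1)--(5) are the standard ones and check out: the three dominant parametrizations $(B,C)\mapsto BC$, $B\mapsto BJB^t$, $B\mapsto BB^t$ give exactly the stated dimensions after subtracting $\dim GL_{n-k}$, $\dim Sp_{n-k}$, $\dim O_{n-k}$; the block normal form gives the product structure (4); and the conormal computation gives the duality (5). The genuinely valuable part is (6)--(7): replacing the topological computations one would otherwise import from the literature by $\mathbb{G}_m$-fixed-point localization with a resonance-free weight vector such as $a_i=3^i$, so that the fixed locus of the projectivized representation is exactly the set of coordinate points and the Euler characteristic of each rank stratum reduces to counting coordinate matrices of the given rank ($E_{ij}$ has rank $1$, $e_i\wedge e_j$ has rank $2$, while $e_i^2$ and $e_ie_j$ have ranks $1$ and $2$). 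This cleanly explains why only the symmetric case has two nonvanishing values, and your bundle description of $\tau^{S\circ}_{n,n-2}$ over $G(2,n)$ with fibre of Euler characteristic $\chi(\PP^2)-\chi(\text{conic})=1$ is a correct cross-check. Two small points you should address: over a general algebraically closed field of characteristic $0$ the identity $\chi(X)=\chi(X^{\mathbb{G}_m})$ for the $c_{sm}$-defined Euler characteristic needs a word of justification (additivity of $\chi$ together with $\chi=0$ on the non-fixed part, or the Lefschetz principle); and part (3) of the statement is itself slightly imprecise at the bottom of the skew chain (e.g.\ $\Sigma^\wedge_{2m+1,1}=M^\wedge_{2m+1}$ is smooth), an edge case that your transverse-slice criterion in fact detects correctly.
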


Most importantly, all three types rank loci admit  natural resolutions of singularities: the Tjurina transforms defined as  the incidence of Grassmannians.  
Let $G(k,n)$ be the Grassmannian of $k$-planes in $V=k^n$, we denote $S$ and $Q$ to be the universal sub and quotient bundles.
For ordinary rank loci $\tau_{n,k}$, the (projective) Tjurina transform is defined by
\[
\hat{\tau}_{n,k}:=\{(\Lambda, A)|\Lambda\subset \ker A\}\subset \PP(M_n)\times G(k,n) \/.
\]
The skew-symmetric and symmetric Tjurina transforms are defined as
\begin{align*}
\hat{\tau}^\wedge_{n,k}:=&\{(\Lambda, X)|X|_{\Lambda}=0\}\subset \PP(M^\wedge_n)\times G(k,n) ;\\
\hat{\tau}^S_{n,k}:=&\{(\Lambda, X)|X|_{\Lambda}=0\}\subset \PP(M^S_n)\times G(k,n) ;
\end{align*}
For all three cases, set $X=\tau$, $X=\tau^\wedge$ and $X=\tau^S$, and set $\PP^N$ by $\PP(M_n)$, $\PP(M^\wedge_n)$ and $\PP(M^S_n)$
we have commutative diagrams
\[
\begin{tikzcd}
 & \hat{X}_{n,k} \arrow{r}{} \arrow{d}{p} \arrow{dl}{q} & G(k,n) \times \PP^{N} \arrow{d} \\
G(k,n) & X_{n,k} \arrow{r}{} & \PP^{N}.
\end{tikzcd}
\]
The second projection $p$ is a resolution of singularity,  and is isomorphic over $\tau^\circ_{n,k}$. The first projections $q$ identify  
the Tjurina transforms with  projectivized bundles:
\[
\hat{\tau}_{n,k}\cong \PP(Q^{\vee n});\quad  \hat{\tau}^\wedge_{n,k}\cong \PP(\wedge^2 Q^\vee); \quad \hat{\tau}^S_{n,k}\cong \PP(Sym^2 Q^\vee) \/.
\]

\section{Local Euler Obstruction of Rank Loci}
\label{S; EulerRankLoci}
In this section we apply Theorem~\ref{theo; sectionalEuler} to the case of matrix rank loci.  
\subsection{Ordinary Matrix}
For ordinary matrix cells, the local Euler obstruction are computed over $\CC$ in \cite{NG-TG} and over arbitrary algebraically closed field in \cite{Xiping2}. In  \cite{NG-TG} the authors used  recursive method  based on the knowledge of $\sm_{n,i}$ computed in \cite{E-G} via topology method. In \cite{Xiping2} the formula came from direct  computation via the knowledge of Nash blowup and the Nash bundle. Here we propose a proof to the formula for $\sm_{n,i}$ different with \cite{E-G} and \cite{Xiping2}. The proof is  algebraic,  thus works for    arbitrary algebraically closed field of characteristic $0$.
\begin{prop}
\label{prop; ordsm}
For ordinary matrix rank loci we have
\[
\sm_{n,i}=(-1)^{n+1-k}\binom{n}{k} \/.
\]
\end{prop}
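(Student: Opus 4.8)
The plan is to compute the $c_{sm}$ invariant $\sm_{n,k} = \chi(\tau^\circ_{n,k}) - \chi(\tau^\circ_{n,k}\cap H) = (-1)^{n^2-1}c_{sm}^{\tau_{n,k}}(-1)$ directly from the resolution of singularities $p\colon \hat\tau_{n,k}\to \tau_{n,k}$, where $\hat\tau_{n,k}\cong \PP(Q^{\vee n})$ sits over $G(k,n)$ via the first projection $q$. First I would use the fact that $p$ is an isomorphism over the dense open orbit $\tau^\circ_{n,k}$ together with the functoriality of $c_*$: pushing forward $\ind_{\hat\tau_{n,k}}$ gives $p_*c_{sm}(\hat\tau_{n,k}) = c_{sm}(\tau_{n,k}) + (\text{corrections from } p^{-1}(\text{lower rank loci}))$. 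Since $\hat\tau_{n,k}$ is smooth, $c_{sm}(\hat\tau_{n,k}) = c(T_{\hat\tau_{n,k}})\cap[\hat\tau_{n,k}]$, which is computable by standard Schubert calculus using the relative Euler sequence for $\PP(Q^{\vee n})\to G(k,n)$ and the tangent sequence of the Grassmannian. Rather than chase all the correction terms, the cleaner route is to work with the $\sm$ invariant directly: by Proposition~\ref{prop; generalizedBLS} applied to the stratification of the cone, $\sm_{n,k}$ can be extracted once we know the quantity $\chi(\tau^\circ_{n,k}) - \chi(\tau^\circ_{n,k}\cap H)$, but more practically I would evaluate $(-1)^{n^2-1}c_{sm}^{\tau_{n,k}}(-1)$ by pushing the smooth class $c_{sm}(\hat\tau_{n,k})$ all the way to a point after capping with an appropriate power of $H$ — the alternating-sum-at-$-1$ operation interacts well with pushforward.

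The concrete computation I would carry out: write $A_*(G(k,n)) $ in terms of Chern classes of $Q$ (rank $n-k$), note $\hat\tau_{n,k} = \PP(Q^{\oplus n \vee})$ has relative dimension $n(n-k)-1$ over $G(k,n)$, so $\dim \hat\tau_{n,k} = k(n-k) + n(n-k) - 1 = (n-k)(n+k)-1 - ?$; I'd double-check this against $\dim\tau_{n,k} = n^2-k^2-1 = (n-k)(n+k)-1$, confirming $p$ is birational. The tautological line bundle $\cL=\cO_{\PP(Q^{\vee n})}(1)$ pulls back the hyperplane class $H$ from $\PP(M_n)$. Then $c_{sm}(\hat\tau_{n,k}) = c(T_q)\, c(q^*T_{G(k,n)})\cap[\hat\tau_{n,k}]$ where $c(T_q) = c(Q^{\vee n}\otimes\cL)/c(\cL)$ from the relative Euler sequence, and $c(T_{G(k,n)}) = c(S^\vee\otimes Q)$. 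The alternating sum $c_{sm}^{\tau_{n,k}}(-1)$ becomes $\int_{\hat\tau_{n,k}} c_{sm}(\hat\tau_{n,k})|_{H\mapsto -1}$ after homogenizing; Lemma~\ref{Lemma; Segre} is exactly the tool to push the $\cL$-dependence through $q$... wait, through $p$ — I'd use it to handle $p_*$ of terms involving $\cL = p^*H$, reducing to a Segre class computation on $G(k,n)$, then evaluate the resulting Grassmannian integral, which should collapse to the single binomial $(-1)^{n+1-k}\binom{n}{k}$ by a residue or Vandermonde-type identity.

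The main obstacle I expect is bookkeeping the correction terms: $p$ is birational but not an isomorphism away from $\tau^\circ_{n,k}$, so $p_*c_{sm}(\hat\tau_{n,k})$ is not $c_{sm}(\tau_{n,k})$ on the nose — it equals $c_*(p_*\ind_{\hat\tau_{n,k}})$ and $p_*\ind_{\hat\tau_{n,k}} = \ind_{\tau_{n,k}} + \sum_{j>k}(\chi(\text{fiber}) - 1)\ind_{\tau_{n,j}}$ where the fibers over $\tau^\circ_{n,j}$ are sub-Grassmannians $G(k, j)$ (or rather $G(k,n)$-fibers cut by the rank condition). One needs the Euler characteristics of these fibers — which are $\binom{j}{k}$-type binomials from $\chi(G(k,j))$ — and then an inductive argument on $k$ (or on $n-k$) to disentangle $\sm_{n,k}$ from the lower-rank contributions. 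Alternatively, I would sidestep this entirely: since the $\sm$ invariant only needs the alternating sum at $-1$, and since $\chi(\tau^\circ_{n,k})$ is given in Proposition~\ref{prop; rankloci}(6) (namely $0$ for $k<n-1$ and $n^2$ for $k=n-1$), I would instead compute $\chi(\tau^\circ_{n,k}\cap H)$ for generic $H$ via the same resolution restricted to the hyperplane, reducing everything to a generic-section Euler characteristic on $\PP(Q^{\vee n})$ fibered over $G(k,n)$ — a fibration argument plus the multiplicativity of Euler characteristic gives $\chi$ as an integral over $G(k,n)$ of the Euler class of the relevant bundle, and the binomial emerges from $\chi(G(k,n)) = \binom{n}{k}$ weighted by signs. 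I'd expect the sign $(-1)^{n+1-k}$ to come out of the parity of $\dim\PP(M_n) = n^2-1$ combined with $n-k$ appearing in the relative dimension, and I would cross-check the final formula against the known small cases ($k=0$: $\sm_{n,0} = \chi(\PP^{n^2-1}) - \chi(\PP^{n^2-2}) = 1 = (-1)^{n+1}\binom{n}{0}$ only if $n$ is odd — so I'd need to be careful, meaning the formula as stated with $i=k$ on the left must be read with the binomial $\binom{n}{k}$ and the sign reconciled, suggesting the statement intends $k$ in a restricted range or there is an index typo to resolve in the proof).
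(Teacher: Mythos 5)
Your proposal follows essentially the same route as the paper: resolve by the Tjurina transform $\hat{\tau}_{n,k}\cong\PP(Q^{\vee n})$, invert $p_*\ind_{\hat{\tau}_{n,k}}=\sum_j\binom{j}{k}\ind_{\tau^\circ_{n,j}}$ to isolate $c_{sm}^{\tau^\circ_{n,k}}$, apply Lemma~\ref{Lemma; Segre} along the bundle projection $q$ to $G(k,n)$ (not along $p$ --- the Segre correction term then vanishes for degree reasons since $n(n-k)>\dim G(k,n)$, leaving $\int c(T_{G(k,n)})=\binom{n}{k}$), and finish with the alternating binomial sum. Your closing worry about $k=0$ is unfounded: $\tau^\circ_{n,0}$ is the open orbit of full-rank matrices, not all of $\PP^{n^2-1}$, so $\sm_{n,0}=(-1)^{n+1}$ is consistent, and the $i$ versus $k$ mismatch in the statement is indeed just a typo.
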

\begin{proof}
Recall the Tjurina diagram
\[
\begin{tikzcd}
 & \hat{\tau}_{n,k} \arrow{r}{} \arrow{d}{p} \arrow{dl}{q} & G(k,n) \times \PP^{N} \arrow{d} \\
G(k,n) & \tau_{n,k} \arrow{r}{} & \PP^{N}.
\end{tikzcd}
\]
In \cite{Xiping1} we proved the following  formula
\[
c_{sm}^{\tau_{n,k}^\circ}(H)=\sum_{i=k}^{n-1 } (-1)^{i-k}\binom{i}{k} p_*c_{sm}^{\hat{\tau}_{n, i}} (H).
\]
Thus it amounts to compute $p_*c_{sm}^{\hat{\tau}_{n, i}}(-1)$.  
Recall that $\hat{\tau}_{n,k}$ is isomorphic to the projectivized bundle $\PP(Q^{\vee n})$. 
Let $S$ and $Q$ be the universal sub and quotient bundles on Grassmannian $G(k,n)$, and let $\cO(1)$ be the tautological line bundle. We have
\[
(-1)^{n^2-1}p_*c_{sm}^{\hat{\tau}_{n,k}}(-1)=\int_{\PP(Q^{\vee n})} \frac{c(S^\vee\otimes Q)c(Q^{\vee n}\otimes \cO(1))}{c(\cO(1))} \cap [\PP(Q^{\vee n})]
\]
By Lemma~\ref{Lemma; Segre} we have
\begin{align*}
(-1)^{n^2-1}p_*c_{sm}^{\hat{\tau}_{n,k}}(-1)
=&\int_{\PP(Q^{\vee n})} \frac{c(S^\vee\otimes Q)c(Q^{\vee n}\otimes \cO(1))}{c(\cO(1))} \cap [\PP(Q^{\vee n})]\\
=& \int_{G(k,n)} \left( 1-\frac{c_{n(n-k)}(Q^{ n}))}{c(Q^{ n}))} \right)\cap [G(k,n)] \\
=& \int_{G(k,n)}c(T_{G(k,n)})\cap  [G(k,n)]=\binom{n}{k}
\end{align*}

Thus we have  
\begin{align*}
\sm_{n,k}=& (-1)^{n^2-1} c_{sm}^{\tau_{n,k}^\circ}(-1)
=\sum_{i=k}^{n-1 } (-1)^{i-k}\binom{i}{k} (-1)^{n^2-1} p_*c_{sm}^{\hat{\tau}_{n, i}} (-1) \\
=& \sum_{i=k}^{n-1} (-1)^{i-k} \binom{n}{i} \binom{i}{k}
= \binom{n}{k} \cdot \sum_{i=k}^{n-1}(-1)^{i-k}\binom{n-k}{i-k} \\
=& (-1)^{n+1-k} \binom{n}{k}
\end{align*}
\end{proof}	

Follow from Theorem~\ref{theo; EulerObstruction} we have:
\begin{theo}
\label{theo; OrdEuler}
The local Euler obstruction function of $\tau_{n,k}$ is
\[
e_{i,j}=Eu_{\tau_{n,i}}(\tau_{n,j}^\circ)=\binom{j}{i} \/.
\]
\end{theo}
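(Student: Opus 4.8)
The plan is to apply Theorem~\ref{theo; EulerObstruction} directly, feeding it the $c_{sm}$ invariants computed in Proposition~\ref{prop; ordsm}. For ordinary rank loci the action $G_n = GL_n\times GL_n$ on $V_n = (k^n)^{\otimes 2}$ satisfies Assumption~\ref{assum; orbits}: the orbits $\Sigma^\circ_{n,k}$ are cones (they contain $k^*$-scaling), they are totally ordered by closure with $\Sigma^\circ_{n,0}$ dense and $\Sigma^\circ_{n,n}=\{0\}$, and the normal-slice property is exactly Proposition~\ref{prop; rankloci}(4), giving $(\Sigma_{n,k},A)\cong(\Sigma_{r,k},0)\times k^N$ for $A\in\Sigma^\circ_{n,r}$. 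So the theorem applies with $\sm_{n,k}=(-1)^{n+1-k}\binom{n}{k}$.

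The main computational step is then to evaluate the flag sum
\[
e_{i,j} = Eu_{\tau_{n,i}}(\tau^\circ_{n,j}) = \sum_{(\mu_1>\mu_2>\cdots>\mu_l\geq i)} \sm_{j,\mu_1}\cdot\sm_{\mu_1,\mu_2}\cdots\sm_{\mu_l,i},
\]
where the sum runs over chains starting just below $j$ (i.e.\ $\mu_1 = j-1$), decreasing by exactly $1$ at each step, and ending at some $\mu_l \geq i$, with the last factor $\sm_{\mu_l,i}$. Here I am applying the theorem with $n$ replaced by $j$ and $k$ replaced by $i$, using that $e_{i,j}=Eu_{\bar\cO_{j,i}}(0)$ is independent of the ambient size. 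The key simplification is that each consecutive factor $\sm_{m,m-1} = (-1)^{2}\binom{m}{m-1} = m$ for the "descend by one" steps, while the two end factors carry the sign. More precisely, if the chain is $j-1 > j-2 > \cdots > m \geq i$ of length $l = j-1-m+1$, its contribution telescopes: the product of the $\sm_{\mu_t,\mu_{t+1}}$ over the interior steps is $\prod_{t}(\mu_t) $, and folding in $\sm_{j,j-1}$ and $\sm_{m,i}$ one gets (up to the overall sign $(-1)^{\#\text{steps}+\cdots}$) a ratio of binomial coefficients that collapses. I expect the cleanest route is to prove $e_{i,j}=\binom{j}{i}$ by induction on $j-i$ using the defining recursion $e_{i,j}=\sum_{r=i}^{j-1}\sm_{j,r}\,e_{i,r}$ rather than expanding the full flag sum: substituting $e_{i,r}=\binom{r}{i}$ and $\sm_{j,r}=(-1)^{j+1-r}\binom{j}{r}$ reduces the claim to the identity
\[
\sum_{r=i}^{j-1}(-1)^{j+1-r}\binom{j}{r}\binom{r}{i} = \binom{j}{i},
\]
which follows from the subset-of-a-subset identity $\binom{j}{r}\binom{r}{i}=\binom{j}{i}\binom{j-i}{r-i}$ together with $\sum_{s=0}^{m-1}(-1)^{m+1-s}\binom{m}{s} = (-1)^{m+1}\bigl((1-1)^m - (-1)^m\binom{m}{m}\bigr) = 1$, with $m = j-i$. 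This is precisely the same binomial sum that appears at the end of the proof of Proposition~\ref{prop; ordsm}.

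The only real obstacle is bookkeeping: making sure the indexing in Theorem~\ref{theo; EulerObstruction} (the roles of $n$, $k$, the flag conditions $\mu_i-1=\mu_{i+1}$, and the boundary terms) is matched correctly to the rank-loci labels, and checking the base case $j-i=1$, i.e.\ $e_{i,i+1}=\sm_{i+1,i}=\binom{i+1}{i}=i+1$, which is consistent with $\binom{i+1}{i}$. I would also remark, as the paper does, that this recovers the formula of \cite{NG-TG} over $\CC$ and of \cite{Xiping2} over a general algebraically closed field of characteristic $0$, but via the purely algebraic $c_{sm}$-invariant route.
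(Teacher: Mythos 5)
Your proposal is correct and follows essentially the same route as the paper: induction via the recursion $e_{k,n}=\sum_{r=k}^{n-1}e_{k,r}\,\sm_{n,r}$ from Theorem~\ref{theo; EulerObstruction}, substituting the $c_{sm}$ invariants of Proposition~\ref{prop; ordsm}, and closing with the subset-of-a-subset binomial identity. The preliminary discussion of the flag sum is harmless but unnecessary, since the recursion you ultimately use is exactly what the paper uses.
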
 
 \begin{proof}
We prove by inductions. For the initial case, we have $e_{i,i}=1$ for any $i$.
Assume that  $e_{i,j}=\binom{j}{i}$ for all $i\leq j<n$. Then Proposition~\ref{prop; csm(-1)} we have
\begin{align*}
 e_{k,n}
 =& \sum_{r=k}^{n-1} e_{k,r}\cdot  \sm_{n,r} = \sum_{r=k}^{n-1} \binom{r}{k} (-1)^{n+1-r}\binom{n}{r} \\
 =& \binom{n}{k} \cdot (-1)\cdot \sum_{j=0}^{n-k-1}(-1)^{n-k-j}\binom{n-k}{j} \\
 =& \binom{n}{k} \cdot (-1)\cdot (0^{n-k}-(-1)^{(n-k)-(n-k)}) 
 =\binom{n}{k}
\end{align*}
\end{proof}

\subsection{Skew-Symmetric Matrix}
For skew-symmetric rank loci we have $V_n=\wedge^2 k^n$ and $G_n=GL_n(k)$. 
we denote to   orbits of  of rank $n-k$ matrices in $\wedge^2 k^n$ and $\PP(\wedge^2 k^n)$ by $\Sigma^{\wedge \circ}_{n,k}$ and $\tau^{\wedge \circ}_{n,k}$ respectively.  
Here $n-k=0,2,4,\cdots  \lfloor \frac{n}{2} \rfloor$.
Let $\Sigma_{n,k}$ and $\tau_{n,k}$ be their closures. The local product structure shows
that, for $m\geq n$ we have
\[
Eu_{\tau^\wedge_{n,k}}(\tau^{\wedge \circ}_{n,k+j})=Eu_{\tau^\wedge_{m,k}}(\tau^{\wedge \circ}_{m,k+j})  \/.
\]

For complex skew-symmetric rank loci, the local Euler obstruction are computed in \cite[Chapter 9]{Prom}, in which  the author computed $\sm_{m,i}$ using topology method. Here we propose an algebraic formula which works for general $k$ of characteristic $0$. 
\begin{prop}
\label{prop; smSkew}
The $c_{sm}$ invariants are given by
\[
\sm_{2n,2k}=\sm_{2n+1,2k+1} =(-1)^{n-k+1}\binom{n}{k} \/.
\]
\end{prop}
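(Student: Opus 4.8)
The plan is to mimic the argument of Proposition~\ref{prop; ordsm}, now over the skew-symmetric Tjurina resolution $p\colon \hat{\tau}^\wedge_{n,k}\to \tau^\wedge_{n,k}$ with $\hat{\tau}^\wedge_{n,k}\cong \PP(\wedge^2 Q^\vee)$ a projective bundle over $G(k,n)$. First I would record the analogue of the formula of \cite{Xiping1} used above: since $p$ is an isomorphism over $\tau^{\wedge\circ}_{n,k}$ and its fibers over the deeper rank loci are isotropic Grassmannians, pushing forward the constructible function $\ind_{\hat{\tau}^\wedge_{n,k}}$ under $p$ and inverting the resulting triangular transition matrix writes $c_{sm}^{\tau^{\wedge\circ}_{n,k}}(H)$ as an explicit integer combination of the classes $p_*c_{sm}^{\hat{\tau}^\wedge_{n,i}}(H)$ for $i\geq k$. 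So, as before, the whole computation is reduced to evaluating $p_*c_{sm}^{\hat{\tau}^\wedge_{n,i}}(-1)$.

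For this step I would use $T_{G(k,n)}=S^\vee\otimes Q$ together with the relative Euler sequence of $\PP(\wedge^2 Q^\vee)\to G(k,n)$ to obtain, exactly as in the ordinary case,
\[
(-1)^{\binom{n}{2}-1}\,p_*c_{sm}^{\hat{\tau}^\wedge_{n,k}}(-1)=\int_{\PP(\wedge^2 Q^\vee)} \frac{c(S^\vee\otimes Q)\,c(\wedge^2 Q^\vee\otimes \cO(1))}{c(\cO(1))}\cap [\PP(\wedge^2 Q^\vee)],
\]
and then apply Lemma~\ref{Lemma; Segre} with $E=\wedge^2 Q^\vee$ together with the projection formula to descend to $G(k,n)$:
\begin{align*}
(-1)^{\binom{n}{2}-1}\,p_*c_{sm}^{\hat{\tau}^\wedge_{n,k}}(-1)
&=\int_{G(k,n)} c(T_{G(k,n)})\cap[G(k,n)]-\int_{G(k,n)} c(T_{G(k,n)})\,\frac{c_{\binom{n-k}{2}}(\wedge^2 Q)}{c(\wedge^2 Q)}\cap[G(k,n)]\\
&=\binom{n}{k}-\chi(Z_{n,k}).
\end{align*}
The first integral is $\chi(G(k,n))=\binom{n}{k}$; the second, by the standard identity $\int_X c(T_X)\,c_{\mathrm{top}}(E)/c(E)=\chi(Z(s))$ for a generic section $s$ of a globally generated bundle $E$, is the Euler characteristic of the zero scheme $Z_{n,k}\subset G(k,n)$ of a generic global section of $\wedge^2 Q$. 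This is the essential difference with Proposition~\ref{prop; ordsm}, where the corresponding top Chern class $c_{n(n-k)}(Q^{n})$ vanished for dimension reasons; for $\wedge^2 Q$ it does not in general.

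The third step is to identify $Z_{n,k}$. A global section of $\wedge^2 Q=\wedge^2(V/S)$ is the image of an alternating form $\omega\in\wedge^2 V$, and it vanishes at $\Lambda\in G(k,n)$ precisely when $\omega\in\Lambda\wedge V$, which one checks is equivalent to $\Lambda^{\perp}\subset V^*$ being isotropic for the alternating form on $V^*$ determined by $\omega$. Hence $\Lambda\mapsto\Lambda^{\perp}$ identifies $Z_{n,k}$ with the Grassmannian of $(n-k)$-dimensional isotropic subspaces of $V^*$ for a generic alternating form, whose Euler characteristic is a classical Schubert-calculus quantity: counting the Schubert-type cells of these (possibly odd-dimensional) symplectic Grassmannians gives a power of $2$ times a binomial coefficient, and it vanishes outside the expected range of $n$ and $k$. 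Finally I would substitute these values into the inclusion--exclusion combination from the first step and simplify the resulting alternating sum of binomials, treating the even matrix size $2n$ and the odd matrix size $2n+1$ separately and keeping track of the sign $(-1)^{\binom{n}{2}-1}$; a Vandermonde/telescoping identity should collapse the sum to $(-1)^{n-k+1}\binom{n}{k}$.

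The main obstacle is precisely this final combination. Unlike the ordinary case the correction $\chi(Z_{n,k})$ is genuinely present in every term, so one must both evaluate the Euler characteristics of all the relevant isotropic Grassmannians and prove that, together with the transition coefficients taken from \cite{Xiping1}, they sum to the claimed closed form; the bookkeeping of the three independent parities here — matrix size, corank, and the ambient dimension $\binom{n}{2}$ — is where the care is needed.
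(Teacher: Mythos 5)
Your skeleton is the same as the paper's: realize $\hat{\tau}^\wedge_{n,k}\cong\PP(\wedge^2 Q^\vee)$, apply Lemma~\ref{Lemma; Segre} to get
\[
(-1)^{\binom{n}{2}-1}p_*c_{sm}^{\hat{\tau}^\wedge_{n,k}}(-1)=\binom{n}{k}-\int_{G(k,n)}\frac{c(S^\vee\otimes Q)\,c_{\binom{n-k}{2}}(\wedge^2 Q)}{c(\wedge^2 Q)}\cap[G(k,n)],
\]
and then invert a triangular transition matrix. Where you genuinely diverge is the evaluation of the remaining Schubert integral. The paper does not compute it: it shows the proposition is \emph{equivalent} to two explicit binomial-valued Schubert identities and imports those from \cite[Chapter~9]{Prom}, where they are proved topologically over $\CC$. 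You instead propose to evaluate the integral intrinsically as $\chi(Z(s))$ for a generic section of the globally generated bundle $\wedge^2 Q$, and to identify $Z(s)$, via $\Lambda\mapsto\Lambda^\perp$, with the (odd) symplectic isotropic Grassmannian of $(n-k)$-planes for a generic alternating form. This identification is correct, it parallels what the paper itself does for the symmetric case in \S\ref{S; enumaration}, and it is precisely the enumerative interpretation that the paper's closing remark says the author does not know for the skew-symmetric case — so this is a genuinely different and self-contained route for the key step. Two caveats: for odd ambient dimension $\chi(IG(j,2m+1))=2^{j}\binom{m}{j}+2^{j-1}\binom{m}{j-1}$ is a sum of two terms, not a single power of two times a binomial; and the final collapse to $(-1)^{n-k+1}\binom{n}{k}$ is asserted rather than carried out — it reduces to the identity $\sum_{r=k}^{n}(-1)^{n-r}\binom{2r}{2k}\binom{n}{r}=2^{2(n-k)}\binom{n}{2(n-k)}$ (zero when $2(n-k)>n$), which is true (take the $n$-th finite difference of the degree-$2k$ polynomial $r\mapsto\binom{2r}{2k}$) but must actually be proved.

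The one concrete error is your description of the fibers of $p$ over the deeper strata as isotropic Grassmannians. For the resolution that is isomorphic to $\PP(\wedge^2 Q^\vee)$ — the incidence $\{(\Lambda,X):\Lambda\subseteq\ker X\}$ — the fiber over a corank-$2r$ matrix is the ordinary Grassmannian $G(2k,2r)$ of subspaces of the kernel, with Euler characteristic $\binom{2r}{2k}$ (resp.\ $G(2k+1,2r+1)$ in the odd case). The locus of \emph{all} $2k$-dimensional isotropic subspaces of a corank-$2r$ form is strictly larger (already for a symplectic form on $k^4$ it is $LG(2,4)$ with $\chi=4$, not a point), and with that incidence condition $p$ would not even be birational. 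Since these fiber Euler characteristics are exactly the entries of the transition matrix you invert in the first step, plugging in isotropic-Grassmannian Euler characteristics there would make the final alternating sum come out wrong. With the fibers corrected to $G(2k,2r)$ and the binomial identity above verified, your argument closes.
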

\begin{proof}
Recall the Tjurina diagram for skew-symmetric rank loci. 
\[
\begin{tikzcd}
 & \hat{\tau}^\wedge_{m,k} \arrow{r}{} \arrow{d}{p} \arrow{dl}{q} & G(k,n) \times \PP(\wedge^2 k^m) \arrow{d} \\
G(k,n) & \tau^\wedge_{m,k} \arrow{r}{} & \PP(\wedge^2 k^m).
\end{tikzcd}
\]
Here $\hat{\tau}^\wedge_{m,k}$ is isomorphic to the projective bundle $\PP(\wedge^2 Q^\vee)$. Thus we have
\[
(-1)^{\binom{m}{2}-1} p_*(c_{sm}^{\hat{\tau}^\wedge_{m,k}})(-1)=\int_{\PP(\wedge^2 Q^\vee)} \frac{c(S^\vee\otimes Q)c(\wedge^2 Q^\vee)\otimes \cO(1))}{c(\cO(1))} \cap 
[\PP(\wedge^2 Q^\vee)]
\]
By Lemma~\ref{Lemma; Segre} we have
\begin{align*}
(-1)^{\binom{m}{2}-1} p_*(c_{sm}^{\hat{\tau}^\wedge_{m,k}})(-1)
=& \int_{G(k,m)} c(S^\vee\otimes Q)\cdot \left(1-\frac{c_{top}(\wedge^2 Q)}{c(\wedge^2 Q)}   \right) \cap [G(k,m)]\\
=& \binom{m}{k}-\int_{G(k,m)} \frac{c(S^\vee\otimes Q)c_{top}(\wedge^2 Q)}{c(\wedge^2 Q)} \cap [G(k,m)] 
\end{align*}

For any $A\in  \tau^{\wedge \circ}_{m,r}$, the fiber $p^{-1}(A)$ is isomorphic to the Grassmannian $G(k,r)$. Notice that $A$ can only have even degree, we need to consider $m=2n$ and $m=2n+1$ cases separately.
First we consider the even case. The pushforward of constructible functions shows that
\begin{align*}
(-1)^{\binom{2n}{2}-1} p_*(c_{sm}^{\hat{\tau}^\wedge_{2n,2k}})(-1)
=&\sum_{r=k}^{n-1} \binom{2r}{2k} (-1)^{\binom{2n}{2}-1}c_{sm}^{\tau^{\wedge\circ}_{2n,2r}}(-1) \\
=& \sum_{r=k}^{n-1} \binom{2r}{2k} \sm_{2n,2r} \/.
\end{align*}
When $m=2n+1$, we have
\[
(-1)^{\binom{2n+1}{2}-1} p_*(c_{sm}^{\hat{\tau}^\wedge_{2n+1,2k+1}})(-1)=\sum_{r=k}^{n-1} \binom{2r+1}{2k+1} \sm_{2n+1,2r+1} \/.
\]
Notice that the matrices $[\binom{2r}{2k}]_{1\leq r,k\leq n-1}$ and $[\binom{2r+1}{2k+1}]_{1\leq r,k\leq n-1}$ are invertible, thus it amounts to prove the following Schubert identities:
\begin{align*}
\int_{G(2k,2n)} \frac{c(S^\vee\otimes Q)c_{\binom{2n-2k}{2}}(\wedge^2 Q)}{c(\wedge^2 Q)} \cap [G(2k,2n)] 
=&  \sum_{r=k}^{n} (-1)^{n-k} \binom{2r}{2k}\binom{n}{r} 
\\
\int_{G(2k+1,2n+1)} \frac{c(S^\vee\otimes Q)c_{\binom{2n-2k}{2}}(\wedge^2 Q)}{c(\wedge^2 Q)} \cap [G(2k+1,2n+1)] 
=& \sum_{r=k}^{n} (-1)^{n-k} \binom{2r+1}{2k+1}\binom{n}{r} 
\end{align*}
When the base field is $\CC$, this was proved in \cite[Chapter 9]{Prom}. Since the equations are nothing but binomial identities, thus they hold for arbitrary field $k$ of charcteristic $0$.
\end{proof}

Mimicking the proof for the ordinary rank loci case we have:
\begin{theo}
\label{theo; SkewEuler}
The local Euler obstructions of skew-symmetric rank loci are
\[
Eu_{\tau^\wedge_{2n,2k}}(\tau^{\wedge \circ}_{2n,2r})=Eu_{\tau^\wedge_{2n+1,2k+1}}(\tau^{\wedge\circ}_{2n+1,2r+1})=\binom{r}{k} \/.
\]
\end{theo}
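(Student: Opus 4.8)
The plan is to mirror the inductive argument used for Theorem~\ref{theo; OrdEuler}, now feeding in the skew-symmetric $c_{sm}$ invariants from Proposition~\ref{prop; smSkew}. By the product-structure part of Proposition~\ref{prop; rankloci}(4), the local Euler obstructions $Eu_{\tau^\wedge_{m,k}}(\tau^{\wedge\circ}_{m,r})$ depend only on the difference data and not on $m$, so it suffices to work level by level and the two parity families ($m=2n$ and $m=2n+1$) can be handled simultaneously; I will write $e^\wedge_{k,r}$ for the common value and show $e^\wedge_{k,r}=\binom{r}{k}$ by induction on $r-k$ (equivalently on the ambient level). The base case is $e^\wedge_{k,k}=1=\binom{k}{k}$, which holds because $\bar{\cO}_{k,k}=\{0\}$ and the Euler obstruction of a point is $1$.

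For the inductive step I would invoke the recursive formula from Theorem~\ref{theo; EulerObstruction}, namely
\[
e^\wedge_{k,n}=Eu_{\bar{\cO}^\wedge_{n,k}}(0)=\sum_{r=k}^{n-1}\sm^\wedge_{n,r}\cdot e^\wedge_{k,r},
\]
and substitute $\sm^\wedge_{2n,2r}=\sm^\wedge_{2n+1,2r+1}=(-1)^{n-r+1}\binom{n}{r}$ together with the inductive hypothesis $e^\wedge_{k,r}=\binom{r}{k}$ for $r<n$. This reduces the claim to the binomial identity
\[
\sum_{r=k}^{n-1}(-1)^{n-r+1}\binom{n}{r}\binom{r}{k}=\binom{n}{k},
\]
which is exactly the same alternating-sum collapse that appeared in the ordinary case: pulling out $\binom{n}{k}$ via the absorption identity $\binom{n}{r}\binom{r}{k}=\binom{n}{k}\binom{n-k}{r-k}$, the remaining sum is a (nearly complete) signed binomial expansion of $(1-1)^{n-k}$ with the top term omitted, leaving $\binom{n}{k}$.

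The only real care needed is bookkeeping with parities: one must check that the flag-index $r$ in the recursion ranges over values of the correct parity in each of the two families, so that the invariants $\sm^\wedge$ are the ones supplied by Proposition~\ref{prop; smSkew}, and that the binomial collapse goes through identically after reindexing $2r\mapsto r$ (resp.\ $2r+1\mapsto r$); since Proposition~\ref{prop; smSkew} gives the two families the same closed form $(-1)^{n-k+1}\binom{n}{k}$ in the rescaled indices, the computation is literally the ordinary-case computation and produces $\binom{r}{k}$. I expect the main (minor) obstacle to be making this reindexing and the "omitted top term" in $(1-1)^{n-k}$ completely explicit so the signs match; there is no conceptual difficulty, as everything is a direct consequence of Theorem~\ref{theo; EulerObstruction} and Proposition~\ref{prop; smSkew}.
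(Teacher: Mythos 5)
Your proposal is correct and is exactly the paper's argument: the paper's entire proof of Theorem~\ref{theo; SkewEuler} is the single line ``Mimicking the proof for the ordinary rank loci case,'' i.e.\ running the recursion of Theorem~\ref{theo; EulerObstruction} with the invariants of Proposition~\ref{prop; smSkew} in the rescaled (parity-separated) indices and collapsing the same alternating binomial sum. Your parity bookkeeping and the identity $\sum_{r=k}^{n-1}(-1)^{n-r+1}\binom{n}{r}\binom{r}{k}=\binom{n}{k}$ both check out.
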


\subsection{Symmetric Matrix} 
Unlike the ordinary and skew-symmetric case, the Euler obstructions of complex symmetric rank loci are unknown. Thus we compute them here.

In this section we have $V_n=Sym^2 k^n$ to be the space of all symmetric $n\times n$ matrices, and $G_n=GL_n(k)$ acts on $V_n$
by $PAP^t$. The group orbits consist of  symmetric matrices of rank $n-k$ for $k=0,1,\cdots n-1$, and  are
denoted by 
$\Sigma^{S \circ}_{n,k}$. Let $\tau^{S \circ}_{n,k}$  be their projectivizations in  $\PP(Sym^2 k^n)$.
Let $\Sigma^S_{n,k}$ and $\tau^S_{n,k}$ be their closures. The  product structure shows that
\[
Eu_{\Sigma^S_{n,k}}(\Sigma^{S \circ}_{n,k+j})=Eu_{\Sigma^S_{m,k}}(\Sigma^{S \circ}_{m,k+j})=Eu_{\tau^S_{n,k}}(\tau^{S \circ}_{n,k+j}) \/.
\]
for $k+j<n$. First we compute the $c_{sm}$ indices $\sm_{n,k}$.
\begin{prop}
\label{prop; smSym}
For any $1\leq k\leq m-1$, we have the following formula
\begin{align*}
\sm_{m,k}:=(-1)^{\binom{m+1}{2}-1} \cdot c_{sm}^{\tau^{S \circ}_{m,k} }(-1)=
\begin{cases}
(-1)^{n-i+1} \cdot \binom{n}{i} & k=2i, m=2n\\
0  &  k=2i+1, m=2n \\
(-1)^{n-i} \binom{n}{i} &   k=2i, m=2n+1\\
(-1)^{n-i+1} \binom{n}{i} & k=2i+1, m=2n+1
\end{cases}
\end{align*}
\end{prop}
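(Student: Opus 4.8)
The plan is to mimic the ordinary and skew-symmetric cases (Propositions~\ref{prop; ordsm} and~\ref{prop; smSkew}): resolve $\tau^{S\circ}_{m,k}$ by its Tjurina transform $\hat{\tau}^S_{m,k}\cong\PP(Sym^2 Q^\vee)$, push $c_{sm}$ down through this projective bundle, apply the Segre computation of Lemma~\ref{Lemma; Segre}, and finish with binomial identities. The feature that makes the symmetric answer parity-dependent is that the relevant enumerative integral over the Grassmannian turns out to be the Euler characteristic of an isotropic (orthogonal) Grassmannian.

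From the Tjurina diagram, with $\cO(1)$ on $\hat{\tau}^S_{m,i}=\PP(Sym^2 Q^\vee)$ pulled back from $\PP^N=\PP(Sym^2 k^m)$, the fibre of $p$ over a corank-$r$ point is $G(i,r)$, so $p_*\ind_{\hat{\tau}^S_{m,i}}=\sum_{r\geq i}\binom{r}{i}\ind_{\tau^{S\circ}_{m,r}}$; inverting this unitriangular relation and applying $c_*$ (using that $\hat{\tau}^S_{m,i}$ is smooth) gives, as in \cite{Xiping1},
\[
c_{sm}^{\tau^{S\circ}_{m,k}}(H)=\sum_{i=k}^{m-1}(-1)^{i-k}\binom{i}{k}\,p_*c_{sm}^{\hat{\tau}^S_{m,i}}(H).
\]
So it suffices to compute $(-1)^{\binom{m+1}{2}-1}p_*c_{sm}^{\hat{\tau}^S_{m,i}}(-1)$, which by $T_{\hat{\tau}^S_{m,i}}=p^*(S^\vee\otimes Q)\oplus T_{\PP/G}$ equals
\[
\int_{\PP(Sym^2 Q^\vee)}\frac{c(S^\vee\otimes Q)\,c(Sym^2 Q^\vee\otimes\cO(1))}{c(\cO(1))}\cap[\PP(Sym^2 Q^\vee)];
\]
Lemma~\ref{Lemma; Segre} with $E=Sym^2 Q^\vee$ (so $E^\vee=Sym^2 Q$), together with $\int_{G(i,m)}c(T_{G(i,m)})=\binom{m}{i}$, then reduces this to $\binom{m}{i}-\int_{G(i,m)}\frac{c(T_{G(i,m)})\,c_{top}(Sym^2 Q)}{c(Sym^2 Q)}\cap[G(i,m)]$.

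The crux is this last integral. Since $Sym^2 Q$ is globally generated (a quotient of $Sym^2 k^m$), the section it inherits from a nondegenerate quadratic form $q$ vanishes at $\Lambda$ exactly when the $(m-i)$-dimensional annihilator $\Lambda^{\perp}\subset(k^m)^\vee$ is $q$-isotropic, and for generic $q$ this zero scheme is the isotropic Grassmannian, reduced and smooth of the expected dimension $i(m-i)-\binom{m-i+1}{2}$. Hence $c_{top}(Sym^2 Q)$ is its class, its normal bundle is $Sym^2 Q$ restricted there, and the integral equals the Euler characteristic of that Grassmannian, which is $2^{\,m-i}\binom{\lfloor m/2\rfloor}{m-i}$ (a torus fixed-point count, vanishing when $m-i>\lfloor m/2\rfloor$; the two spinor components in the case $m$ even, $m-i=m/2$ are still accounted for by this formula). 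So for $i=1,\dots,m-1$,
\[
\sum_{r=i}^{m-1}\binom{r}{i}\,\sm_{m,r}=\binom{m}{i}-2^{\,m-i}\binom{\lfloor m/2\rfloor}{m-i}.
\]
(One could instead evaluate the integral purely by Schubert calculus, as in the skew-symmetric case, where it again reduces to the same binomial identity.)

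Inverting this unitriangular system with $[(-1)^{i-k}\binom{i}{k}]$, the $\binom{m}{i}$-part contributes the ordinary value $(-1)^{m-k+1}\binom{m}{k}$, and collecting terms gives $\sm_{m,k}=(-1)^{m-k+1}\sum_{j\geq0}(-2)^{j}\binom{\lfloor m/2\rfloor}{j}\binom{m-j}{k}$. By the generating function
\[
\sum_{j\geq0}(-2)^{j}\binom{n}{j}(1+x)^{m-j}=(1+x)^{m-n}\bigl((1+x)-2\bigr)^{n}=(1+x)^{m-n}(x-1)^{n}
\]
with $n=\lfloor m/2\rfloor$, the sum is $[x^k](x^2-1)^n$ when $m=2n$ — equal to $(-1)^{n-i}\binom{n}{i}$ for $k=2i$ and $0$ for $k$ odd — and $[x^k](1+x)(x^2-1)^n$ when $m=2n+1$ — equal to $(-1)^{n-i}\binom{n}{i}$ whether $k=2i$ or $k=2i+1$; multiplying by $(-1)^{m-k+1}$ yields the four stated cases. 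I expect the main obstacle to be making the geometric identification of the integral with the Euler characteristic of the isotropic Grassmannian rigorous over an arbitrary algebraically closed field of characteristic $0$ (regularity of a generic section of $Sym^2 Q$, reducedness and smoothness of its zero scheme, and the Euler characteristic count, especially the split even case); the combinatorial bookkeeping afterwards is routine.
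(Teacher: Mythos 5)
Your proposal is correct, and its skeleton coincides with the paper's proof: Tjurina transform, the inclusion--exclusion formula for $c_{sm}^{\tau^{S\circ}_{m,k}}$, Lemma~\ref{Lemma; Segre} to reduce everything to $\int_{G(i,m)}\frac{c(S^\vee\otimes Q)\,c_{top}(Sym^2Q)}{c(Sym^2Q)}$, and a final binomial inversion. The one genuine divergence is how that integral is evaluated. The paper first dualizes $G(i,m)\cong G(m-i,m)$ to rewrite it with $Sym^2 S^\vee$, identifies the zero scheme of a generic section with the Fano scheme $Z_{d,n}$ of projective $d$-planes in a smooth quadric, and computes $\chi(Z_{d,n})$ in \S\ref{S; enumaration} by a recursion obtained from stratifying by incidence with a generic hyperplane ($F(d,n)=F(d,n-1)+(1+(-1)^{n-d})F(d-1,n-1)$); it also needs a separate Schubert-calculus argument (via Lascoux's formula for $c_{top}(Sym^2S^\vee)$ and the vanishing of the relevant Schur determinant) to handle the range where the Fano scheme is empty. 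You instead stay on the $Q$-side, identify the zero locus as the isotropic Grassmannian $OG(m-i,m)$ of annihilators, and read off $\chi=2^{m-i}\binom{\lfloor m/2\rfloor}{m-i}$ from a torus fixed-point (Bruhat cell) count --- the same space as the paper's Fano scheme, so the numbers agree, but your evaluation is more direct, and the empty case is absorbed into the vanishing of the binomial (together with the observation that a nowhere-vanishing section kills $c_{top}$). You also close the final step cleanly with the generating function $(1+x)^{m-\lfloor m/2\rfloor}(x-1)^{\lfloor m/2\rfloor}$, where the paper only states that the binomial identities ``can be proved by induction.'' The point you flag as the remaining obstacle (regularity and reducedness of the zero scheme of the section of $Sym^2Q$, and the fixed-point count over an arbitrary algebraically closed field of characteristic $0$) is exactly what the paper delegates to \cite{Altman-Kleiman} and to the homogeneity/cell structure of the isotropic Grassmannian, so nothing essential is missing.
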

\begin{proof}
Consider the Tjurina transform diagram  of symmetric rank loci:
\[
\begin{tikzcd}
 & \hat{\tau}^S_{n,k} \arrow{r}{} \arrow{d}{p} \arrow{dl}{q} & G(k,n) \times \PP(Sym^2 k^n) \arrow{d} \\
G(k,n) & \tau^S_{n,k} \arrow{r}{} & \PP(Sym^2 k^n) \/.
\end{tikzcd}
\]
Here $\hat{\tau}^S_{n,k}$ is isomorphic to the projective bundle $\PP(Sym^2 Q^\vee)$ over $G(k,n)$, for $Q$ being the universal quotient bundle.
Notice that for any $A\in \tau^{S \circ}_{m,k+j}$, its fiber $p^{-1}(A)$ is isomorphic to $G(k,k+j)$. Thus we have
\[
p_*c_{sm}^{\hat{\tau}^S_{m,k}}(H)=\sum_{j=k}^{m-1} \binom{j}{k} c_{sm}^{\hat{\tau}^{S \circ}_{m,j} }(H)
\]
Following the same argument in ordinary case we have
\[
c_{sm}^{\tau^{S \circ}_{m,k}}(H)= \sum_{j=k}^{m-1} (-1)^{j-k}\binom{j}{k} p_*c_{sm}^{\hat{\tau}^S_{m,j}} (H).
\]
Thus it amounts to compute the indices $p_*c_{sm}^{\hat{\tau}^S_{m,k+i}}(-1)$. Since $\hat{\tau}^S_{m,k}\cong \PP(Sym^2 Q^\vee)$, let $\cO(1)$ be the tautological line bundle
we have
\[
(-1)^{\binom{m+1}{2}-1}p_*c_{sm}^{\hat{\tau}^S_{m,k}}(-1)=\int_{\PP(Sym^2 Q^\vee)} \frac{c(S^\vee\otimes Q)c(Sym^2 Q^\vee)\otimes \cO(1))}{c(\cO(1))} \cap 
[\PP(Sym^2 Q^\vee)]
\]
By Lemma~\ref{Lemma; Segre} we have
\begin{align*}
(-1)^{\binom{m+1}{2}-1}p_*c_{sm}^{\hat{\tau}_{n,k}}(-1)
=&\int_{\PP(Sym^2 Q^\vee)} \frac{c(S^\vee\otimes Q)c(Sym^2 Q^\vee\otimes \cO(1))}{c(\cO(1))} \cap [\PP(Sym^2 Q^\vee)]\\
=& \int_{G(k,n)} c(S^\vee\otimes Q)\cdot \left(1-\frac{c_{top}(Sym^2 Q)}{c(Sym^2 Q)}   \right) \cap [G(k,n)]\\
=& \binom{n}{k}-\int_{G(k,n)} \frac{c(S^\vee\otimes Q)c_{top}(Sym^2 Q)}{c(Sym^2 Q)} \cap [G(k,n)]
\end{align*}

\begin{lemm}
We have the following Schubert formula.
\begin{align*}
\int_{G(k,2n)} \frac{c(S^\vee\otimes Q)c_{top}(Sym^2 Q)}{c(Sym^2 Q)}=& \sum_{r=k-n}^{n} \binom{n}{r}\binom{r}{k-n}=2^{2n-k}\cdot \binom{n}{k-n} \\
\int_{G(k,2n+1)} \frac{c(S^\vee\otimes Q)c_{top}(Sym^2 Q)}{c(Sym^2 Q)}=& \sum_{r=k-n-1}^n \binom{n}{r}\binom{r}{k-n-1}=2^{2n-k+1}\cdot \binom{n}{k-n-1} \/.
\end{align*}
\end{lemm}
\begin{proof}[Proof of Lemma]
Recall the standard duality  isomorphism
\[
\phi\colon G_1:=G(k,n)=G(k,V)\to G(n-k,n)=G(n-k, V^\vee)=:G_2 \/.
\]
For $i=1,2$ we denote $S_i, Q_i$ to be the universal sub and quotient bundles. Then $\phi^*S_2=Q_1^\vee$ and $\phi^*Q_2=S_1^\vee$. Thus we have
\[
\int_{G(k,2n)} \frac{c(S^\vee\otimes Q)c_{top}(Sym^2 Q)}{c(Sym^2 Q)}=\int_{G(2n-k,2n)} \frac{c(Q\otimes S^\vee)c_{top}(Sym^2 S^\vee)}{c(Sym^2 S^\vee)} \/.
\]
Thus it amounts to prove that 
\begin{align*}
\int_{G(2n-k,2n)} \frac{c(Q\otimes S^\vee)c_{top}(Sym^2 S^\vee)}{c(Sym^2 S^\vee)}&=2^{2n-k}\cdot \binom{n}{k-n}  \\
\int_{G(2n-k+1,2n+1)} \frac{c(Q\otimes S^\vee)c_{top}(Sym^2 S^\vee)}{c(Sym^2 S^\vee)}&=  2^{2n-k+1}\cdot \binom{n}{k-n-1} \/.
\end{align*}
We leave the proof of above Schubert identities to Corollary~\ref{coro; Schubert}, where we identify such Schubert integrations as the Euler characteristics of certain moduli spaces, and compute them using geometry method.
\end{proof}
Thus from the Lemma we have
\begin{align*}
(-1)^{\binom{m+1}{2}-1}p_*c_{sm}^{\hat{\tau}^S_{2n,k}}(-1)=& \binom{2n}{k}-     2^{2n-k}\cdot \binom{n}{k-n}  \\
(-1)^{\binom{m+1}{2}-1}p_*c_{sm}^{\hat{\tau}^S_{2n+1,k}}(-1)=&\binom{2n+1}{k}- 2^{2n-k+1}\cdot \binom{n}{k-n-1}   \/.
\end{align*}
Here we make the convention that $\binom{n}{k}=0$ whenever $k<0$.

Then for $m=2n$  we have
\begin{align*}
\sm_{2n,k}=& \sum_{j=k}^{2n-1} (-1)^{j-k}\binom{j}{k} p_*c_{sm}^{\hat{\tau}_{2n,j}} (-1) \\
=& \sum_{j=k}^{2n-1} (-1)^{j-k}\binom{j}{k} \left(\binom{2n}{j}-  2^{2n-j}\cdot \binom{n}{j-n} \right)  \\
=& 
\begin{cases}
(-1)^{n-i+1} \cdot \binom{n}{i} & k=2i\\
0  &  k=2i+1
\end{cases}
\/.
\end{align*}
 For $m=2n+1$ we have
\begin{align*}
\sm_{2n+1,k}=& \sum_{j=k}^{2n} (-1)^{j-k}\binom{j}{k} p_*c_{sm}^{\hat{\tau}_{2n+1,j}} (-1) \\
=& \sum_{j=k}^{2n} (-1)^{j-k}\binom{j}{k} \left(\binom{2n+1}{j}-  2^{2n+1-j}\cdot \binom{n}{j-n-1} \right)  \\
=& 
\begin{cases}
(-1)^{n-i} \binom{n}{i} &   k=2i\\
(-1)^{n-i+1} \binom{n}{i} & k=2i+1
\end{cases}
\/.
\end{align*}
The last steps are  binomial identities that can be proved by induction. We leave the details to the readers. 
\end{proof}

Now we   proceed to compute the local Euler obstructions. 
\begin{theo}
\label{theo; SymEuler}
Let $e_{k,n}$ be the local Euler obstruction of $\Sigma^S_{n,k}$ at $0$. Then we have
\begin{align*}
e_{k,m}=
\begin{cases}
\binom{n}{i}   &   k=2i, m=2n\\
 0 & k=2i+1, m=2n \\
\binom{n}{i}   &   k=2i, m=2n+1\\
\binom{n}{i} & k=2i+1, m=2n+1 \\
\end{cases} 
\end{align*}
\end{theo}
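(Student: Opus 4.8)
The plan is to combine the recursion from Theorem~\ref{theo; EulerObstruction} with the $c_{sm}$ invariants $\sm_{m,k}$ just computed in Proposition~\ref{prop; smSym}. Since the symmetric rank loci $\Sigma^S_{n,k}$ form a recursive group orbit stratification (Assumption~\ref{assum; orbits}, verified in Proposition~\ref{prop; rankloci}(4)), Theorem~\ref{theo; EulerObstruction} gives the recursion
\[
e_{k,m}=\sum_{r=k}^{m-1}\sm_{m,r}\cdot e_{k,r},
\]
with initial condition $e_{k,k}=1$. So the whole proof reduces to an induction on $m$, feeding in the four-case formula for $\sm_{m,r}$.

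First I would set up the induction: assume $e_{i,j}$ has the stated form for all $j<m$, and split into the cases $m=2n$ and $m=2n+1$, and within each, $k$ even versus odd. For $m=2n$ and $k=2i$, the sum over $r$ from $2i$ to $2n-1$ picks up $\sm_{2n,2r'}=(-1)^{n-r'+1}\binom{n}{r'}$ against $e_{2i,2r'}=\binom{r'}{i}$, plus the odd-$r$ terms where $\sm_{2n,2r'+1}=0$ kills them; the claim $e_{2i,2n}=\binom{n}{i}$ then follows from the standard alternating binomial identity $\sum_{r'}(-1)^{n-r'+1}\binom{n}{r'}\binom{r'}{i}$ evaluating correctly (this is the same type of identity used in the proof of Theorem~\ref{theo; OrdEuler}, just with $n$ in place of $2n-k$ and the middle index halved). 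For $m=2n$ and $k=2i+1$, I would show the sum telescopes to $0$: the even-$r$ terms contribute $\sm_{2n,2r'}\binom{r'-?}{}$ but $e_{2i+1,2r'}=0$ by the induction hypothesis, and the odd-$r$ terms have $\sm_{2n,2r'+1}=0$, so every summand vanishes, giving $e_{2i+1,2n}=0$. The cases $m=2n+1$ are analogous but now none of the $\sm$'s vanish; both $k=2i$ and $k=2i+1$ should collapse to $\binom{n}{i}$ via a similar alternating-sum identity, using that $e_{k,j}$ for $j=2n$ even already has the vanishing-on-odd-$k$ shape, which is what makes the odd-$k$ count still come out to $\binom{n}{i}$.

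The main obstacle I expect is bookkeeping the four interlocking cases correctly—in particular making sure the parity of the intermediate orbit index $r$ is tracked consistently so that the "$0$ on even size, odd rank" phenomenon propagates properly through the recursion, and that the binomial identities are applied with the right shifted arguments. These identities are all elementary (variants of $\sum_j (-1)^j\binom{n}{j}\binom{j}{i}=(-1)^i[n=i]$ and its relatives), so I would state them as a short lemma or simply cite the analogous computation in Proposition~\ref{prop; ordsm} and Theorem~\ref{theo; OrdEuler}, and leave the routine verification to the reader, exactly as was done for the closing binomial identities in Proposition~\ref{prop; smSym}. No genuinely new idea is needed beyond the recursion already established; the content is entirely in organizing the case analysis.
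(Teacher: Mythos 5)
Your proposal is correct and is essentially the paper's own proof: the paper likewise runs an induction using the recursion $e_{k,m}=\sum_{r=k}^{m-1}\sm_{m,r}\,e_{k,r}$ (via Proposition~\ref{prop; generalizedBLS}) together with the values from Proposition~\ref{prop; smSym}, splitting into the same four parity cases, killing the even-size/odd-rank case by the vanishing of $\sm_{2n,2j+1}$ and of $e_{2k+1,2j}$ from the induction hypothesis, and finishing the remaining cases with the alternating binomial identity $\sum_j(-1)^{j-k+1}\binom{j}{k}\binom{n}{j}$. The only cosmetic difference is that the paper indexes the induction by $n$ with an explicit $n=1$ base case rather than by $m$ with $e_{k,k}=1$, and it is equally terse about the odd-size cases.
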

\begin{proof}
We  prove  by induction on $n$.
When $n=1$, Proposition~\ref{prop; generalizedBLS} shows that
\begin{align*}
e_{1,2}=& e_{1,1}\cdot \sm_{2,1} =0 \\
e_{1,3}=& e_{1,2}\cdot \sm_{3,2}+e_{1,1}\cdot \sm_{3,1}=0\cdot 1+1\cdot 1=1 =\binom{1}{0}\\
e_{2,3}=& e_{2,2}\cdot \sm_{3,2}=1\cdot 1=\binom{1}{1}
\end{align*}
Assume that the theorem holds true for all $n\leq N$ and all $k\leq 2N+1$. For $n=N+1$, combine Proposition~\ref{prop; generalizedBLS} and Proposition~\ref{prop; smSym} we have
\begin{align*}
e_{2k+1,2n}=& \sum_{j=2k+1}^{2n-1} e_{2k+1,j}\cdot \sm_{2n,j} \\
=& \sum_{j=k+1}^{n-1} e_{2k+1,2j}\cdot \sm_{2n,2j}+ \sum_{j=k}^{n-1} e_{2k+1,2j+1}\cdot \sm_{2n,2j+1}  \\
=& \sum_{j=k+1}^{n-1} 0\cdot \sm_{2n,2j}+ \sum_{j=k}^{n-1} e_{2k+1,2j+1}\cdot 0  \\
=& 0 \\
e_{2k,2n}=& \sum_{j=2k }^{2n-1} e_{2k ,j}\cdot \sm_{2n,j} \\
=& \sum_{j=k}^{n-1} e_{2k,2j}\cdot \sm_{2n,2j}+ \sum_{j=k}^{n-1} e_{2k,2j+1}\cdot \sm_{2n,2j+1}  \\
=& \sum_{j=k}^{n-1} e_{2k,2j}\cdot \sm_{2n,2j}= \sum_{j=k}^{n-1} (-1)^{j-k+1}\cdot \binom{j}{k} \binom{n}{j} \\
=& \sum_{j=k}^{n-1} (-1)^{n-j+1} \binom{j}{k}\cdot \binom{n}{j}  
= \binom{n}{k}\cdot  \sum_{j=k}^{n-1} (-1)^{j-k+1} \binom{n-k}{j-k} \\
=& \binom{n}{k}
\end{align*} 
The proof for $e_{2k,2n+1}=e_{2k+1,2n+1}=\binom{n}{k}$ are exactly the same. Thus we have proved the induction step, and hence the theorem. 
\end{proof}

\section{Sectional Euler Characteristic of Rank Loci}
\label{S; SecEulerRankLoci}
In this section we apply Theorem~\ref{theo; EulerChern} to compute the sectional Euler characteristic of matrix rank loci.  
\subsection{Ordinary Matrix}
Recall that  the dual variety of $\tau_{n,k}\subset \PP^{n^2-1}$ is exactly $\tau_{n,n-k}\subset \PP^{* n^2-1}$, with dimensions $d=\dim\tau_{n,k} = (n+k)(n-k)-1$ and 
$d^*=\dim \tau_{n,n-k} = (2n-k)k-1$. We have the following.
\begin{theo}
\label{theo; secEulerOrd}
Let $L$ be a hyperplane correspond to $l\in \tau_{n,r}^\circ\in \PP^{^* n^2-1}$. Then we have
\[
\chi(\tau_{n,k}^\circ\cap L)=
\begin{cases}
\sum_{i=k}^{n } (-1)^{i-k} \binom{i}{k}\binom{r}{n-i}   & k<n-1\\
r -n +n^2-1 & k=n-1 
\end{cases} \/.
\]
Here we make the convention that $\binom{a}{b}=0$ whenever $b<0$.
\end{theo}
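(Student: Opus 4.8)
The plan is to specialize Theorem~\ref{theo; sectionalEuler} to the $GL_n\times GL_n$ action on $M_n=(k^n)^{\otimes 2}$ and then carry out a short binomial manipulation. First I would check that the ordinary rank loci satisfy Assumption~\ref{assum; orbits2}: the corank strata $\Sigma^\circ_{n,k}$ are finite in number, their closures are totally ordered, the smallest is $\{0\}$, and by Proposition~\ref{prop; rankloci}(5) one has $\tau_{n,k}^\vee=\tau_{n,n-k}$, so $\PP(\bar{\cO}_i)^\vee=\PP(\bar{\cO}'_{n-i})$ with $\cO'_j$ the corank-$j$ stratum in the dual space. Since the dual representation is again $GL_n\times GL_n$ acting on $n\times n$ matrices, the dual stratification is isomorphic to the original one; in particular the dual Euler obstructions agree with the original ones, $e'_{i,j}=e_{i,j}=\binom{j}{i}$ by Theorem~\ref{theo; OrdEuler} (this also follows from Lemma~\ref{lemm. duality}).

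Next I would collect the remaining inputs of Theorem~\ref{theo; sectionalEuler}. The matrix $E=[e_{i,j}]=\bigl[\binom{j}{i}\bigr]$ is upper-triangular unipotent, and classical binomial inversion gives $A=E^{-1}=\bigl[(-1)^{i-j}\binom{j}{i}\bigr]$, i.e.\ $\alpha_{k,i}=(-1)^{i-k}\binom{i}{k}$. The dimensions are $d_i=d'_i=n^2-i^2$ by Proposition~\ref{prop; rankloci}(2) and $N=n^2$, so $d_i+d'_{n-i}+N=2(n^2+ni-i^2)$ is even and every sign $(-1)^{d_i+d'_{n-i}+N}$ appearing in the formula equals $1$. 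The $c_{sm}$ invariants are $\sm_{n,k}=(-1)^{n+1-k}\binom{n}{k}$ (Proposition~\ref{prop; ordsm}), and the orbit Euler characteristics are $\chi(\tau^\circ_{n,k})=0$ for $k<n-1$ while $\chi(\tau^\circ_{n,n-1})=n^2$ by Proposition~\ref{prop; rankloci}(6), the latter since $\tau^\circ_{n,n-1}$ is the Segre variety $\PP^{n-1}\times\PP^{n-1}$.

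Substituting all of this into Theorem~\ref{theo; sectionalEuler}, with $L$ the hyperplane corresponding to $l\in\tau^\circ_{n,r}$ and the sum taken over the range in which the summand does not automatically vanish (recall $\alpha_{k,i}=0$ for $i<k$, and $\binom{r}{n-i}=0$ unless $n-i\le r$), I obtain
\[
\chi(\tau^\circ_{n,k}\cap L)=\sum_{i=k}^{n-1}(-1)^{i-k}\binom{i}{k}\binom{r}{n-i}+\chi(\tau^\circ_{n,k})-\sm_{n,k}.
\]
For $k<n-1$ the correction term equals $-(-1)^{n+1-k}\binom{n}{k}=(-1)^{n-k}\binom{n}{k}$, which is exactly the $i=n$ term of the same sum (since $\binom{r}{0}=1$); absorbing it extends the range to $k\le i\le n$ and produces the first branch. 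For $k=n-1$ only the term $i=n-1$ survives in the sum, contributing $\binom{r}{1}=r$, while the correction term is $\chi(\tau^\circ_{n,n-1})-\sm_{n,n-1}=n^2-n$, giving $\chi(\tau^\circ_{n,n-1}\cap L)=r+n^2-n$; this can be cross-checked directly, since a hyperplane through a corank-$r$ point meets $\PP^{n-1}\times\PP^{n-1}$ in a locus of Euler characteristic $r+n^2-n$.

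I expect the genuine content to lie entirely in the results already established---Theorem~\ref{theo; sectionalEuler}, Theorem~\ref{theo; OrdEuler}, and Proposition~\ref{prop; ordsm}---so that what remains is essentially bookkeeping. The only steps I would verify with care are the index matching for the dual orbits (which of $e$, $e'$, $d_i$, $d'_{n-i}$ enters where) and the parity count that collapses every sign to $1$; an error there would corrupt the entire formula. Recognizing $\chi(\tau^\circ_{n,k})-\sm_{n,k}$ as the missing extreme term of the binomial sum is the single combinatorial observation needed, and it is immediate.
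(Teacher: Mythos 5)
Your proposal is correct and takes essentially the same route as the paper: the paper does not invoke Theorem~\ref{theo; sectionalEuler} by name but re-derives the same matrix identity directly from Theorem~\ref{theo: Radon}, inverts $E=\bigl[\binom{j}{i}\bigr]$ by binomial inversion, and adds the correction $\chi(\tau_{n,k}^\circ)-\sm_{n,k}$ exactly as you do; your sign check and your observation that the correction term is the $i=n$ term of the sum match the paper's computation.

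One point worth flagging: in the case $k=n-1$ you obtain $\chi(\tau_{n,n-1}^\circ\cap L)=r+n^2-n$, whereas the theorem as stated gives $r+n^2-n-1$. Your value is the correct one. It agrees with the paper's own penultimate display ($\beta^n_{n-1,r}=r$, $-\sm_{n,n-1}=-n$, $\chi(\tau^\circ_{n,n-1})=n^2$, summing to $r-n+n^2$), and with the direct check on the Segre variety: for $n=2$, $r=1$ the section is the union of two rulings of $\PP^1\times\PP^1$ meeting in a point, with Euler characteristic $3$, not $2$. So the discrepancy is an arithmetic slip in the paper's final stated case, not a gap in your argument.
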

\begin{proof}
Theorem~\ref{theo: Radon} shows that
\[
(Eu_{\tau_{n,k}})^\vee = Eu_{\tau_{n,n-k}}+e\ind_{\PP^{n^2-1}}
\]
Thus for any $r$ we have 
\[
\sum_{j=0}^{n-1} Eu_{\tau_{n,k}}(\tau_{n,j}^\circ)\left(\chi^n_{k,r}-\chi^n_{k,0} \right)=Eu_{\tau_{n,n-k}}(\tau_{n,r}^\circ) \/.
\]
Recall that $Eu_{\tau_{n,k}}(\tau_{n,j}^\circ)=e_{k,j}=\binom{j}{k}$. Let $\beta^n_{k,r}:=\left(\chi^n_{k,s}-\chi^n_{k,0} \right)$ we have
\[
\left(
\begin{array}{ccccc}
\binom{1}{1} & \binom{2}{1} & \binom{3}{1}     &\cdots & \binom{n-1}{1}  \\
0 & \binom{2}{2} & \binom{3}{2} & \cdots     & \binom{n-1}{2} \\
\cdots & \cdots & \cdots & \cdots  & \cdots  \\
0 & 0  & 0 &  \cdots & \binom{n-1}{n-1}
\end{array}
\right)
\cdot 
\left(
\begin{array}{ccc}
\beta^n_{1,1} & \cdots & \beta^n_{1,n-1}   \\
\beta^n_{2,1}  &\cdots  & \beta^n_{2,n-1}  \\
\cdots & \cdots & \cdots   \\
\beta^n_{n-1,1} & \cdots & \beta^n_{n-1,n-1}
\end{array}
\right)
=
\left(
\begin{array}{ccccc}
0 & 0 &\cdots & 0        &  \binom{n-1}{n-1}  \\
0 & 0 &   \cdots & \binom{n-2}{n-2} & \binom{n-1}{n-2} \\
\cdots &   \cdots & \cdots & \cdots  & \cdots  \\
\binom{1}{1} & \binom{2}{1} & \binom{3}{1}     &\cdots & \binom{n-1}{1}  \\
\end{array}
\right) . 
\] 
Since the inverse matrix of $[\binom{i}{j}]_{1\leq i,j\leq n-1}$ is  $[(-1)^{j-i} \binom{i}{j}]_{1\leq i,j\leq n-1}$, we then have
\begin{align*}
\beta^n_{k,r}=\sum_{i=k}^{n-1} (-1)^{i-k} \binom{i}{k}\binom{r}{n-i} 
\end{align*}

Recall that Proposition~\ref{prop; csm(-1)} shows:
\[
\chi^{n}_{k, r}=   \beta^n_{k,r}+\chi^{n}_{k, 0}  
=   \beta^n_{k,r}+ \chi(\tau_{n,k}^\circ)-\sm_{n,k} \/.
\] 
From Proposition~\ref{prop; ordsm}  we have $\sm_{n,k}=(-1)^{n+1-k}\binom{n}{k}$, and hence
\begin{align*}
\chi^{n}_{k, r}
=& \sum_{i=k}^{n-1} (-1)^{i-k} \binom{i}{k}\binom{r}{n-i} +(-1)^{n -k}\binom{n}{k}+ \chi(\tau_{n,k}^\circ)\\
=& 
\begin{cases}
\sum_{i=k}^{n-1} (-1)^{i-k} \binom{i}{k}\binom{r}{n-i} +(-1)^{n-k}\binom{n}{k} & k<n-1\\
r -n +n^2-1 & k=n-1 
\end{cases}
\end{align*}
\end{proof}
%\begin{exam}
%Let $n=3$.  let $L=V(\sum_{i,j=1}^3 a_{i,j})\in \tau_{3,3,2}^\circ\in \PP^{* 8}$. We have
%\[
%\chi(\tau_{3,3,0}^\circ \cap L)= 2, \quad \chi(\tau_{3,3,1}^\circ \cap L)= -8, \quad \chi(\tau_{3,3,2} \cap L)= 14.
%\]
%This matches the fact the $L\sim \PP^7$ and $\chi(L)=8$.
%\end{exam}

\subsection{Skew-Symmetric Matrix}
Now we consider skew-symmetric case. 
When $m=2n+1$,  the total space $\PP(\wedge^2 k^{2n+1})$ is of dimension $n(2n+1)-1$. 
The rank loci $\tau^\wedge_{2n+1,2k+1}$ is dual to $\tau^\wedge_{2n+1,2n-2k-1}$, and they are of dimensions $(n-k)(2n+2k+1)-1$ and $(k+1)(4n-2k-1)-1$ respectively. When $m=2n$ is even, the total space $\PP(\wedge^2 k^{2n})$ is of dimension $n(2n-1)-1$. 
The rank loci $\tau^\wedge_{2n,2k}$ is dual to $\tau^\wedge_{2n,2n-2k}$, and they are of dimensions $(n-k)(2n+2k-1)-1$ and $(k+1)(4n-2k-1)-1$.
Thus we have the following:
\begin{theo}
\label{theo; secEulerskew}
Let $L_B$ be a hyperplane correspond to $l\in \tau_{m,B}^\circ\in \PP^{^* \binom{m}{2}-1}$. Then we have
\begin{align*}
\chi(\tau^{\wedge \circ}_{2n,2k} \cap L_{2r})  
=& 
\chi(\tau^{\wedge \circ}_{2n+1,2k+1}\cap L_{2r+1}) \\
=&
\begin{cases}
\sum_{i=k}^{n-1} (-1)^{i-k+1} \binom{i}{k}\binom{r}{n-i}  + (-1)^{n-k}\binom{n}{k}  & k<n-1\\
 2n^2-1-r & k=n-1 
\end{cases}
\end{align*}
Here we make the convention that $\binom{a}{b}=0$ whenever $b<0$.
\end{theo}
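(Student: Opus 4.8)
The plan is to carry over the method used for the ordinary rank loci in the proof of Theorem~\ref{theo; secEulerOrd}, feeding it the skew-symmetric inputs. To lighten notation I state everything for $m=2n$, the case $m=2n+1$ being obtained by the substitution $2k\mapsto 2k+1$; write $e_{k,j}$ for the common local Euler obstruction $Eu_{\tau^\wedge_{m,2k}}(\tau^{\wedge\circ}_{m,2j})$, which by Theorem~\ref{theo; SkewEuler} equals $\binom{j}{k}$ in both parities, and set $\chi^m_{k,r}=\chi(\tau^{\wedge\circ}_{m,2k}\cap L_{2r})$ and $\beta^m_{k,r}=\chi^m_{k,r}-\chi^m_{k,0}$, so that $\chi^m_{k,0}$ is the generic sectional Euler characteristic by Corollary~\ref{coro; generic}. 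The first step is to apply Ernstr\"{o}m's theorem (Theorem~\ref{theo: Radon}) to the dual pair $\tau^\wedge_{m,2k}$ and $\tau^\wedge_{m,2n-2k}$, which is legitimate by Proposition~\ref{prop; rankloci}(5). Evaluating $(Eu_{\tau^\wedge_{m,2k}})^\vee$ at a hyperplane $L_{2r}\in\tau^{\wedge\circ}_{m,2r}$ of the dual space, expanding through the strata, and subtracting the generic value removes the error term and turns the statement into the matrix identity
\[
[\,e_{k,j}\,]_{1\le k,j\le n-1}\cdot[\,\beta^m_{k,r}\,]_{1\le k,r\le n-1}=\bigl[\,(-1)^{\sigma_k}\,e_{n-k,r}\,\bigr]_{1\le k,r\le n-1}\/,
\]
whose right side is an anti-triangular matrix of dual Euler obstructions and $\sigma_k$ is the dimension parity occurring in Ernstr\"{o}m's theorem; the generic orbit $k=0$ is then recovered from $\tau^\wedge_{m,0}=\PP^{N}$ by inclusion-exclusion, exactly as in the ordinary case.

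Since the left-hand matrix is the unipotent Pascal matrix $[\binom{j}{k}]$, with inverse $[(-1)^{j-k}\binom{j}{k}]$, solving gives $\beta^m_{k,r}$ as an explicit alternating sum in $\binom{i}{k}$ and $\binom{r}{n-i}$. Next, using Proposition~\ref{prop; csm(-1)} to rewrite $\chi^m_{k,0}=\chi(\tau^{\wedge\circ}_{m,2k})-\sm_{m,2k}$, I would substitute $\sm_{m,2k}=(-1)^{n-k+1}\binom{n}{k}$ from Proposition~\ref{prop; smSkew} and the generic Euler characteristic $\chi(\tau^{\wedge\circ}_{m,2k})$ from Proposition~\ref{prop; rankloci}(6), which vanishes for $k<n-1$. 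The resulting double sum collapses via a standard alternating-binomial identity of the form $\sum_{j=0}^{m}(-1)^{j}\binom{m}{j}=0^{m}$, precisely as in the ordinary proof, leaving the closed form in the statement. The boundary stratum $k=n-1$ is handled separately: there $\Sigma^\wedge_{m,2n-2}$ is smooth away from the origin (Proposition~\ref{prop; rankloci}(3)), so $\tau^\wedge_{m,2n-2}$ is the smooth Pl\"{u}cker-embedded Grassmannian $G(2,m)$ with $Eu_{\tau^\wedge_{m,2n-2}}=\ind$, and Ernstr\"{o}m's theorem together with Proposition~\ref{prop; rankloci}(6) yields the affine formula at once. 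The odd case $m=2n+1$ is structurally identical; only the ambient dimension $N=\binom{2n+1}{2}-1$ and the dimension formulas change, and one checks that $\sigma_k$ comes out the same, which is why the two formulas coincide.

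No new geometry is required beyond the data already assembled for the Euler-obstruction computation; the argument is linear algebra together with binomial identities. The delicate point I expect to be the main obstacle is the sign bookkeeping: one must compute the Ernstr\"{o}m exponent $\sigma_k$ from the dimensions $\dim\tau^\wedge_{2n,2k}=(n-k)(2n+2k-1)-1$ and $\dim\tau^\wedge_{2n,2n-2k}=k(4n-2k-1)-1$ (Proposition~\ref{prop; rankloci}(2)) and their odd analogues, and then check that it interacts with the sign of $\sm_{m,2k}$ so that in the final answer the free term $\binom{n}{k}$ carries the sign $(-1)^{n-k}$ and the summands carry the sign displayed; the remaining task is the binomial collapse, which is routine in both parities.
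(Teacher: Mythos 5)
Your proposal is correct and follows essentially the same route as the paper's proof: apply Ernstr\"{o}m's Radon transform to the dual pair, express the result as a matrix identity with the unipotent Pascal matrix $[\binom{j}{k}]$ of Euler obstructions from Theorem~\ref{theo; SkewEuler}, invert it, and recover the generic term via Proposition~\ref{prop; csm(-1)} together with $\sm_{m,2k}$ from Proposition~\ref{prop; smSkew} and the Euler characteristics from Proposition~\ref{prop; rankloci}. The only difference is cosmetic (the paper works out the odd case first and asserts the even case is identical, while you do the reverse), and your flagged concern about the Ernstr\"{o}m sign exponent is exactly the point the paper resolves by computing it to be $-1$ in both parities.
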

\begin{proof}
First we consider the odd case $m=2n+1$.
Theorem~\ref{theo: Radon} shows that
\[
(Eu_{\tau^\wedge_{2n+1,2k+1}})^\vee=(-1)\cdot Eu_{\tau^\wedge_{2n+1,2n-2k-1}}+e\ind_{\PP(\wedge^2 k^{2n+1})}
\]
Thus for $1\leq k,s\leq n-1$   we have
\[
\sum_{r=k}^{n-1}Eu_{\tau^\wedge_{2n+1, 2k+1}} (\tau^{\wedge \circ}_{2n+1, 2r+1}) \left(\chi^{2n+1}_{2r+1, 2s+1}-\chi^{2n+1}_{2r+1, 1}\right)
= -Eu_{\tau^\wedge_{2n+1, 2n-2k-1}} (\tau^{\wedge \circ}_{2n+1,2s+1}) \/.
\]
Define  $\beta^{2n+1}_{2k+1, 2s+1}=\chi^{2n+1}_{2r+1, 2s+1}-\chi^{2n+1}_{2r+1, 1}$. Recall from Theorem~\ref{theo; SkewEuler} that
\[
Eu_{\tau^\wedge_{2n+1, 2k+1}} (\tau^{\wedge \circ}_{2n+1, 2r+1})=\binom{r}{k} \/.
\]
Then we have
\[
\left(
\begin{array}{ccccc}
\binom{1}{1} & \binom{2}{1} & \binom{3}{1}     &\cdots & \binom{n-1}{0}  \\
0 & \binom{2}{2} & \binom{3}{2} & \cdots     & \binom{n-1}{2} \\
\cdots & \cdots & \cdots & \cdots  & \cdots  \\
0 & 0  & 0 &  \cdots & \binom{n-1}{n-1}
\end{array}
\right)
\cdot 
\left(
\begin{array}{ccc}
\beta^{2n+1}_{3,1}  &\cdots  & \beta^{2n+1}_{3,2n-1}  \\
\cdots & \cdots & \cdots   \\
\beta^{2n+1}_{2n-1,1} & \cdots & \beta^{2n+1}_{2n-1,2n-1}
\end{array}
\right)
=
-\left(
\begin{array}{ccccc}
0 & 0 &\cdots & 0        &  \binom{n-1}{n-1}  \\
0 & 0 &   \cdots & \binom{n-2}{n-2} & \binom{n-2}{n-1} \\
\cdots &   \cdots & \cdots & \cdots  & \cdots  \\
\binom{1}{1} & \binom{2}{1} & \binom{3}{1}     &\cdots & \binom{n-1}{0}  
\end{array}
\right) . 
\] 
Since the inverse matrix of $[\binom{i}{j}]_{1\leq i,j\leq n-1}$ is  $[(-1)^{j-i} \binom{i}{j}]_{1\leq i,j\leq n-1}$, we then have
\begin{align*}
\beta^{2n+1}_{2k+1,2r+1}=\sum_{i=k}^{n-1} (-1)^{i-k+1} \binom{i}{k}\binom{r}{n-i}
\end{align*}
Thus 
\begin{align*}
\chi^{2n+1}_{2k+1, 2r+1}= &  \beta^{2n+1}_{2k+1,2r+1}+\chi^{2n+1}_{2k+1, 1} \\
=& \sum_{i=k}^{n-1} (-1)^{i-k+1} \binom{i}{k}\binom{r}{n-i}+\chi^{2n+1}_{2k+1, 1}  \\
=& \sum_{i=k}^{n-1} (-1)^{i-k+1} \binom{i}{k}\binom{r}{n-i}+\chi_{\tau^{\wedge \circ}_{2n+1,2k+1}} -\sm_{2n+1, 2k+1} \\
=& 
\begin{cases}
\sum_{i=k}^{n-1} (-1)^{i-k+1} \binom{i}{k}\binom{r}{n-i}  + (-1)^{n-k}\binom{n}{k}  & k<n-1\\
 2n^2-1-r & k=n-1 
\end{cases}
\end{align*}
The last step follows from Proposition~\ref{prop; smSkew} we have the formula in the statement. 
When $m=2n$ is even, Theorem~\ref{theo: Radon} shows that
\[
(Eu_{\tau^\wedge_{2n+1,2k+1}})^\vee=(-1)\cdot Eu_{\tau^\wedge_{2n+1,2n-2k-1}}+e\ind_{\PP(\wedge^2 k^{2n+1})}
\]
The same argument shows that
\begin{align*}
\chi^{2n}_{2k, 2r}= &   
\begin{cases}
\sum_{i=k}^{n-1} (-1)^{i-k+1} \binom{i}{k}\binom{r}{n-i}+ (-1)^{n-k}\binom{n}{k} & k<n-1\\
 2n^2-1-r & k=n-1 
\end{cases}
\end{align*}
\end{proof}

\subsection{Symmetric Matrix}
Now we compute the sectional Euler characteristic of symmetric rank loci. We have the following theorem.
\begin{theo}
\label{theo; sectionalSym}
Let $\tau_{m,A}^{S \circ}$ be the  orbits  in $\PP(Sym^2 k^m)$. For any $l_B\in (\tau^{S \circ}_{m,B})\subset \PP(Sym^2k^{m \vee}) $, let $L_B$ be the corresponding hyperplane. Then for $A,B$ from $1$ to $m-1$ we have
\begin{align*}
\chi(\tau_{m,A}^{S \circ}\cap L_B)=
\begin{cases}
\sum_{l=1}^n (-1)^{l-k} \binom{l}{k}\binom{r}{n+1-l}-(-1)^{n-k}\binom{n+1}{k} +\chi^S_{2n+2,2k} &   A=2k, B=2r, m=2n+2   \\
\chi^S_{2n+2,2k+1} &  A=2k+1, B=2r, m=2n+2   \\
\sum_{l=1}^n (-1)^{l-k} \binom{l}{k}\binom{r+1}{n+1-l} - (-1)^{n-k}\binom{n+1}{k} +\chi^S_{2n+2,2k} &   A=2k, B=2r+1, m=2n+2  \\
\sum_{l=1}^n (-1)^{l-k+1} \binom{l}{k}\binom{r}{n-l}+\chi^S_{2n+2,2k} & A=2k+1, B=2r+1  , m=2n+2  \\
\sum_{l=1}^n (-1)^{l-k} \binom{l}{k}\binom{r}{n+1-l} - (-1)^{n-k}\binom{n}{k} + \chi^S_{2n+1,2k} &  A=2k   B=2r, m=2n+1   \\
 (-1)^{n-k+1}\binom{n}{k} +\chi^S_{2n+1,2k+1} &  A=2k+1, B=2r, m=2n+1   \\
 \sum_{l=0}^{n-1} (-1)^{l-k+1} \binom{l}{k-1}\binom{r}{n-l} - (-1)^{n-k}\binom{n}{k} +\chi^S_{2n+1,2k} &   A=2k, B=2r+1, m=2n+1 \\
\sum_{l=1}^{n-1} (-1)^{l-k} \binom{l}{k}\binom{r}{n-l} - (-1)^{n-k+1}\binom{n}{k} +\chi^S_{2n+1,2k+1} & A=2k+1, B=2r+1, m=2n+1  
\end{cases}
\end{align*}
Here $\chi^S_{m,A}$ are defined in Proposition~\ref{prop; rankloci}-$vii$. 
 We make the convention that $\binom{a}{b}=0$ whenever $b<0$.
\end{theo}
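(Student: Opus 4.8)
The plan is to run, for symmetric rank loci, the same Radon--transform argument that established the ordinary case (Theorem~\ref{theo; secEulerOrd}); every numerical input is already available from Theorem~\ref{theo; SymEuler}, Proposition~\ref{prop; smSym} and Proposition~\ref{prop; rankloci}. Fix the matrix size $m$ and an integer $A$ with $1\le A\le m-1$, and abbreviate $\chi^{m}_{A,B}:=\chi(\tau^{S\circ}_{m,A}\cap L_B)$ for $L_B\in\tau^{S\circ}_{m,B}\subset\PP(Sym^2 k^{m\vee})$. By Proposition~\ref{prop; rankloci}(v) the dual of $\tau^{S}_{m,A}$ is $\tau^{S}_{m,m-A}$, so Ernstr\"om's Theorem~\ref{theo: Radon} gives
\[
(Eu_{\tau^{S}_{m,A}})^{\vee}=(-1)^{\varepsilon_{m,A}}\,Eu_{\tau^{S}_{m,m-A}}+e\,\ind_{\PP(Sym^2 k^{m\vee})},
\]
where the exponent $\varepsilon_{m,A}$ is the one from Theorem~\ref{theo: Radon}, computed from the dimensions in Proposition~\ref{prop; rankloci}(ii) (it simplifies to $\varepsilon_{m,A}\equiv A(m-A)\pmod 2$, so the sign is $+1$ when $m$ is odd and $(-1)^{A}$ when $m$ is even). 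Evaluating both sides at $L_B$ and at a generic hyperplane $H$ and subtracting (which cancels the $e\,\ind$--term, since a generic $H$ lies outside the proper subvariety $\tau^{S}_{m,m-A}$) yields, for $1\le A,B\le m-1$,
\[
\sum_{j\ge A} e_{A,j}\,\bigl(\chi^{m}_{j,B}-\chi^{m}_{j,0}\bigr)=(-1)^{\varepsilon_{m,A}}\,Eu_{\tau^{S}_{m,m-A}}(\tau^{S\circ}_{m,B}),
\]
in which $e_{A,j}=Eu_{\tau^{S}_{m,A}}(\tau^{S\circ}_{m,j})=Eu_{\tau^{S}_{j,A}}(0)$ and $Eu_{\tau^{S}_{m,m-A}}(\tau^{S\circ}_{m,B})=Eu_{\tau^{S}_{B,m-A}}(0)$ are read off Theorem~\ref{theo; SymEuler}: they equal $\binom{\lfloor j/2\rfloor}{\lfloor A/2\rfloor}$, resp.\ $\binom{\lfloor B/2\rfloor}{\lfloor(m-A)/2\rfloor}$, except that they vanish in the parity combination ``first index odd, second index even''. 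Equivalently one could invoke Theorem~\ref{theo; sectionalEuler} together with the Duality Lemma~\ref{lemm. duality}, since symmetric rank loci satisfy Assumptions~\ref{assum; orbits} and~\ref{assu; reflective}.

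The next step is to solve this linear system for the matrix $\beta^{m}:=[\chi^{m}_{A,B}-\chi^{m}_{A,0}]_{A,B}$. The matrix $E=[e_{A,j}]_{1\le A,j\le m-1}$ is upper triangular with $1$'s on the diagonal, hence invertible; because of the vanishing of its odd--row/even--column entries, after sorting indices by parity it splits into Pascal--type blocks (together with one extra nonzero off--diagonal block when $m$ is odd). Using that the inverse of the Pascal matrix $[\binom{i}{j}]$ is $[(-1)^{j-i}\binom{i}{j}]$ — exactly as in the ordinary case of Theorem~\ref{theo; secEulerOrd} — one inverts $E$ block by block and multiplies against the anti--triangular right--hand side $[(-1)^{\varepsilon_{m,A}}Eu_{\tau^{S}_{m,m-A}}(\tau^{S\circ}_{m,B})]$. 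This produces a closed alternating--binomial expression for each $\beta^{m}_{A,B}$; the split of the statement into eight cases is precisely the bookkeeping of the parities of $A$, $B$ and $m$ (which parity block of $E^{-1}$ is active, whether the relevant $Eu$ entry is one of the vanishing ones, and the value of $(-1)^{\varepsilon_{m,A}}$), and the shifted binomials $\binom{l}{k-1}$ appearing in the odd--$m$/even--$A$ cases come from the extra off--diagonal block of $E^{-1}$.

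Finally I would reassemble $\chi^{m}_{A,B}=\beta^{m}_{A,B}+\chi^{m}_{A,0}$. By Proposition~\ref{prop; csm(-1)} the generic section value is
\[
\chi^{m}_{A,0}=\chi(\tau^{S\circ}_{m,A}\cap H)=\chi(\tau^{S\circ}_{m,A})-\sm_{m,A}=\chi^{S}_{m,A}-\sm_{m,A},
\]
with $\chi^{S}_{m,A}$ taken from Proposition~\ref{prop; rankloci}(vii) and $\sm_{m,A}$ from Proposition~\ref{prop; smSym}; this accounts for the trailing constant $+\chi^{S}_{m,A}$ and the terms $\pm(-1)^{n-k}\binom{n}{k}$, $\pm(-1)^{n-k}\binom{n+1}{k}$ (or $0$) appearing in each case. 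What remains are elementary binomial identities — Vandermonde collapses of sums such as $\sum_{l}(-1)^{l-k}\binom{l}{k}\binom{r}{n+1-l}$ and the reindexing $r\mapsto r+1$ that distinguishes $B$ even from $B$ odd — which I would verify by induction on $n$, applying the convention $\binom{a}{b}=0$ for $b<0$ throughout.

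The step I expect to be the genuine obstacle is the inversion of $E$: because the even--size, odd--rank symmetric loci carry local Euler obstruction $0$ against the even--rank loci, $E$ is not a Pascal matrix but a \emph{defective} one, so $E^{-1}$ is not simply the signed Pascal matrix and one must track carefully how its even-- and odd--indexed rows feed into the anti--triangular right--hand side. Getting the eight signs $(-1)^{\varepsilon_{m,A}}$ and the off--diagonal contribution of $E^{-1}$ exactly right is where the care lies; the concluding binomial manipulations, while lengthy, are routine.
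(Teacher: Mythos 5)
Your proposal follows essentially the same route as the paper's proof: apply Ernström's Radon transform with the sign $(-1)^{(m-A)(m+1)}$ (your $\varepsilon_{m,A}\equiv A(m-A)$ is equivalent), invert the parity-defective Euler-obstruction matrix $E$ against the anti-triangular right-hand side to get $\beta^m_{A,B}$, and reassemble via $\chi^m_{A,0}=\chi^S_{m,A}-\sm_{m,A}$. You also correctly isolate the genuine difficulty — the explicit inverse of the defective Pascal matrix — which the paper likewise handles by stating the inverse in an unproved lemma.
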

\begin{proof}
We follow the proof as in ordinary matrix case. Recall that for $k=1,\cdots ,m-1$ the variety $\tau^S_{m,k}$ is dual to $\tau^S_{m,m-k}$, with dimensions $\frac{(m-k)(m+k+1)}{2}-1$ and $\frac{k(2m-k+1)}{2}-1$ respectively. Here $\PP(Sym^2 k^m)=\PP^{N}$, where$N=\binom{m+1}{2}-1$. Thus from Theorem~\ref{theo: Radon} we have
\[
(Eu_{\tau^S_{m,k}})^\vee=(-1)^{(m-k)(m+1)} Eu_{\tau^S_{m,m-k}}+e\ind_{\PP^N}
\]
We denote $\beta^{m}_{i,j}$ to be the difference $\chi({\tau_{m,j}^{S \circ}\cap L_j})-\chi({\tau_{m,j}^{S \circ}\cap H})$, where $L_j\in \tau_{m,j}^{S \circ}$ in the dual space and $H$ is a generic hyperplane.
First we consider the case that $m=2n+2$ is even. 
Apply Theorem~\ref{theo; SymEuler} we have
\begin{small}
\[
\left(
\begin{array}{cccccc}
\binom{0}{0} & 0 & \binom{1}{0} & 0     &\cdots & \binom{n}{0}  \\
0 & \binom{1}{1} & \binom{1}{1} & \cdots & \binom{n}{1} & \binom{n}{1} \\
\cdots & \cdots & \cdots & \cdots & \cdots  & \cdots  \\
0 & 0  & 0 & 0 & \cdots & \binom{n}{n}
\end{array}
\right)
\cdot 
\left(
\begin{array}{ccc}
\beta^{2n+2}_{1,1} & \cdots & \beta^{2n+2}_{1,2n+1}   \\
\beta^{2n+2}_{2,1}  &\cdots  & \beta^{2n+2}_{2,2n+1}  \\
\cdots & \cdots & \cdots   \\
\beta^{2n+2}_{2n+1,1} & \cdots & \beta^{2n+2}_{2n+1,2n+1}
\end{array}
\right)
=
\left(
\begin{array}{cccccc}
0 & 0 &\cdots & 0 & 0      & -\binom{n}{n}  \\
\cdots & \cdots & \cdots & \cdots & \cdots  & \cdots  \\
0 & \binom{1}{1} & \binom{1}{1} & \cdots & \binom{n}{1} & \binom{n}{1} \\
-\binom{0}{0} & 0  & -\binom{1}{0} & 0 & \cdots & -\binom{n}{0}
\end{array}
\right) . 
\] 
\end{small}
\begin{lemm}
The inverse matrix of 
\[
\left(
\begin{array}{cccccc}
\binom{0}{0} & 0 & \binom{1}{0} & 0     &\cdots & \binom{n}{0}  \\
0 & \binom{1}{1} & \binom{1}{1} & \cdots & \binom{n}{1} & \binom{n}{1} \\
\cdots & \cdots & \cdots & \cdots & \cdots  & \cdots  \\
0 & 0  & 0 & 0 & \cdots & \binom{n}{n}
\end{array}
\right)
\] is 
\[
\left(
\begin{array}{cccccccc}
\binom{0}{0} & 0 & -\binom{1}{0} & 0  & \binom{2}{0} & 0    &\cdots & \binom{n}{0}  \\
0 & \binom{1}{1} & -\binom{1}{1} &   -\binom{2}{1}& \binom{2}{1}&\cdots & -\binom{n}{1} & \binom{n}{1} \\
0 & 0 & \binom{1}{1} & 0 & -\binom{2}{1} & 0      &\cdots & -\binom{n}{1}  \\

\cdots & \cdots & \cdots & \cdots & \cdots & \cdots & \cdots  & \cdots  \\
0 & 0 & 0 & 0  & 0 & 0 & \cdots & \binom{n}{n}
\end{array}
\right)
\]
\end{lemm} 
The proof of the Lemma is straight binomial computation, and we omit it here. 
For any $A,B$ from $1$ to $2n+1$  we then have:
\begin{align*}
\beta^{2n+2}_{A,B}=
\begin{cases}
\sum_{l=1}^n (-1)^{l-k} \binom{l}{k}\binom{r}{n+1-l}   &   A=2k, B=2r\\
 0 &  A=2k+1, B=2r \\
\sum_{l=1}^n (-1)^{l-k} \binom{l}{k}\binom{r+1}{n+1-l}   &   A=2k, B=2r+1\\
\sum_{l=1}^n (-1)^{l-k+1} \binom{l}{k}\binom{r}{n-l} & A=2k+1, B=2r+1 
\end{cases} 
\end{align*}
For the case that $m=2n+1$, notice that
\[
(Eu_{\tau^S_{2n+1,k}})^\vee= Eu_{\tau^S_{2n+1,2n+1-k}}+e\ind_{\PP^N}
\]
Thus we have
\[
\left(
\begin{array}{cccccc}
\binom{0}{0} & 0 & \binom{1}{0} & 0     &\cdots & \binom{n}{0}  \\
0 & \binom{1}{1} & \binom{1}{1} & \cdots & \binom{n}{1} & \binom{n}{1} \\
\cdots & \cdots & \cdots & \cdots & \cdots  & \cdots  \\
0 & 0  & 0 & 0 & \cdots & \binom{n}{n}
\end{array}
\right)
\cdot 
\left(
\begin{array}{ccc}
\beta^{2n+1}_{1,1} & \cdots & \beta^{2n+1}_{1,2n+1}   \\
\beta^{2n+1}_{2,1}  &\cdots  & \beta^{2n+1}_{2,2n+1}  \\
\cdots & \cdots & \cdots   \\
\beta^{2n+1}_{2n+1,1} & \cdots & \beta^{2n+1}_{2n+1,2n+1}
\end{array}
\right)
=
\left(
\begin{array}{cccccc}
0 & 0 &\cdots & 0 & 0      & \binom{n}{n}  \\
\cdots & \cdots & \cdots & \cdots & \cdots  & \cdots  \\
0 & \binom{1}{1} & \binom{1}{1} & \cdots & \binom{n}{1} & \binom{n}{1} \\
\binom{0}{0} & 0  & \binom{1}{0} & 0 & \cdots & \binom{n}{0}
\end{array}
\right) . 
\] 
Mimicking the  computation for $m=2n+2$ we get
\begin{align*}
\beta^{2n+1}_{A,B}=
\begin{cases}
\sum_{l=1}^n (-1)^{l-k} \binom{l}{k}\binom{r}{n+1-l}   &   A=2k, B=2r\\
 0 &  A=2k+1, B=2r \\
 \sum_{l=0}^{n-1} (-1)^{l-k+1} \binom{l}{k-1}\binom{r}{n-l}   &   A=2k, B=2r+1\\
\sum_{l=1}^{n-1} (-1)^{l-k} \binom{l}{k}\binom{r}{n-l} & A=2k+1, B=2r+1 
\end{cases} 
\end{align*}
Recall that $\sm_{m,A}=\chi(\tau_{m,A}^{S \circ})-\chi(\tau_{m,A}^{S \circ}\cap H)$. We then have
\begin{align*}
\chi(\tau_{m,A}^{S \circ}\cap L_B)=&\beta^m_{A,B}+\chi(\tau_{m,A}^{S \circ}\cap H) \\
=& \beta^m_{A,B}+ \chi(\tau_{m,A}^{S \circ})- \sm_{m,A}    
\end{align*}
Proposition~\ref{prop; rankloci}-$7$ shows that for symmetric rank loci we have
\begin{align*}
\chi(\tau_{m,A}^{S \circ})=
\chi^S_{m,A}
\end{align*}
Thus combine with Proposition~\ref{prop; smSym} we have the formula in the theorem.
\end{proof}

\section{Stalk Euler Characteristic of the Intersection Cohomology Complex of Rank loci}
\label{S; stalkRankloci}
In this section we compute the stalk Euler characteristic of the Intersection Cohomology complex of rank loci. In this section we assume $k=\CC$. 
\subsection{Index Theorem}
Let $\sqcup_{i\in I} S_i$ be a (finite) Whitney stratification of a complex projective variety $X$ in $\PP^N$. The theory of Chern-Schwartz-MacPherson classes of constructible functions  can be viewed as the pushdown of the theory of characteristic cycles of constructible sheaves. In short, there is a map $CC$ from the Grothendieck group of derived category of constructible sheaves to the group of (conical) Lagrangian cycles of $T^* \PP^N$ with support in $X$, together with the following diagram
\[
\begin{tikzcd}
 K_0(D^b_{c}(X)) \arrow{r}{CC} \arrow{d}{\chi_x} & L(X) \arrow{dl}{\sim} \arrow{r}{[*]} \arrow{d}{} & A_{m-1}(\PP(T^*\PP^N)) \arrow{d}{sh}  \\
F(X) \arrow{r}{c_*} & A_*(X) \arrow{r}{i_*}&  A_*(\PP^N) 
\end{tikzcd}
\]
Here 
$[*]$ denotes the fundamental classes. 
The first vertical map $\chi_x$ takes the stalk Euler characteristic of a constructible sheaf complex, and the last vertical
map  
$
sh\colon A_{m-1}(\PP(T^*M))\to A_{*}(M)
$
is given by `casting the shadow ' process discussed in \cite{Aluffi3}.  

In fact the theory of characteristic cycles doesn't require projectivization or compactness: the above projective assumption on $X$ is totally due to the Chern-Schwartz-MacPherson theory. The characteristic cycles on a  projective stratification behave the same with the induced conical stratification on its affine cone. For the rest of this section we only assume $X=\sqcup_{i\in I} S_i\subset M$  is a Whitney stratified space.
For any stratum $S_i$, there is a naturally assigned constructible sheaf called the Intersection Cohomology Sheaf complex of $S_i$. 
We denote it by  $\IC_{\overline{S}_i}$, and call it the $\IC$ sheaf of $S_i$ for short.  This  complex can be obtained by a sequence of (derived truncated) pushforward of the local system on $S_i$ to  its closure along all the smaller strata. 
For details about the intersection cohomology sheaf and intersection homology we refer to \cite{G-M1} and \cite{G-M2}. 
For any constructible sheaf $\cF^\bullet$ on $X$, its characteristic cycle  has the form 
\[
CC(\cF^\bullet)=\sum_{i\in I} r_i(\cF^\bullet)[T^*_{\overline{S}_i}M] \/.
\]  
Here for any $i$,  $T^*_{\overline{S}_i}M$ is the conormal space of $\overline{S}_i$ defied as the closure of 
\[
\{(x, \lambda)| s\in S_i, \lambda (T_x S_i)=0\}\subset T^* M \/.
\]
The integer coefficients $r_i(\cF^\bullet)$ are called the \textit{Microlocal Multiplicities}: these are very important invariants in microlocal geometry, for details we refer to \cite[Chapter 9]{KS}. 
The following deep theorem from \cite[Theorem 3]{Dubson}, \cite[Theorem 6.3.1]{Ka} reveals the relation among the microlocal multiplicities  , the stalk Euler characterictic, and the local Euler obstructions :
\begin{theo}[Microlocal Index Formula]
\label{theo; microlocal}
For any $i,j\in I$, and any $x\in S_i$ we define 
\[
\chi_i(\cF^\bullet)=\sum_{p} (-1)^p\dim H^p(\cF^\bullet(x))
\]
to be the stalk Euler characteristic of $\cF^\bullet$. Let $e(j,i)=Eu_{\overline{S_i}}(S_j)$   be the local Euler obstructions. 
We have the following formula:
\[
\chi_j(\cF^\bullet)=\sum_{i\in I} (-1)^{\dim S_i} e(j,i)r_i(\cF^\bullet)  \/.
\]
\end{theo}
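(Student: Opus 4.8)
The plan is to deduce the formula from the compatibility of the characteristic cycle functor $CC$ with the local Euler obstruction, which is the sheaf-theoretic incarnation of Kashiwara's microlocal index theorem and is essentially what is being asserted in components. As a first step I would note that both sides of the claimed identity, regarded as functions of $\cF^\bullet$, are additive over distinguished triangles: the left side by the long exact sequence of stalk cohomology, the right side because $CC$ is additive and hence each microlocal multiplicity $r_i(-)$ is. Thus both descend to homomorphisms on $K_0(D^b_c(X))$. Moreover the map $\chi_x\colon K_0(D^b_c(X))\to F(X)$ of the diagram above --- sending $\cF^\bullet$ to its stalk Euler characteristic function $y\mapsto\chi(\cF^\bullet(y))$ --- is surjective, since $\ind_W$ is the stalk function of $\QQ_W$ for any closed $W$.

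The next step is to factor $CC$ through $\chi_x$. I would use the normalization of $CC$: near a generic point of a top-dimensional stratum $S$ in $\operatorname{supp}\cF^\bullet$, the complex $\cF^\bullet$ has locally constant cohomology, so the multiplicity of $CC(\cF^\bullet)$ along $T^*_{\overline S}M$ equals $(-1)^{\dim S}$ times the (locally constant) stalk Euler characteristic of $\cF^\bullet$ along $S$. Running an induction on the dimension of the support, this shows that $CC(\cF^\bullet)$ depends only on $\chi_x(\cF^\bullet)$, i.e. $CC$ factors as $\mathfrak{cc}\circ\chi_x$ for a well-defined homomorphism $\mathfrak{cc}\colon F(X)\to L(X)$. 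The same normalization shows $\mathfrak{cc}$ is ``upper triangular'' for the stratification: $\mathfrak{cc}(\ind_{\overline S_i})$ is supported on the conormal varieties $[T^*_{\overline S_j}M]$ with $S_j\subseteq\overline S_i$, with leading coefficient $(-1)^{\dim S_i}$ along $[T^*_{\overline S_i}M]$; since both $F(X)$ and $L(X)$ are free of the same (finite) rank, $\mathfrak{cc}$ is an isomorphism and its inverse $\Phi\colon L(X)\to F(X)$ is also triangular.

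The substantive input --- and the place where genuine singularity geometry enters --- is that $\mathfrak{cc}$ is in fact \emph{diagonal} in the Euler obstruction basis of $F(X)$:
\[
\mathfrak{cc}(Eu_{\overline S_i})=(-1)^{\dim S_i}\,[T^*_{\overline S_i}M],\qquad\text{equivalently}\qquad \Phi\big([T^*_{\overline S_i}M]\big)=(-1)^{\dim S_i}\,Eu_{\overline S_i}.
\]
Triangularity alone does not force the off-diagonal terms to vanish; the vanishing is exactly MacPherson's conormal/Nash-blow-up characterization of the local Euler obstruction (the characteristic cycle of the function $Eu_{\overline S_i}$ is the plain conormal variety of $\overline S_i$), in the reformulations of Kashiwara, Sabbah and Ginzburg. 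This refines the Chern--Mather statement, already invoked earlier, that $i_*c_M(\overline S_i)$ is the ``shadow'' of $(-1)^{\dim S_i}[T^*_{\overline S_i}M]$; note that the shadow map $L(X)\to A_*(X)$ is not injective, so one really needs the cycle-level identity and not merely its image in the Chow group. In a write-up I would cite this, with attention to reconciling the sign conventions of the topological $CC$ with those fixed for $c_*$ and $c_M$ in Section~\ref{S; preliminary}.

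Granting the diagonalization, the theorem follows formally. For $x\in S_j$ the factorization gives $\chi_j(\cF^\bullet)=\chi(\cF^\bullet(x))=\Phi\big(CC(\cF^\bullet)\big)(x)$, and since $CC(\cF^\bullet)=\sum_{i\in I}r_i(\cF^\bullet)\,[T^*_{\overline S_i}M]$ and $Eu_{\overline S_i}(x)=e(j,i)$, the right-hand side equals $\sum_{i\in I}(-1)^{\dim S_i}\,e(j,i)\,r_i(\cF^\bullet)$, which is the asserted formula. The main obstacle is precisely the diagonalization identity of the third paragraph together with the sign bookkeeping across the two normalizations; the additivity reduction and the factorization of $CC$ through $\chi_x$ are formal. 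Finally, the standing hypothesis $k=\CC$ is needed because the characteristic cycle and constructible-sheaf apparatus --- and the Brasselet--Dubson--Kashiwara identifications underlying it --- live in the complex-analytic topology.
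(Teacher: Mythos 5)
Your outline is sound, but it is worth noting that the paper does not prove this statement at all: it is quoted verbatim as a known deep theorem of Dubson and Kashiwara (\cite[Theorem 3]{Dubson}, \cite[Theorem 6.3.1]{Ka}), so there is no in-paper argument to compare against. What you have written is the standard modern proof scheme for that theorem, and the formal parts are correct: additivity of both sides over distinguished triangles, surjectivity of $\chi_x\colon K_0(D^b_c(X))\to F(X)$ via $\ind_W=\chi_x(\QQ_W)$, and the final evaluation $\Phi(CC(\cF^\bullet))(x)=\sum_i(-1)^{\dim S_i}e(j,i)r_i(\cF^\bullet)$ for $x\in S_j$, with signs matching the statement. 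You are also right to isolate the one non-formal input, the diagonalization $\mathfrak{cc}(Eu_{\overline S_i})=(-1)^{\dim S_i}[T^*_{\overline S_i}M]$, and to observe that triangularity alone does not give it; but since you only cite that identity (Kashiwara--Dubson--Sabbah--Ginzburg), your proposal is a correct reduction of the theorem to its essential content rather than a self-contained proof --- which is exactly the level of rigor the paper itself adopts. One small caution on the factorization step: an induction merely on the dimension of $\operatorname{supp}\cF^\bullet$ is not quite enough, because a complex can have large support while $\chi_x(\cF^\bullet)$ vanishes identically; the clean statement is that each multiplicity $r_i(\cF^\bullet)$ is computed by the Euler characteristic of a normal Morse datum at a generic point of $S_i$ with respect to a generic covector, and such Euler characteristics are $\ZZ$-linear functionals of the stalk function $\chi_x(\cF^\bullet)$ alone. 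With that phrasing the factorization $CC=\mathfrak{cc}\circ\chi_x$ is rigorous and the rest of your argument goes through.
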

This theorem suggests that if one knows about any two sets of the indexes, then one can compute the third one. 

\subsection{Main Result}
Now we consider rank loci. In \cite{Braden} the authors studied concretely the category of perverse sheaves on rank loci, and showed the following properties in \cite[Corollary 4.10, 4.11]{Braden}: 
\begin{prop}
For ordinary  matrix  rank loci with $i\geq 1$, and for skew-symmetric  matrix rank loci with $n-i< 2\lfloor \frac{n}{2} \rfloor$
, we have
\[
\pi_{1}(\Sigma^\circ_i)=\pi_{1}(\Sigma^{\wedge \circ}_i) =1
\]
Thus there are only trivial local systems. 
The characteristic cycles of the $\IC$ complexes are then irreducible, i.e., for $j\geq 1$ we have
\[
CC(\IC_{\Sigma_{n,j}}(\CC))=[T_{\Sigma_{n,j}}^* \CC^{n^2}];  \quad CC(\IC_{\Sigma^\wedge_{n,j}}(\CC))=[T_{\Sigma^\wedge_{n,j}}^* \CC^{\binom{n}{2}}] \/.
\]
For symmetric rank loci, when $n-1\geq i\geq 1$ we have
\[
\pi_{1}(\Sigma^{S \circ}_i) = \ZZ/2\ZZ;
\]
Thus for $n-1\geq i\geq 1$ there is a unique non-trivial rank $1$  local system $\cL_i$ on each $\Sigma^{S \circ}_i$, such that  $\cL_i^{\otimes 2}=\CC$. The characteristic cycles of the $\IC$ complexes are given by
\begin{align*}
CC(\IC_{\Sigma^S_{n,j}}(\CC)) =&
\begin{cases}
[T_{\Sigma^S_{n,j}}^* \CC^{\binom{n+1}{2}}] & j=2k \\
[T_{\Sigma^S_{n,j}}^* \CC^{\binom{n+1}{2}}] +  [T_{\Sigma_{n,j+1}}^* \PP^N]  & j=2k+1 \\
\end{cases}  \\
CC(\IC_{\Sigma^S_{n,j}}(\cL_j))
=&
\begin{cases}
[T_{\Sigma^S_{n,j}}^* \CC^{\binom{n+1}{2}}] & j=2k+1 \\
[T_{\Sigma^S_{n,j}}^* \CC^{\binom{n+1}{2}}] +[T_{\Sigma_{n,j+1}}^* \PP^N]  & j=2k
\end{cases} ;
\end{align*}
For $j=n$ we have $\Sigma^S_{n,n}=\{0\}$  and $CC(\IC_{\Sigma_{n,n}}) =[T_{0}^* \CC^{\binom{n+1}{2}}]$.
\end{prop}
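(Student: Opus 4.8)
The plan is to assemble the three assertions of the proposition---the fundamental groups of the strata, the classification of local systems, and the characteristic cycles of the $\IC$ complexes---from the orbit--bundle geometry of the rank loci, the microlocal index formula (Theorem~\ref{theo; microlocal}), and the local Euler obstructions already computed in Theorems~\ref{theo; OrdEuler}, \ref{theo; SkewEuler} and \ref{theo; SymEuler}. This reproduces the cited results of \cite{Braden}. First I would compute the fundamental groups. Each stratum is a single orbit, hence a homogeneous fibre bundle over a Grassmannian: $\Sigma^\circ_{n,k}$ fibres over $G(k,n)\times G(n-k,n)$ with fibre $GL_{n-k}$ (recording the kernel, the image, and the isomorphism between coimage and image), while $\Sigma^{\wedge\circ}_{n,k}$ and $\Sigma^{S\circ}_{n,k}$ fibre over $G(k,n)$ with fibre $GL_{n-k}/Sp_{n-k}$, respectively $GL_{n-k}/O_{n-k}$ (the radical, plus the nondegenerate form on the quotient). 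Running the homotopy exact sequence, with $\pi_1$ of a Grassmannian trivial, $\pi_2(G(a,b))=\ZZ$ for $0<a<b$, and $\pi_1$ of the fibre equal to $\ZZ$ in the ordinary and skew cases and to $\ZZ$ with a distinguished element of order two in the symmetric case (the discriminant character is not a square), the connecting map $\pi_2(\text{base})\to\pi_1(\text{fibre})$ is read off from the first Chern class of the relevant determinant or Pfaffian line bundle on the Grassmannian, which is $\pm1$ on a Schubert line and hence surjective. This forces $\pi_1(\Sigma^\circ_i)=\pi_1(\Sigma^{\wedge\circ}_i)=1$ on the indicated range and $\pi_1(\Sigma^{S\circ}_i)=\ZZ/2\ZZ$; the local systems then follow, since irreducible local systems correspond to irreducible $\pi_1$--representations, giving only the constant sheaf in the ordinary and skew cases and exactly one further rank-one $\cL_j$ with $\cL_j^{\otimes2}\cong\CC$ in the symmetric case.

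For the characteristic cycles the main device is the index formula of Theorem~\ref{theo; microlocal}: because the Euler-obstruction matrices of all three families (Theorems~\ref{theo; OrdEuler}, \ref{theo; SkewEuler}, \ref{theo; SymEuler}) are unipotent triangular, hence invertible, the microlocal multiplicities $r_i(\IC)$ are determined by the stalk Euler characteristics $\chi_l(\IC)$, and conversely. For the ordinary family the Tjurina resolution $\hat\tau_{n,k}\to\tau_{n,k}$ is small---the codimension of $\tau_{n,s}$ in $\tau_{n,k}$ is $s^2-k^2>2k(s-k)$ for $s>k$---so $\IC_{\Sigma_{n,j}}(\CC)=Rp_*\CC[\dim]$ and the stalk at a point of $\tau^\circ_{n,l}$ is the cohomology of the fibre $G(j,l)$, whose Euler characteristic is $\pm\binom{l}{j}$. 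Feeding this, together with $e(l,i)=\binom{l}{i}$, into the index formula yields a triangular linear system for the $r_i$ whose unique solution is $r_i=\pm\delta_{ij}$; that is, $CC(\IC_{\Sigma_{n,j}}(\CC))=[T^*_{\Sigma_{n,j}}\CC^{n^2}]$, an irreducible conormal. The skew-symmetric family in the stated range is handled identically with the corresponding small resolution.

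The hard part is the symmetric family. Here the Tjurina resolution is \emph{not} small: already $\tau^S_{n,k+1}$ has codimension $k+1$ in $\tau^S_{n,k}$ against a fibre $\PP^{k}$, so smallness fails for every $k\ge1$ and semismallness for $k\ge2$, and the $\IC$ stalks are therefore not visible from elementary fibre geometry, so the argument above cannot be run directly. At this point one must import the structural input of \cite{Braden}: either the explicit presentation of the category of $GL_n$-equivariant perverse sheaves on $Sym^2 k^n$, or the identification of symmetric rank-locus singularities with Schubert-variety singularities in the isotropic (orthogonal) Grassmannian, where the odd-orthogonal/type-$D$ phenomenon makes the decomposition-theorem pushforward of the constant sheaf acquire exactly one extra $\IC$ summand supported on the next stratum. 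Granting that, dimension and equivariance force $CC(\IC_{\Sigma^S_{n,j}}(\CC))$ and $CC(\IC_{\Sigma^S_{n,j}}(\cL_j))$ to have the shape $[T^*_{\Sigma^S_{n,j}}]+c\,[T^*_{\Sigma_{n,j+1}}]$ with $c\in\{0,1\}$, and the single coefficient $c$ is pinned down either by one stalk computation or, equivalently, by demanding that the index formula be consistent with the Euler obstructions $e_{k,m}$ of Theorem~\ref{theo; SymEuler}; this produces the stated parity-dependent four-case answer, the case $j=n$ being trivial since $\Sigma^S_{n,n}=\{0\}$.

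I expect the principal obstacle to be precisely this symmetric case: in the absence of a small (or even semismall) resolution one genuinely needs the equivariant perverse-sheaf category---equivalently the orthogonal-Grassmannian Schubert dictionary---to detect the extra conormal and to fix its coefficient, whereas the fundamental-group computation and the ordinary and skew characteristic-cycle computations are routine once the orbit--bundle descriptions and the index formula are in hand.
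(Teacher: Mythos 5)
First, note that the paper does not actually prove this proposition: it is quoted verbatim from \cite[Corollary 4.10, 4.11]{Braden}, so you are attempting strictly more than the text does. Your $\pi_1$ computation via the orbit fibrations and the homotopy exact sequence is a reasonable reconstruction, and your treatment of the ordinary case is sound: the Tjurina transform is indeed small (your inequality $2k(s-k)<s^2-k^2$ for $s>k$ is correct), so $\IC=Rp_*\CC[\dim]$, the stalk Euler characteristics are $\pm\binom{l}{j}$, and the invertibility of the Euler-obstruction matrix in the index formula forces $r_i=\delta_{ij}$.

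The genuine gap is your claim that the skew-symmetric family "is handled identically with the corresponding small resolution." The skew Tjurina transform is \emph{not} small: over the stratum of corank $2r$ the fiber is $G(2k,2r)$, of dimension $2k(2r-2k)$, while the codimension of $\Sigma^\wedge_{n,2r}$ in $\Sigma^\wedge_{n,2k}$ is $\tfrac{(2r-2k)(2r+2k-1)}{2}$; smallness would require $4k<2r+2k-1$, which already fails at the first singular stratum $r=k+1$ for every $k\ge 1$. The paper says exactly this in the remark following Theorem~\ref{theo; stalkRankloci}: the fiber Euler characteristics $\binom{2r}{2k}$ are strictly larger than the $\IC$ stalk values $\binom{r}{k}$, so the pushforward of the constant sheaf from the Tjurina transform is not the $\IC$ complex, and your index-formula argument cannot be run with those stalk values. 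The irreducibility of $CC(\IC_{\Sigma^\wedge_{n,j}})$ therefore requires the same external input (the equivariant perverse-sheaf analysis of \cite{Braden}, or a purity/parity argument) that you invoke only for the symmetric case; as written, the skew case is unproved. A secondary, smaller issue: in the symmetric case the assertion that "dimension and equivariance force" the characteristic cycle to involve only the adjacent conormal $[T^*_{\Sigma_{n,j+1}}]$ is not justified a priori (deeper conormals could contribute); this too is really part of what you must import from \cite{Braden} rather than something forced by general principles.
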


Thus combine the index theorem with above proposition, with the knowledge in \S\ref{S; EulerRankLoci}  we obtain the following:
\begin{theo}[Stalk Euler Characteristic of $\IC$ complex]
\label{theo; stalkRankloci}
Let $\IC_{\Sigma^*_{n,k}(\CC)}$ $($respectively, $\IC_{\Sigma^S_{n,k}}(\cL_k))$  be the intersection cohomology sheaf complex of $\Sigma^*_{n,k}$ $($respectively,  $\Sigma^S_{n,k} )$. 
We use $\chi_{\Sigma^{* \circ}_{n,r}}$ to denote $\chi_p$ for any $p\in \Sigma^{* \circ}_{n,r}$.
For ordinary matrix rank loci $( *=\emptyset )$, we have
\[
(-1)^{\dim \Sigma_{n,i}}\chi_{\Sigma^\circ_{n,r}}(\IC_{\Sigma_{n,k}}(\CC))=\binom{r}{k} \/.
\]
For skew-symmetric matrix rank loci $(*=\wedge)$, we have:
\begin{align*}
(-1)^{\dim \Sigma^{\wedge }_{2n,2k}}\chi_{\Sigma^{\wedge \circ}_{2n,2r}}(\IC_{\Sigma^\wedge_{2n,2k}})=(-1)^{\dim \Sigma^{\wedge  }_{2n+1,2k+1}} \chi_{\Sigma^{\wedge \circ}_{2n+1,2r+1}}(\IC_{\Sigma^\wedge_{2n+1,2k+1}})=\binom{r}{k} 
\end{align*}
For symmetric matrix rank loci $(*=S)$, we have:
\begin{align*}
(-1)^{\dim \Sigma^{S }_{n,i}} \chi_{\Sigma^{S \circ}_{n,j}}(\IC_{\Sigma^S_{n,i}}(\cL_i))
=& 
\begin{cases}
 \binom{k}{s} & (i,j)=(2s,2k) \\
 0  & (i,j)=(2s+1,2k)   \\
 0  & (i,j)=(2s,2k+1) \\
 \binom{k}{s} & (i,j)=(2s+1,2k+1)  
\end{cases}
\\
(-1)^{\dim \Sigma^{S }_{n,i}} \chi_{\Sigma^{S \circ}_{n,j}}(\IC_{\Sigma^S_{n,i}}(\CC))
=& 
\begin{cases}
 \binom{k}{s} & (i,j)=(2s,2k) \\
 \binom{k}{s+1} & (i,j)=(2s+1,2k) \\
 \binom{k+1}{s+1} & (i,j)=(2s+1,2k) \\
 \binom{k}{s} & (i,j)=(2s+1,2k+1)  
\end{cases}   
\end{align*}
\end{theo}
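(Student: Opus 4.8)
The plan is to obtain each stalk Euler characteristic directly from the Microlocal Index Formula (Theorem~\ref{theo; microlocal}), whose only two ingredients are the characteristic cycle of the $\IC$ sheaf in question and the local Euler obstructions of the rank stratification. The first ingredient is supplied verbatim by the Proposition just stated (reproducing \cite[Corollary 4.10, 4.11]{Braden}), and the second by Theorems~\ref{theo; OrdEuler}, \ref{theo; SkewEuler} and~\ref{theo; SymEuler}. Before applying the formula I would record two reductions used throughout: by the remark preceding Theorem~\ref{theo; microlocal}, the statement for the affine cones $\Sigma^*_{n,k}$ is equivalent to the one for their projectivizations, so one may argue conically; and by the product structure of Proposition~\ref{prop; rankloci}$(4)$ together with the product property of the local Euler obstruction, $Eu_{\Sigma^*_{n,i}}(\Sigma^{*\circ}_{n,j})$ depends only on $(i,j)$ and equals the number $e_{i,j}$ tabulated in those theorems (so $\binom{j}{i}$ in the ordinary and skew-symmetric cases).

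For the ordinary and skew-symmetric cases I would note that $CC(\IC_{\Sigma_{n,k}}(\CC))$ (resp.\ $CC(\IC_{\Sigma^\wedge_{n,k}}(\CC))$) is irreducible, so the only nonzero microlocal multiplicity is $r_k=1$ and the Index Formula degenerates to the single term $\chi_{\Sigma^\circ_{n,r}}(\IC_{\Sigma_{n,k}}(\CC)) = (-1)^{\dim \Sigma_{n,k}}\,Eu_{\Sigma_{n,k}}(\Sigma^\circ_{n,r})$; invoking Theorem~\ref{theo; OrdEuler} (resp.\ Theorem~\ref{theo; SkewEuler}) and moving the sign across gives the first two displayed identities.

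For the symmetric case the characteristic cycle has one or two components. When it is irreducible — for $\IC_{\Sigma^S_{n,2s}}(\CC)$ and $\IC_{\Sigma^S_{n,2s+1}}(\cL_{2s+1})$ — the same one-term computation gives $(-1)^{\dim \Sigma^S_{n,i}}\chi_{\Sigma^{S\circ}_{n,j}}=e_{i,j}$, and Theorem~\ref{theo; SymEuler} supplies the stated values. When it has two components — for $\IC_{\Sigma^S_{n,2s}}(\cL_{2s})$, whose cycle also carries $[T^*_{\Sigma^S_{n,2s+1}}]$, and for $\IC_{\Sigma^S_{n,2s+1}}(\CC)$, whose cycle also carries $[T^*_{\Sigma^S_{n,2s+2}}]$ — the Index Formula gives
\[
(-1)^{\dim \Sigma^S_{n,i}}\chi_{\Sigma^{S\circ}_{n,j}} = e_{i,j}+(-1)^{\dim \Sigma^S_{n,i+1}-\dim \Sigma^S_{n,i}}\,e_{i+1,j}\/,
\]
so the only arithmetic left is the parity of $\dim \Sigma^S_{n,i}-\dim \Sigma^S_{n,i+1}$. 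From the dimension formula in Proposition~\ref{prop; rankloci}$(2)$ this difference equals $i+1$, so the cross term enters with sign $-1$ when $i$ is even and $+1$ when $i$ is odd; substituting the $e_{i,j}$ from Theorem~\ref{theo; SymEuler} and collapsing with the Pascal identity $\binom{k}{s}+\binom{k}{s+1}=\binom{k+1}{s+1}$ then yields the four-case formulas, with $\binom{a}{b}=0$ for $b<0$.

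The hard part is not any single calculation but keeping the signs straight in the symmetric case: I would need to identify correctly which neighbouring stratum carries the extra conormal component — always the adjacent larger-rank stratum $\Sigma^{S\circ}_{n,i+1}$, with the pattern of which $\IC$'s this happens for alternating in the parity of $i$ — and then pair that against the parity of $\dim \Sigma^S_{n,i}-\dim \Sigma^S_{n,i+1}=i+1$, bearing in mind that $\dim$ here is the dimension of the affine cone (the projective dimensions differ by $1$, which is irrelevant to all the relative signs appearing). As a safeguard I would verify the signs on the base cases $\Sigma^S_{2,1}$ and $\Sigma^S_{3,1}$ before asserting the general formula.
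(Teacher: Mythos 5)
Your proposal is correct and follows essentially the same route as the paper: apply the Microlocal Index Formula with the characteristic-cycle multiplicities from \cite[Corollary 4.10, 4.11]{Braden}, feed in the local Euler obstructions of Theorems~\ref{theo; OrdEuler}, \ref{theo; SkewEuler}, \ref{theo; SymEuler}, and resolve the symmetric case by the parity of $\dim\Sigma^S_{n,i}-\dim\Sigma^S_{n,i+1}=i+1$ together with the Pascal identity. (Only a terminological quibble: the extra conormal component $[T^*_{\Sigma^S_{n,i+1}}]$ lies over the adjacent \emph{higher-corank}, i.e.\ lower-rank, stratum, but your index $i+1$ and the resulting signs are exactly right.)
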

\begin{proof}
Since the proof for two formulas are almost identical, we only prove the one for $\chi_{\Sigma^{S \circ}_{n,j}}(\IC_{\Sigma^S_{n,i}}(\cL_i))$.  Let $d_{n,j}=\dim \Sigma^{S \circ}_{n,j}$, we have 
\begin{align*}
\chi_{\Sigma^{S \circ}_{n,r}}(\IC_{\Sigma^S_{n,k}}(\cL_i))
=& \sum_{j} (-1)^{d_{n,j}} Eu_{\Sigma^{S }_{n,j}}(\Sigma^{S \circ}_{n,r}) \cdot r_{j}(\IC_{\Sigma^S_{n,k}}(\cL_i)) \\
=& 
\begin{cases}
(-1)^{d_{n,2A}} Eu_{\Sigma^{S }_{n,2A}}(\Sigma^{S \circ}_{n,r}) +(-1)^{d_{n,2A+1}} Eu_{\Sigma^{S }_{n,2A+1}}(\Sigma^{S \circ}_{n,r}) & k=2A ;\\
(-1)^{d_{n,2A+1}} Eu_{\Sigma^{S }_{n,2A+1}}(\Sigma^{S \circ}_{n,r}) & k=2A+1 \\
\end{cases} 
\end{align*}
Recall that $d_{n,k}=k(n-k)+\binom{n-k+1}{2}$. Thus we have
\[
(-1)^{d_{n,2A+1}-d_{n,2A}}=(-1)^{k-1} \/.
\]
This proves that 
\begin{align*}
(-1)^{d_{n,k}} \chi_{\Sigma^{S \circ}_{n,r}}(\IC_{\Sigma^S_{n,k}}(\cL_i))
=& 
\begin{cases}
 Eu_{\Sigma^{S }_{n,2A}}(\Sigma^{S \circ}_{n,r}) -Eu_{\Sigma^{S }_{n,2A+1}}(\Sigma^{S \circ}_{n,r})
 & k=2A ;\\
Eu_{\Sigma^{S }_{n,2A+1}}(\Sigma^{S \circ}_{n,r}) & k=2A+1 \\
\end{cases}
\end{align*}
The formula then follows directly from Theorem~\ref{theo; SymEuler}. 
\end{proof}

\begin{rema}
As shown in \cite[\S 6.2]{G-M2}, when a complex variety $X$ admits a small resolution, the $\IC$ complex is the $($shifted$)$ derived pushforward of  the constant sheaf from the small resolution.
Thus for any point $x\in X$, the signed stalk Euler characteristic of $\IC_X$ equals the Euler characteristic of the fiber:
$$
(-1)^{\dim X_i} \chi_x (\IC_X)=\chi (p^{-1}(x)).
$$
For ordinary matrix rank loci $\tau_{n,k}$, the Tjurina transform $\hat{\tau}_{n,k}$ is exactly a small resolution. The fiber at $\tau_{n,r}$ is isomorphic to $G(k,r)$, which has Euler characteristic $\binom{r}{k}$. For symmetric and skew-symmetric rank loci, the Tjurina transforms fail to be small: the fiber Euler characteristics are too large. 
\end{rema}
%We believe that the `strange' behavior of symmetric rank loci should correspond to the shape of the small resolutions of $\tau^S_{n,k}$, providing that they exist. 

In fact, assuming that there are canonically define small resolutions for symmetric rank loci, then the value
$\chi_{\Sigma^{S \circ}_{n,2k+1	}}(\IC_{\Sigma^S_{n,2s+1}}(\CC))$ forces the fiber Euler characteristics to jump. This supports the following conjecture.
\begin{conj}
There do not exist canonical small resolutions for symmetric rank loci.
\end{conj}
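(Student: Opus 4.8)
The proposal is to argue by contradiction, starting from a \emph{canonical} --- hence $GL_n$-equivariant --- small resolution and confronting the smallness constraint with the stalk Euler characteristics of Theorem~\ref{theo; stalkRankloci}. Suppose $p\colon Y\to \Sigma^S_{n,k}$ is such a resolution. By \cite[\S 6.2]{G-M2} we then have $Rp_*\CC_Y[\dim \Sigma^S_{n,k}]\cong \IC_{\Sigma^S_{n,k}}(\CC)$, so that for every $x\in \Sigma^{S\circ}_{n,r}$ with $r\ge k$,
\[
\chi\bigl(p^{-1}(x)\bigr)=(-1)^{\dim \Sigma^S_{n,k}}\,\chi_x\bigl(\IC_{\Sigma^S_{n,k}}(\CC)\bigr),
\]
while the Goresky--MacPherson smallness criterion forces $2\dim p^{-1}(x)<\codim_{\Sigma^S_{n,k}}\Sigma^{S\circ}_{n,r}$ whenever $r>k$. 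By equivariance and the product structure of Proposition~\ref{prop; rankloci}(iv) the fibre over a point of $\Sigma^{S\circ}_{n,r}$ is, up to a smooth factor, the fibre over the cone point of the corresponding equivariant resolution $p_r$ of $\Sigma^S_{r,k}$; since that codimension equals $\dim \Sigma^S_{r,k}$, the whole question reduces to the single inequality $2\dim p_r^{-1}(0)<\dim\Sigma^S_{r,k}$ together with the equality $\chi\bigl(p_r^{-1}(0)\bigr)=(-1)^{\dim\Sigma^S_{r,k}}\chi_0\bigl(\IC_{\Sigma^S_{r,k}}(\CC)\bigr)$, whose right-hand side is the binomial value of Theorem~\ref{theo; stalkRankloci}.

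First I would make the candidate resolutions explicit. Every $GL_n$-equivariant resolution of $\Sigma^S_{n,k}$ dominates a minimal equivariant modification supported over the singular locus, and the minimal ones are built from the Tjurina-type maps $\hat{\tau}^S_{n,j}=\PP(Sym^2 Q^\vee)\to\tau^S_{n,j}$ over $G(j,n)$ and their iterated variants obtained by further blowing up along the isotropic-Grassmannian orbits indicated in \S\ref{S; rankloci} (for the symmetric case these are the partial flags of subspaces totally isotropic for the associated bilinear form). For each such map the fibre over the cone point of $\Sigma^S_{r,k}$ is an explicit tower of projectivised symmetric-square bundles over isotropic flag varieties, and one computes its dimension and Euler characteristic directly from the diagram of \S\ref{S; rankloci}. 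The claim to be verified is that in every case the fibre dimension violates $2\dim p_r^{-1}(0)<\dim\Sigma^S_{r,k}$, or else the fibre Euler characteristic disagrees with the value dictated by Theorem~\ref{theo; stalkRankloci}; the instructive discrepancy, visible already in the smallest examples, is that $\IC_{\Sigma^S_{n,2k+1}}(\CC)$ has \emph{non-zero} stalk Euler characteristic along the even-corank strata, where the local Euler obstruction of $\Sigma^S_{n,2k+1}$ itself vanishes by Theorem~\ref{theo; SymEuler} --- this is exactly what makes the fibre Euler characteristics ``jump'' in the language of the remark preceding the conjecture, and no genuine equivariant resolution should be able to reproduce it.

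I expect the main obstacle, and the reason the statement is only a conjecture, to be twofold. First, ``canonical'' carries no intrinsic meaning: one must either fix a precise notion --- $GL_n$-equivariance, or functoriality in the base --- or else prove the stronger assertion that \emph{no} small resolution exists at all, which would require an obstruction intrinsic to $\IC_{\Sigma^S_{n,k}}(\CC)$ rather than to a chosen resolution. The natural candidate is the reducibility of the characteristic cycle: by the proposition of \S\ref{S; stalkRankloci}, $CC(\IC_{\Sigma^S_{n,2k+1}}(\CC))=[T_{\Sigma^S_{n,2k+1}}^*\CC^{\binom{n+1}{2}}]+[T_{\Sigma_{n,2k+2}}^*\PP^{N}]$ is genuinely reducible, in sharp contrast with the ordinary and skew-symmetric cases --- but reducibility of the characteristic cycle of an $\IC$ sheaf is by itself compatible with the existence of a small resolution (Schubert varieties with non-trivial Kazhdan--Lusztig data already show this), so a finer invariant is needed to convert the numerical evidence of Theorem~\ref{theo; stalkRankloci} into a proof. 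Second, even granting the equivariant formulation, controlling the cohomology of the fibres of \emph{all} equivariant resolutions --- iterated towers of $\PP(Sym^2 Q^\vee)$-bundles over isotropic flag varieties --- uniformly in $n$, and matching them against the binomials of Theorem~\ref{theo; stalkRankloci}, is a substantial combinatorial computation that has not been carried out. A clean treatment of either of these two difficulties would settle the conjecture.
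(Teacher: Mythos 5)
The statement you are asked to prove is stated in the paper as a \emph{conjecture}, and the paper offers no proof of it: the only support given is the remark immediately preceding it, namely that the stalk Euler characteristics of $\IC_{\Sigma^S_{n,k}}(\CC)$ computed in Theorem~\ref{theo; stalkRankloci}, combined with the Goresky--MacPherson identity $(-1)^{\dim X}\chi_x(\IC_X)=\chi(p^{-1}(x))$ for small resolutions, would force the fibre Euler characteristics of any putative small resolution to behave in a way the author finds implausible. Your proposal elaborates exactly this heuristic, and your own closing assessment is accurate: what you have written is a strategy, not a proof, and the statement remains open.

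The concrete gaps are the ones you name, plus a few you pass over quickly. First, ``canonical'' is undefined; without fixing a definition (e.g.\ $GL_n$-equivariance) the statement cannot even be attacked, and with such a definition one still needs a classification of all equivariant small-candidate resolutions --- your assertion that every equivariant resolution dominates a ``minimal equivariant modification'' built from Tjurina-type towers over isotropic flag varieties is itself unproved and is the crux of the matter. Second, the reduction to the cone point via the normal-slice structure of Proposition~\ref{prop; rankloci} requires knowing that an equivariant resolution restricts to a resolution of the slice and that this restriction is again of the ``canonical'' type; neither is automatic. Third, and most importantly, the numerical data of Theorem~\ref{theo; stalkRankloci} do not by themselves produce a contradiction: the stalk Euler characteristics there are non-negative binomials, so a small resolution would merely be required to have fibres with those Euler characteristics, which is not a priori impossible. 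An actual proof needs a genuine obstruction --- you correctly observe that reducibility of $CC(\IC_{\Sigma^S_{n,2k+1}}(\CC))$ is not one, since small resolutions coexist with reducible characteristic cycles for Schubert varieties --- and no such obstruction is identified either in your proposal or in the paper. In short, your approach is the same as the paper's motivating heuristic, and like the paper it stops short of a proof.
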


\section{An Enumerative Problem}
\label{S; enumaration}
In this section we finish the proof of Proposition~\ref{prop; smSym}. First
we consider the following situation. Let $V$ be a complex vector space of dimension $n+1$, and let $X\subset \PP(V)$ be an irreducible quadratic hypersurface defined by the vanishing of $f_X$.
Let  $\mathbb{G}(d,n)=G(d+1,n+1)$ be the Grassmannian of projective $d$ planes, with $S$ and $Q$ being universal sub and quotient bundles.
 We consider the space of projective $d$ planes contained in $X$:
\[
Z^X_{d,n}:=\{\Lambda| \Lambda\subset X\}\subset \mathbb{G}(d,n)
\] 
This space is called the Fano scheme of $X$.
Notice that quadratic hypersurfaces in $\PP(V)$ correspond to sections of the sheaf $Sym^2(V^\vee)$, thus the function defining $X$ induces a section $s_X$ from $\mathbb{G}(d,n)$ to $Hom(Sym^2(S), \CC)$, sending $\Lambda$ to 
\[
{s_X}|_{\Lambda}\colon Sym^2 \Lambda \subset  Sym^2 V\to \CC \/.
\]
The last map is defined by $f_X$. One can check that the zero locus of $s_X$ is exactly supported on $Z^X_{d,n}$. 
When $2d\leq n-1$, if we choose a generic smooth section $f_X$ in $Sym^2(V^\vee)$,  the space $Z^X_{d,n}$ is smooth of codimension $\binom{d+2}{2}$. We denote this space  by $Z_{d,n}$, since for generic sections $f_X$ its topological structure is preserved. For details we refer to \cite{Altman-Kleiman}. 

In classical enumerative geometry we concern the degree of this variety, but here we consider its (topological) Euler characteristic instead. When $Z_{d,n}$ is of dimension $0$, they coincide. We denote $F(d,n)=\chi(Z_{d,n})$ to be the Euler characteristic. 
\begin{theo}
When $2d\leq n-1$, 
the functions $F(d,n)$ satisfy the following recursive formula. 
\begin{align*}
F(d,2m)=& F(d,2m-1)\/, \\
F(d,2m+1)=& F(d,2m)+2\cdot F(d-1,2m) \/.
\end{align*}
Thus computing the initial value we have
\[
F(d,2m-1)=F(d,2m)=2^{d+1}\cdot\binom{m}{m-d-1} \/.
\]
\end{theo}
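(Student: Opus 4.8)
The plan is to work geometrically. Recall that $Z_{d,n}$ denotes the Fano scheme $Z^X_{d,n}$ for a generic, hence smooth, quadric $X\subset\PP^n=\PP(V)$; fix such an $X$ and a generic hyperplane $H\cong\PP^{n-1}\subset\PP^n$. Then $X':=X\cap H$ is again a smooth quadric in $\PP^{n-1}$, so (since all smooth quadrics of a given dimension over $\CC$ are projectively equivalent) its Fano schemes of $d$-planes and of $(d-1)$-planes are isomorphic to $Z_{d,n-1}$ and $Z_{d-1,n-1}$ respectively. A $d$-plane $\Lambda\subset X$ either lies in $H$ or not, giving a disjoint decomposition $Z_{d,n}=A\sqcup B$ with $A=\{\Lambda\subset H\}\cong Z_{d,n-1}$ and $B=\{\Lambda\not\subset H\}$. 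Hence $F(d,n)=F(d,n-1)+\chi(B)$, and everything reduces to computing $\chi(B)$.

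To compute $\chi(B)$ I would use the morphism $\rho\colon B\to Z_{d-1,n-1}$, $\Lambda\mapsto\Lambda\cap H$, and identify its fibres. Fix $\Pi\in Z_{d-1,n-1}$ and let $\wt{\Pi}\subset V$ be its linear span. Since $\Pi\subset X$, the subspace $\wt{\Pi}$ is isotropic of dimension $d$ for the polar bilinear form of $X$, so the quadratic form of $X$ descends to a nondegenerate form on $W:=\wt{\Pi}^{\perp}/\wt{\Pi}$, with associated smooth quadric $Q'\subset\PP(W)$ of dimension $n-2d-1$. Sending $\Lambda$ to $\wt{\Lambda}/\wt{\Pi}\subset W$, which is an isotropic line because $\wt{\Lambda}$ is isotropic and contains $\wt{\Pi}$ (hence lies in $\wt{\Pi}^{\perp}$), identifies $\rho^{-1}(\Pi)$ with $Q'\setminus\overline H$, where $\overline H\subset\PP(W)$ is the image of $H\cap\PP(\wt{\Pi}^{\perp})$. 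Thus $\chi(\rho^{-1}(\Pi))=\chi(Q')-\chi(Q'\cap\overline H)$.

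The crux — and the step I expect to be the main obstacle — is showing this number is the same for \emph{every} $\Pi$. The point is to rule out that $\overline H$ is a tangent hyperplane of $Q'$, for then $Q'\cap\overline H$ would be a quadric cone, $\chi(Q'\cap\overline H)$ would change, and the fibres of $\rho$ would jump. But tangency of $\overline H$ to $Q'$ at a point, after lifting that point to the corresponding $d$-plane $\Lambda\subset X$, translates into the condition that the pole $p_H$ of $H$ with respect to $X$ lies on $\Lambda$; since $\Lambda\subset X$ and $H$ is generic we have $p_H\notin X$, a contradiction. (The same computation shows $\overline H$ is always a genuine proper hyperplane of $\PP(W)$.) Hence $\overline H$ is never tangent to $Q'$, so $Q'\cap\overline H$ is a smooth quadric of dimension $n-2d-2$ and, using $\chi(Q_s)=s+2$ for $s$ even and $\chi(Q_s)=s+1$ for $s$ odd, $\chi(\rho^{-1}(\Pi))=\chi(Q_{n-2d-1})-\chi(Q_{n-2d-2})$ equals $\epsilon_n:=2$ if $n$ is odd and $0$ if $n$ is even, independent of $\Pi$. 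Since all fibres of $\rho$ have the same Euler characteristic, additivity of the Euler characteristic over the target gives $\chi(B)=\epsilon_n\cdot\chi(Z_{d-1,n-1})=\epsilon_n\,F(d-1,n-1)$, so $F(d,n)=F(d,n-1)+\epsilon_n\,F(d-1,n-1)$. Specializing to $n=2m$ and to $n=2m+1$ yields exactly the two displayed recursions (in the second, $F(d-1,n-1)=F(d-1,2m)$).

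Finally, to get the closed form I would solve the recursion by induction on $n$, using the initial data $F(-1,n)=1$ (the empty plane lies on every quadric) and $F(d,n)=0$ for $n\le 2d$ (a nondegenerate quadratic space of dimension $n+1<2(d+1)$ has no isotropic $(d+1)$-plane, so a smooth $X$ contains no $d$-plane). For $n=2m$ the first recursion reduces to $n=2m-1$; for $n=2m+1$ the second recursion and the inductive hypothesis give $F(d,2m+1)=2^{d+1}\binom{m}{m-d-1}+2\cdot 2^{d}\binom{m}{m-d}=2^{d+1}\binom{m+1}{m-d}$ by Pascal's identity, which is the asserted value $2^{d+1}\binom{m+1}{(m+1)-d-1}$; the case $d=0$ uses $F(-1,\cdot)=1$, and the formula remains valid at $d=-1$. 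This completes the proof.
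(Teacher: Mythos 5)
Your proof is correct and follows essentially the same route as the paper: the decomposition of $Z_{d,n}$ into the planes contained in a generic hyperplane $H$ (isomorphic to $Z_{d,n-1}$) and the complement fibered over $Z_{d-1,n-1}$ via $\Lambda\mapsto\Lambda\cap H$, with the fiber identified as the complement of a hyperplane section in a smooth quadric of dimension $n-2d-1$. Your treatment is in fact slightly more careful than the paper's at the one delicate point — you justify via the pole of $H$ that $\overline H$ is never tangent to $Q'$, so every fiber has the same Euler characteristic, where the paper only appeals to genericity — but this is a refinement of the same argument, not a different one.
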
 
\begin{proof}
First we consider the initial value $d=0$. When $d=0$ we can see that $Z_{0,n}=X$ and $F(0,n)=\chi(X)$. Notice that $X$ is a smooth quadratic hypersurface in $\PP^n$, thus we have
\[
c_{sm}^X(H)=\frac{2H\cdot (1+H)^{n+1}}{(1+2H)}=\sum_{i=0}^{n-1}(-1)\binom{n+1}{i}(-2)^{n-i} H^n+\text{ lower degree terms} \/.
\]
In particular we have
\[
\chi(X)=\sum_{i=0}^{n-1}(-1)\binom{n+1}{i}(-2)^{n-i}=\frac{(-1)^{n+1}-1-2\binom{n+1}{n}}{2}=n+1+\frac{(-1)^{n+1}-1}{2} \/.
\]
This shows that $F(0,2m)=F(0,2m-1)=2m$.

Now we prove the induction process. Choose a generic hypersurface $X$ such that $Z_{d,n}=Z^X_{d,n}$.
Notice that the intersection of $X$ with a generic hyperplane $H$ is again a generic quadratic hypersurface $Y=X\cap H$ in $H\cong \PP^{n-1}$. We define the following two strata in $Z^X_{d,n}$: 
\[
A_H:=\{\Lambda|\Lambda\subset H\} \subset Z^X_{d,n};\quad B_H:=Z^X_{d,n}\setminus A_H \/.
\]
There are two canonical restriction maps
\[
p\colon A_H\to Z^{Y}_{d,n-1} ;\quad q\colon B_H\to Z^Y_{d-1,n-1} 
\]
defined as follows. 
For $p$, we simply send $\Lambda\subset X$ to $\Lambda\subset Y=X\cap H$, since $\Lambda\subset H$. By the definition of $A_H$ one can observe that $p$ is an isomorphism, thus we have $\chi(A_H)=\chi(Z^Y_{d,n-1})=F(d,n-1)$.

We define $q$ by sending a $d$-plane $\Lambda$ to the $d-1$-plane $\Lambda\cap H$. This is well-defined since $\Lambda\in B_H$ iff $\Lambda\not\subset H$. The map $q$ is dominant for generic $X$ and $H$, and for any $\Gamma\in Z^Y_{d-1,n-1}$ the fiber $q^{-1}(\Gamma)$ is
\[
\{\Lambda\in G(d+1,n+1)|\Gamma\subset \PP(\Lambda)\subset X\}\setminus \{\Lambda\in G(d+1,n)| \Gamma\subset\PP(\Lambda)\subset X\cap H=Y\} \/.
\]
We denote the two spaces by $C_X$ and $C_Y$ respectively.
Notice that $\{\Lambda\in G(d+1,n+1)|\Gamma\subset \PP(\Lambda)\}$ can be identified with  $G(1, n-d)$. 
For any $L\in G(1, n-d)$, to force a the (projectivization of) the span of $L$ with $\Gamma$ to live in $X$, we need $d-1$ linear equations and a degree $2$ equation. Thus we can identify $C_X$ with a 
generic quadratic hypersurface in $\PP^{n-2d}$, and similarly $C_Y$ is identified with a generic quadratic hypersurface in $\PP^{n-2d-1}$. This shows that the fibers $q^{-1}(\Gamma)$ are all isomorphic, and we then have 
\[
\chi(B_H)=\chi(q^{-1}(\Gamma))\cdot F(d-1,n-1) \/.
\] 
Moreover, the generic assumption  shows that $C_X$ and $C_Y$ are both smooth. Thus similar argument of the initial case shows
\begin{align*}
\chi(q^{-1}(\Gamma))=&\chi(C_X)-\chi (C_Y) \\
=&\left( n-2d+1+\frac{(-1)^{n-2d+1}-1}{2}\right) - \left( n-2d+\frac{(-1)^{n-2d}-1}{2}  \right) \\
=& 1+ (-1)^{n-1}
\end{align*}
The  proof is then closed by 
\[
F(d,n)=\chi(Z^X_{d,n})=\chi(A_H)+\chi(B_H)=F(d,n-1)+(1+(-1)^{n-d})\cdot F(d-1,n-1) \/.
\]
\end{proof}

When $d=2r$ and $n=3r+1$, the space $Z_{k,m}$ is of dimension $0$. Thus we have a direct Corollary
\begin{coro}
For a generic quadratic hypersurface $X$ in $\PP^{3r+1}$, there are exactly $2^{d+1}\binom{A}{A-d-1}$ projective $2r$-planes living in $X$. Here $A:=2\cdot \lfloor \frac{n+1}{2} \rfloor$.
\end{coro}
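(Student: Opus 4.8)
The plan is to read the count off from the Theorem, after checking that the Fano scheme in question is $0$-dimensional, so that its topological Euler characteristic is literally the number of $2r$-planes.

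\emph{First, the dimension count.} I would note that $\mathbb{G}(d,n)=G(d+1,n+1)$ has dimension $(d+1)(n-d)$, and that (by the discussion preceding the Theorem) for a generic, hence smooth, quadric $X\subset\PP(V)$ the Fano scheme $Z_{d,n}=Z^X_{d,n}$ is the zero scheme of a section of the globally generated bundle $(Sym^2 S)^\vee$ of rank $\binom{d+2}{2}$, hence is smooth (possibly empty) of pure dimension $(d+1)(n-d)-\binom{d+2}{2}$. Substituting $d=2r$, $n=3r+1$ and using $\binom{2r+2}{2}=(r+1)(2r+1)$,
\[
\dim Z_{d,n}=(2r+1)(r+1)-(r+1)(2r+1)=0 ,
\]
which is the asserted vanishing.

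\emph{Then, translate and apply.} Being smooth and $0$-dimensional, $Z_{d,n}$ is a finite disjoint union of reduced points, so the number of projective $2r$-planes contained in $X$ equals $\chi(Z_{d,n})$, which is by definition $F(d,n)$. Writing $n=3r+1$ as $2m-1$ or $2m$, i.e. $m=\lfloor (n+1)/2\rfloor$, the closed formula of the Theorem yields
\[
F(2r,3r+1)=2^{d+1}\binom{m}{m-d-1},
\]
which is the number asserted in the statement (with $A=\lfloor (n+1)/2\rfloor$).

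\emph{Where care is needed.} There is no serious obstacle; the Theorem supplies the arithmetic. The one point to justify is the identification ``number of planes $=\chi(Z_{d,n})=F(d,n)$'', which is valid exactly because the generic $Z_{d,n}$ is smooth, hence reduced. I would also remark that for $r\geq 1$ the hypothesis $2d\leq n-1$ of the Theorem is not met ($2d=4r>3r=n-1$), so that a smooth quadric in $\PP^{3r+1}$ carries no $2r$-plane and $F(2r,3r+1)=0$; but then $m-d-1<0$, so the right-hand side also vanishes by the stated convention $\binom{a}{b}=0$ for $b<0$, and the identity persists. The case $r=0$ is simply the base case: a quadric in $\PP^{1}$ is two points, so $F(0,1)=2=2^{0+1}\binom{1}{0}$.
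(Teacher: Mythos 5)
Your derivation is correct and is exactly the route the paper intends: the Corollary is simply read off from the Theorem once one checks $(d+1)(n-d)=\binom{d+2}{2}$ at $d=2r$, $n=3r+1$, and identifies the number of planes with $\chi(Z_{d,n})=F(d,n)$ via smoothness/reducedness. Your closing remarks are in fact sharper than the statement itself: since $2d\le n-1$ forces $r=0$, the Fano scheme of a smooth quadric in $\PP^{3r+1}$ is empty for $r\ge 1$ and the count is $0$, consistent with $\binom{m}{m-d-1}=0$; and the normalization must indeed be $A=\lfloor(n+1)/2\rfloor=m$ rather than the printed $2\lfloor(n+1)/2\rfloor$ — already at $r=0$ the literal constant would give $2^{1}\binom{2}{1}=4$ instead of the correct count $2$.
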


Now we can finish the proof of Proposition~\ref{prop; smSym}.
\begin{coro}
\label{coro; Schubert}
\begin{align*}
\int_{G(d+1,2m+1)}\frac{c(S^\vee\otimes Q)c_{\binom{d+2}{2}}(Sym^2(S^\vee))}{c(Sym^2(S^\vee))} =& 2^{d+1}\cdot \binom{m}{m-d-1} \\
\int_{G(d+1,2m)}\frac{c(S^\vee\otimes Q)c_{\binom{d+2}{2}}(Sym^2(S^\vee))}{c(Sym^2(S^\vee))} =& 2^{d+1}\cdot \binom{m}{m-d-1}  \/.
\end{align*}
\end{coro}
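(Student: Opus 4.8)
The plan is to read the two Schubert integrals as Euler characteristics of Fano schemes of linear subspaces on a generic quadric, and then to quote the closed formula $F(d,2m-1)=F(d,2m)=2^{d+1}\binom{m}{m-d-1}$ proved in the Theorem above. Concretely, I would keep the set-up already in place: on $\mathbb{G}(d,n)=G(d+1,n+1)$ the quadratic form $f_X$ gives a section $s_X$ of the bundle $Sym^2(S^\vee)$, of rank $e:=\binom{d+2}{2}=\rk\,Sym^2(S^\vee)$, whose scheme of zeros is $Z^X_{d,n}$; for a generic quadric $X$ (in the range $2d\le n-1$) this section is regular and $Z_{d,n}:=Z^X_{d,n}$ is smooth of pure codimension $e$ in $\mathbb{G}(d,n)$, by \cite{Altman-Kleiman}.

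The heart of the argument is the standard computation of the Euler characteristic of such a regular zero locus. The section $s_X$ identifies the conormal bundle of $Z_{d,n}$ in $\mathbb{G}(d,n)$ with $(Sym^2 S^\vee)^\vee|_{Z_{d,n}}$, so the normal bundle exact sequence gives $c(T_{Z_{d,n}})=\bigl(c(T_{\mathbb{G}(d,n)})/c(Sym^2 S^\vee)\bigr)|_{Z_{d,n}}$. Combining this with the self-intersection formula $i_*[Z_{d,n}]=c_e(Sym^2 S^\vee)\cap[\mathbb{G}(d,n)]$ (see \cite{INT}), the projection formula, and the Poincar\'e--Hopf identity $\int_Z c(T_Z)\cap[Z]=\chi(Z)$ for smooth projective $Z$ (the Remark in \S\ref{S; preliminary}), one gets
\[
\chi(Z_{d,n})=\int_{\mathbb{G}(d,n)}\frac{c(T_{\mathbb{G}(d,n)})\,c_e(Sym^2 S^\vee)}{c(Sym^2 S^\vee)}\cap[\mathbb{G}(d,n)] \/.
\]
Since $T_{\mathbb{G}(d,n)}=T_{G(d+1,n+1)}=S^\vee\otimes Q$ and $e=\binom{d+2}{2}$, the right-hand side is exactly the integrand in the statement, evaluated over $G(d+1,n+1)$.

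To finish, I would set $n+1=2m+1$ (so $n=2m$) for the first identity, giving the integral over $G(d+1,2m+1)$ equal to $\chi(Z_{d,2m})=F(d,2m)=2^{d+1}\binom{m}{m-d-1}$; and $n+1=2m$ (so $n=2m-1$) for the second, giving the integral over $G(d+1,2m)$ equal to $\chi(Z_{d,2m-1})=F(d,2m-1)=2^{d+1}\binom{m}{m-d-1}$. Outside the range $2d\le n-1$ a generic (smooth) quadric of dimension $n-1$ contains no $\PP^d$, so $Z_{d,n}$ is empty, $\chi(Z_{d,n})=0$, and the same regular-zero-locus identity forces the integral to vanish; on the other side $\binom{m}{m-d-1}=0$ there, so the identity holds in all cases. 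Matching with the two integrals appearing in the proof of Proposition~\ref{prop; smSym} then needs only the trivial reindexing $G(2n-k,2n)\leftrightarrow G(d+1,2m)$, $G(2n-k+1,2n+1)\leftrightarrow G(d+1,2m+1)$ together with $Q\otimes S^\vee=S^\vee\otimes Q$.

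The main obstacle is bookkeeping rather than substance: one must apply the Euler-characteristic-of-a-regular-zero-locus formula with the correct bundle --- $Sym^2 S^\vee$, of rank $\binom{d+2}{2}$, not $Sym^2 S$ or $Sym^2 Q$ --- and keep the two parities $n+1=2m$ and $n+1=2m+1$ aligned with $F(d,2m-1)$ and $F(d,2m)$ respectively. The only genuine inputs beyond linear algebra are the smoothness of the generic Fano scheme, already cited from \cite{Altman-Kleiman}, and the recursion from the Theorem, both of which I would simply invoke.
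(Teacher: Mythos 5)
Your proof is correct, and on the main range $2d\le n-1$ it is exactly the paper's argument: realize the integrand as the pushforward of $c(T_{Z_{d,n}})$ for the smooth zero locus of the regular section $s_X$ of $Sym^2(S^\vee)$, conclude the integral equals $\chi(Z_{d,n})=F(d,n)$, and quote the recursion from the preceding theorem; your parity bookkeeping ($n=2m$ for $G(d+1,2m+1)$, $n=2m-1$ for $G(d+1,2m)$) and the reindexing back to the integrals of Proposition~\ref{prop; smSym} both check out. Where you genuinely diverge is the degenerate range $2d\ge n$, where both sides are $0$. The paper handles this purely by Schubert calculus: Lascoux's formula $c_{\binom{d+2}{2}}(Sym^2 S^\vee)=2^{d+1}\Delta_{[d+1]}(c(S^\vee))$, together with \cite[Lemma 14.5.1]{INT} and the fact that $s_i(S^\vee)=c_i(Q^\vee)=0$ for $i>n-d$ while $\rk Q=n-d<d+1$, forces the Schur determinant to vanish. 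You instead argue geometrically: a smooth quadric of dimension $n-1$ contains no $\PP^d$ once $2d\ge n$, so $s_X$ is nowhere vanishing; the one step you should make explicit (rather than saying the regular-zero-locus identity "forces" the vanishing) is that a nowhere-vanishing section of a rank-$e$ bundle implies $c_e=0$, which kills the factor $c_{\binom{d+2}{2}}(Sym^2 S^\vee)$ in the integrand. Both routes are valid; the paper's keeps the degenerate case inside characteristic-free Schubert calculus, while yours is shorter and reuses the geometry of quadrics, at the cost of invoking the classification of linear subspaces on a smooth quadric one more time.
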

\begin{proof}
When $2d\leq n-1$, recall that the scheme $Z^X_{d,n}$ is the zero locus of a section $s_X\colon G(d+1,n+1)\to Sym^2(S^\vee)$. For generic  $X$ it is smooth of codimension $\binom{k+2}{2}=\rk Sym^2(S^\vee)$. Thus we have
\[
c_{sm}^{Z^X_{d,n}}=\frac{c(S^\vee\otimes Q)c_{\binom{k+2}{2}}(Sym^2(S^\vee))}{c(Sym^2(S^\vee))} \in A_*(G(d+1,n+1)) \/.
\]
And the formula follows directly from the theorem. 

It remains to prove the following: for $2d\geq n$ we have
\[
\int_{G(d+1,n+1)}\frac{c(S^\vee\otimes Q)c_{\binom{d+2}{2}}(Sym^2(S^\vee))}{c(Sym^2(S^\vee))} = 0 \/.
\]
This follows from by standard Schubert calculus. 
First, from \cite{Lascoux} we have 
\[
c_{\binom{d+2}{2}}(Sym^2(S^\vee))=2^{d+1} \Delta_{[d+1]}(c(S^\vee)) \/;
\]
where $[d+1]$ denotes the partition $(d+1,d,d-1,\cdots ,1,0,\cdots ,0)$. 
Recall \cite[Lemma 14.5.1]{INT}, which says that if there exists some $0\leq r\leq k-1$ such that $s_i(E)=0$ for $i>r$ and $\lambda_{r+1}>0$, then
\[
\Delta_{(\lambda_1,\cdots ,\lambda_k)}(c(E))=0
\]
Notice that when $2d\geq n$, $\rk Q=n-d\leq d< d+1=\rk S$. Thus $s_i(S^\vee)=c_i(Q^\vee)=0$ for $i> n-d$. Take  $\lambda=[d+1]$, one sees that $\lambda_{n-d+1}>0$, and then 
\[
\Delta_{[d+1]}(c(S^\vee))=0\/.
\]
This completes the proof.
\end{proof}
\begin{rema}
Although we work with $\CC$ in this section, notice that Corollary~\ref{coro; Schubert} is purely about binomial identities, thus it naturally holds for any characteristic $0$ field.
\end{rema}

\begin{rema}
Despite the similarity in forms, the author doesn't know a good direct proof  to the Schubert identity appeared in the computation in the skew-symmetric case:
\begin{align*}
\int_{G(2k,2n)} \frac{c(S^\vee\otimes Q)c_{\binom{2n-2k}{2}}(\wedge^2 Q)}{c(\wedge^2 Q)} \cap [G(2k,2n)] 
=&  \sum_{r=k}^{n} (-1)^{n-k} \binom{2r}{2k}\binom{n}{r} 
\\
\int_{G(2k+1,2n+1)} \frac{c(S^\vee\otimes Q)c_{\binom{2n-2k}{2}}(\wedge^2 Q)}{c(\wedge^2 Q)} \cap [G(2k+1,2n+1)] 
=& \sum_{r=k}^{n} (-1)^{n-k} \binom{2r+1}{2k+1}\binom{n}{r} 
\end{align*}
It should be interesting to find out what enumerative problems correspond to them.
\end{rema}
\begin{rema}
In fact, the author does not know a direct Schubert calculus proof of the Schubert integration equalities for the skew-symmetric and the symmetric case. Based on the computation for small Grassmannian we believe that the clean forms come from the symmetry and certain vanishing property from Schubert calculus. We will discuss more about the formulas in another paper. 
\end{rema}

\bibliographystyle{plain}
\bibliography{ref}

\end{document}